\def\.{\cdot}
\def\d{{\delta}}
\def\vs{\vskip .6cm}
\def\beq{\begin{equation}}
\def\eeq{\end{equation}}
\def\bea{\begin{eqnarray*}}
\def\eea{\end{eqnarray*}}
\def\beaa{\begin{eqnarray}}
\def\eeaa{\end{eqnarray}}
\def\ba{\begin{array}}
\def\ea{\end{array}}
\def\f{\varphi}
\def\L{\Lambda}
\def \RM{\mathbb{R}}
\def \ZM{\mathbb{Z}}
\def \SM{\mathbb{S}}
\def\Ric{\mathrm{Ric}}
\def\id{\mathrm{id}}
\def\be{\begin{equation}}
\def\ee{\end{equation}}
\def\tr{\mathrm{tr}}
\def\Aut{\mathrm{Aut }}
\def\Hom{\mathrm{Hom}}
\def\Sym{\mathrm{Sym}}
\def\so{\mathfrak{so}}
\def\SO{\mathrm{SO}}
\def\OO{\mathrm{O}}
\def\End{\mathrm{End}}
\def\Sym{\mathrm{Sym}}
\def\scal{\mathrm{scal}}
\def\Id{\mathrm{id}}
\def\T{\mathrm{\,T}}
\def\Rh{\mathring{R}}
\def\lr{\,\lrcorner\,}
\def\Cas{\mathrm{Cas} }
\def\dd{\mathrm{d}}
\def\LL{\mathrm{L}}
\def\grad{\mathrm{grad}}
\newtheorem{epr}{Proposition}[section]
\newtheorem{ath}[epr]{Theorem}
\newtheorem{elem}[epr]{Lemma}
\newtheorem{ecor}[epr]{Corollary}
\theoremstyle{definition}
\newtheorem{ede}[epr]{Definition}
\newtheorem{ere}[epr]{Remark}
\newtheorem{exe}[epr]{Example}
\title[Conformal Killing tensors]{Killing and Conformal Killing tensors}
\address{Konstantin Heil\\
Institut f\"ur Geometrie und Topologie \\
Fachbereich Mathematik\\
Universit{\"a}t Stuttgart\\
Pfaffenwaldring 57 \\
70569 Stuttgart, Germany
}
\email{konstantin.heil@mathematik.uni-stuttgart.de}
\author{Konstantin Heil, Andrei Moroianu, Uwe Semmelmann}
\address{Andrei Moroianu \\ Laboratoire de Math\'ematiques de Versailles, UVSQ, CNRS, Universit\'e Paris-Saclay, 78035 Versailles, France }
\email{andrei.moroianu@math.cnrs.fr}
\address{Uwe Semmelmann\\
Institut f\"ur Geometrie und Topologie \\
Fachbereich Mathematik\\
Universit{\"a}t Stuttgart\\
Pfaffenwaldring 57 \\
70569 Stuttgart, Germany
}
\email{uwe.semmelmann@mathematik.uni-stuttgart.de}
\date{\today}
\begin{document}

\begin{abstract}
We introduce an appropriate formalism in order to study conformal Killing (symmetric) tensors on Riemannian manifolds. 
We reprove in a simple way some known results in the field and obtain several new results, like the classification of 
conformal Killing $2$-tensors on Riemannian products of compact manifolds, Weitzenb\"ock formulas leading to non-existence 
results, and construct various examples of manifolds with conformal Killing tensors.
\vs
\noindent 2010 {\it Mathematics Subject Classification}: Primary: {53C25, 53C27, 53C40}
\smallskip

\noindent {\it Keywords}: {Killing tensors, conformal Killing tensors, St\"ackel tensors.} 
\end{abstract}

\maketitle

\section{Introduction}

Killing $p$-tensors are symmetric $p$-tensors with vanishing symmetrized covariant derivative. This is a natural generalization
of the Killing vector field equation for $p=1$. Since many years Killing tensors, and more generally conformal Killing tensors,  were intensively studied in the physics 
literature, e.g. in  \cite{penrose} and \cite{wood}. The original motivation came from the fact that symmetric Killing tensors define (polynomial) first integrals of the equations of motion, i.e. functions which are constant on geodesics. Conformal Killing tensors still define first integrals for null geodesics.
Killing  $2$-tensors also appeared in the analysis of the stability of generalized black holes in 
$D=11$ supergravity, e.g. in \cite{gibbons03}  and \cite{page}. It turns out that trace-free Killing $2$-tensors (also called St\"ackel tensors) precisely correspond to the limiting case
of a lower bound for the spectrum of the Lichnerowicz Laplacian on symmetric $2$-tensors. More recently,
Killing and conformal Killing tensors appeared in several other areas of mathematics, e.g.
in connection with geometric inverse problems, integrable systems and Einstein-Weyl geometry, cf. \cite{dairbekov},  \cite{fox1}, \cite{fox2}, \cite{salo2}, \cite{jelonek99}, \cite{salo1}, \cite{schoebel1}.

Any parallel tensor is in particular a Killing tensor.
The simplest non-parallel examples of Killing tensors can be constructed as symmetric products of Killing vector fields. For the standard sphere $\SM^n$
there is a direct correspondence between Killing tensors and algebraic curvature tensors on $\RM^{n+1}$.
Other interesting examples are obtained as Ricci tensors of certain Riemannian manifolds, e.g. of natural reductive spaces.

The defining equation of trace-free conformal Killing tensors has the important property to be of finite type (or strongly elliptic). This leads to an explicit upper
bound of the dimension of the space of conformal Killing tensors. In this respect, conformal Killing tensors are very similar to so-called conformal Killing forms,
which were studied by the authors in several articles, e.g. \cite{andrei15}, \cite{andreiuwe1} and  \cite{uwe}.  Moreover there is an explicit construction of Killing tensors starting from Killing forms, cf. Section \ref{4.3} below.

The existing literature on symmetric Killing tensors is huge, especially coming from theoretical physics. One of the main obstacles in reading it is the old-fashioned formalism used in most articles in the subject.

In this article we introduce conformal Killing tensors in a modern, coordinate-free formalism. We use this formalism in order to reprove in a simpler way some known results, like Theorem \ref{nodal} saying that the nodal set of a conformal Killing tensor has at least codimension 2, or Proposition \ref{ptf} showing the non-existence of trace-free conformal Killing tensors on compact manifolds of negative sectional curvature. In addition we give a unified treatment of some subclasses of conformal Killing tensors, e.g. special conformal Killing tensors.

We obtain several new results, like the classification of St\"ackel 2-tensors with at most two eigenvalues, or the description of conformal Killing 2-tensors on Riemannian products of compact manifolds (which turn out to be determined by Killing 2-tensors and Killing vector fields on the factors, cf. Theorem \ref{product}). 
We also prove a general Weitzenb\"ock formula (Proposition \ref{wbf}) leading to non-existence results on certain compact manifolds.

{\sc Acknowledgments.} This work was supported by the Procope Project No. 32977YJ. We are grateful to Mikko Salo who discovered an error in the proof of Proposition \ref{ptf}, and to Gregor Weingart who helped us to correct this error. We also thank the anonymous referee for having pointed out an error in the previous version of Theorem \ref{product} and for several useful suggestions.

\section{Preliminaries}

Let $(V, g)$ be a Euclidean vector space of dimension $n$. We denote with
$\Sym^p V\subset V^{\otimes p}$ the $p$-fold symmetric tensor product of $V$. Elements of $\Sym^p V$
are symmetrized tensor products
$$
v_1 \cdot \ldots \cdot   v_p := \sum_{\sigma \in S_p} \, v_{\sigma(1)} \otimes \ldots \otimes v_{\sigma(p)} \ ,
$$
where $v_1, \ldots, v_p$ are vectors in $V$. In particular we have
$v\cdot u = v\otimes u + u \otimes v$ for $u, v \in V$. 
Using the metric $g$, one can identify $V$ with $V^*$. Under this identification, $g\in\Sym^2V^*\simeq\Sym^2V$ can be written as $g=\frac12  \sum e_i \cdot e_i$, for any
orthonormal basis $\{ e_i\}$.

The direct sum $\Sym V:=\bigoplus_{p\ge 0} \Sym^p V$ is endowed with a commutative product making $\Sym V$ into a $\ZM$-graded commutative algebra.
The scalar product $g$ induces a scalar product, also denoted by $g$, on $\Sym^p V$ defined by 
$$
g(v_1 \cdot \ldots \cdot v_p,w_1 \cdot \ldots \cdot w_p) \;=\; \sum_{\sigma \in S_p}
\, g(v_1, w_{\sigma(1)}) \cdot \ldots  \cdot g(v_p, w_{\sigma(p)})\ .
$$
With respect to this scalar product, every element $K$ of $\Sym^p V$ can be identified with a symmetric $p$-linear map (i.e. a polynomial of degree $p$) on $V$ by the formula 
$$K(v_1,\ldots,v_p)=g(K,v_1\cdot\ldots\cdot v_p)\ .
$$
For every $v\in V$, the metric adjoint of the linear map
$
v\cdot: \Sym^p V \rightarrow \Sym^{p+1} V, \;  K \mapsto v \cdot K
$
is the contraction
$
v\lrcorner : \Sym^{p+1} V \rightarrow \Sym^{p} V, \;  K \mapsto v \lrcorner \, K
$, defined by $(v \lrcorner \, K) (v_1, \ldots , v_{p-1}) = K(v, v_1, \ldots, v_{p-1})$.
In particular we have $v \lrcorner \, u^p = p g(v, u) u ^{p-1}, \forall\  v, u \in V$.

We introduce the linear map $\, \deg : \Sym V \rightarrow \Sym V$, defined by
$\deg (K)  = p \,K $ for $ K \in \Sym^p V$. Then we have\;
$
\sum e_i \cdot e_i \lrcorner \, K = \deg(K)
$,
where $\{e_i\}$ as usual denotes an orthonormal frame. Note that if $K\in \Sym^p T$
is considered as a polynomial of degree $p$ then $v \lrcorner K$ corresponds to
the directional derivative $\partial_v K$ and the last formula is nothing else than
the well-known Euler theorem on homogeneous functions.

Contraction and multiplication with the metric $g$ defines two  additional  linear maps:
$$
\L : \Sym^p V \rightarrow \Sym^{p-2} V , \quad K \mapsto  \sum e_i \lrcorner \,  e_i \lrcorner \,  K 
$$
and
$$
\LL  : \Sym^{p-2} V \rightarrow \Sym^{p} V , \quad K \mapsto  \sum e_i \cdot e_i \cdot K \ ,
$$
which are adjoint to each other.
Note that $\LL (1) = 2 g$ and $\Lambda K = \tr(K)$ for every $K\in\Sym^2 V$.
It is straightforward to check the following algebraic commutator relations
\beq\label{commu}
[\, \Lambda, \, \LL  \,] \;=\; 2n\,  \Id \;+\;4 \deg,\quad [\, \deg, \LL  \,] = 2\, \LL , \quad [\,\deg, \L\,] = -\,2 \,\Lambda  \ ,
\eeq
and for every $v\in V$:
\beq \label{commu2}
[\, \Lambda, \, v \,\cdot \, ] \;=\; 2\, v \,  \lrcorner  \, , \quad
[\, v \lrcorner \,,\, \LL  \,  \,] \;=\; 2\, v \cdot\, ,\quad
[\,\L , \, v \lrcorner \, \,] \;=\; 0 \;=\; [\, \LL , \, v \cdot \,] \ . 
\eeq

For $V = \RM^n$, the standard $\OO(n)$-representation induces a reducible
$\OO(n)$-repre\-sentation on $\Sym^p V$. We denote by
$
\Sym^p_0 V := \ker( \L : \Sym^p V \rightarrow \Sym^{p-2} V)
$
the space of trace-free symmetric $p$-tensors. 

It is well known that $\Sym^p_0 \RM^n$ is an irreducible $\OO(n)$-representation and
we have the following decomposition into irreducible summands
$$
\Sym^p V  \;\cong\;  \Sym^p_0 V \;\oplus\; \Sym^{p-2}_0 V \;\oplus\;  \ldots \ ,
$$
where the last summand
in the decomposition is $\RM$ for $p$ even and $V$ for $p$ odd.
The summands 
$\Sym^{p- 2i}_0 V$ are embedded into $\Sym^pV$ via the map $\LL ^i$.  Corresponding to the
decomposition above any $K \in \Sym^p V$ can be decomposed as
$$
K \;=\;  K_0 \;+\;  \LL K_1 \;+\;  \LL ^2K_2 \;+ \; \ldots
$$
with $K_i \in \Sym^{p-2i}_0 V$, i.e. $\Lambda K_i = 0$. 
We will call this decomposition the {\it standard decomposition} of $K$.
In the following, the
subscript $0$ always  denotes the projection of an element from $\Sym^p V$ onto its
component in $\Sym^p_0 V$. Note that for any $v \in V$ and $K \in \Sym^p_0 V$ 
we have the following projection formula
\beq \label{projection}
(v \cdot K )_0 \;=\; v \cdot K \;-\; \tfrac{1}{n + 2 (p-1)}\, \LL  \,  (v \lrcorner \, K)\ .
\eeq
Indeed, using the commutator relation \eqref{commu} we have
$
\L ( \LL  \,  (v \lrcorner \,  K)) = (2n + 4(p-1)) \,  (v \lrcorner \,  K) 
$,
since $\Lambda $ commutes  with $v \lrcorner \, $ and $\Lambda K=0$. Moreover $\Lambda (v \cdot K) = 2\, v \lrcorner \, K$. Thus the right-hand side of 
\eqref{projection} is in the kernel of $\L$, i.e. it computes the projection $(v \cdot K)_0$. 

Recall the classical decomposition into irreducible  $\OO(n)$ representations 
\beq\label{deco}
V \otimes \Sym^p_0 V \;\cong\; \Sym^{p+1}_0V \;\oplus\; \Sym^{p-1}_0V \;\oplus\; \Sym^{p,1} V \ ,
\eeq
where $V = \RM^n$ is the standard $\OO(n)$-representation of highest weight
$(1,0,\ldots , 0)$, $\Sym^p_0 V$  
is the irreducible representation of highest weight $(p, 0, \ldots, 0)$ and 
$\Sym^{p,1} V$ is the irreducible representation of highest weight
$(p,1,0, \ldots, 0)$. We note that  $\Sym^{p+1}_0V$ is the so-called Cartan summand.
Its highest weight is the sum of the highest weights of $V$ and $\Sym^p_0V$.

Next we want to describe projections and embeddings for the first two summands.
The projection 
$
\pi_1 : V \otimes \Sym^p_0V \rightarrow \Sym^{p+1}_0 V
$
onto the first summand is defined as
\beq\label{pi1}
\pi_1(v \otimes K) \;:=\; (v \cdot K)_0 \;\stackrel{ \eqref{projection}}{=}\; v \cdot K \;-\; \tfrac{1}{n + 2(p - 1)}\, \LL \,  (v \lrcorner \,  K) \ .
\eeq
The adjoint map
$
\pi_1^* : \Sym^{p+1}_0 V \rightarrow V \otimes \Sym^p_0 V
$
is easily computed to be 
$
\pi^*_1(K) = \sum e_i \otimes (e_i \lrcorner \, K)
$.
Note that for any vector $v\in V$ the symmetric tensor
$v\lrcorner \, K$ is again trace-free, because $\, v \lrcorner \,$ commutes with $\L$.
Since $\pi_1 \, \pi^*_1 = (p+1) \, \Id$ on $\Sym^{p+1}_0V$, we conclude that
\begin{equation}\label{pi11}
p_1 \;:=\; \tfrac{1}{p+1} \, \pi^*_1 \, \pi_1 : V \otimes \Sym^p_0 V \;\rightarrow \;
\Sym^{p+1}_0 V \;\subset \; V \otimes \Sym^p_0 V
\end{equation}
 is the projection onto the irreducible summand of $V \otimes \Sym^p_0 V$ isomorphic to
 $\Sym^{p+1}_0V $.

Similarly the projection 
$
\pi_2 : V \otimes \Sym^p_0 V \rightarrow \Sym^{p-1}_0 V
$
onto the second summand in the decomposition \eqref{deco} is given by the contraction map
$
\pi_2 ( v \otimes K)  := v \lrcorner \, K
$.
In this case the adjoint map 
$
\pi_2^*: \Sym^{p-1}_0 V \rightarrow V \otimes \Sym^p_0 V
$
is computed to be
$$
\pi^*_2(K) \;=\; \sum e_i \otimes (e_i \cdot K)_0
\;=\; 
\sum e_i \otimes (e_i \cdot P - \tfrac{1}{n+2(p-2)} \, \LL  \, (e_i \lrcorner \, P)) \ . 
$$
It follows that
$\;
\pi_2 \, \pi^*_2 = (n+p-1) \, \id \;-\; \tfrac{2(p-2)}{n + 2(p-2)} \, \id
=
\tfrac{(n+2p-2)(n+p-3)}{n+2p-4}\,\id \ .
$
Thus the projection onto the irreducible summand in
$V \otimes \Sym^p_0 V$ isomorphic to $\Sym^{p-1}_0 V$  is given by
\begin{equation}\label{pi2}
p_2 \;:=\; \tfrac{n+2p-4}{(n+2p-2)(n+p-3)} \, \pi_2^* \, \pi_2 
: V \otimes \Sym^p_0 V \;\rightarrow \; \Sym^{p-1}_0 V \;\subset \; V \otimes \Sym^p_0 V \ .
\end{equation}
The projection $p_3$ onto the third irreducible summand in  $V \otimes \Sym^p_0 V$ is obviously given by $p_3 = \id - p_1 - p_2$ .

\medskip

Let $(M^n, g)$ be a Riemannian manifold with Levi-Civita connection $\nabla$. All the algebraic considerations above extend
to vector bundles over $M$, e.g. the $\OO(n)$-representation $\Sym^p V$
defines the real vector bundle $\Sym^p \T M$. The $\OO(n)$-equivariant maps
$\LL $ and $\L$ define bundle maps between the corresponding bundles. The same
is true for the symmetric product and the contraction $\iota$, as well as for the
maps $\pi_1, \pi_2$ and their adjoints, and the projection maps $p_1, p_2, p_3$. We
will use the same notation for the bundle maps on $M$.

Next we will define first order differential operators on sections of $\Sym^p \T M$. We have
$$
\dd : \Gamma(\Sym^p \T M) \rightarrow \Gamma( \Sym^{p+1}\T M), \quad K \mapsto \sum e_i \cdot \nabla_{e_i}K \ ,
$$
where $\{e_i\}$ denotes from now on a local orthonormal frame. The formal adjoint of 
$\dd$ is the divergence operator  $\d$ defined by
$$
\d : \Gamma(\Sym^{p+1} \T M) \rightarrow \Gamma(\Sym^{p}\T M), \quad K \mapsto -  \sum e_i \lrcorner\, \nabla_{e_i}K \ ,
$$
An immediate consequence of  the definition is 
\begin{elem} \label{derivation}
The operator $\dd$ acts as a derivation on the algebra of symmetric tensors, i.e.
for any $A \in \Gamma(\Sym^p \T M)$ and $B \in  \Gamma(\Sym^q \T M)$
the following equation holds
$$
\dd ( A \cdot B) \;=\; (\dd A) \cdot B \,+\, A \cdot (\dd B) \ .
$$
\end{elem}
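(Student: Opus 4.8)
The plan is to deduce the Leibniz rule for $\dd$ from the fact that the Levi-Civita connection $\nabla$ is a derivation of the algebra of symmetric tensors, together with the commutativity and associativity of the symmetric product. Everything reduces to a short formal computation, so I do not expect any real obstacle; the only point that deserves a word of care is that the defining expression $\sum_i e_i\cdot\nabla_{e_i}K$ of $\dd$ is independent of the chosen orthonormal frame, which is what legitimizes rearranging terms inside the frame sum.

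First I would record the pointwise Leibniz rule
$$\nabla_X(A\cdot B)\;=\;(\nabla_X A)\cdot B\;+\;A\cdot(\nabla_X B)$$
for every vector field $X$ and all $A\in\Gamma(\Sym^p\T M)$, $B\in\Gamma(\Sym^q\T M)$. This holds because the symmetric product is obtained from the tensor product by applying the ($\nabla$-parallel) symmetrization operator, and $\nabla$ is a derivation of the tensor algebra; the normalization constants entering the definition of the symmetric product do not affect this identity.

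Then, fixing a local orthonormal frame $\{e_i\}$ and combining the definition of $\dd$ with the Leibniz rule, I would write
$$\dd(A\cdot B)\;=\;\sum_i e_i\cdot\nabla_{e_i}(A\cdot B)\;=\;\sum_i e_i\cdot\big((\nabla_{e_i}A)\cdot B\big)\;+\;\sum_i e_i\cdot\big(A\cdot(\nabla_{e_i}B)\big).$$
By associativity and commutativity of the symmetric product one has $e_i\cdot\big((\nabla_{e_i}A)\cdot B\big)=(e_i\cdot\nabla_{e_i}A)\cdot B$ and $e_i\cdot\big(A\cdot(\nabla_{e_i}B)\big)=A\cdot(e_i\cdot\nabla_{e_i}B)$. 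Summing over $i$ and pulling the frame-independent factors $B$, resp. $A$, out of the corresponding sums yields
$$\dd(A\cdot B)\;=\;\Big(\sum_i e_i\cdot\nabla_{e_i}A\Big)\cdot B\;+\;A\cdot\Big(\sum_i e_i\cdot\nabla_{e_i}B\Big)\;=\;(\dd A)\cdot B\;+\;A\cdot(\dd B),$$
which is the claimed identity. Since $\sum_i e_i\cdot\nabla_{e_i}K$ is the composition of $\nabla K\in\Gamma(\T M\otimes\Sym^p\T M)$ with the intrinsic multiplication map $\T M\otimes\Sym^p\T M\to\Sym^{p+1}\T M$, each of the expressions above is frame-independent, so the manipulations are justified and the argument is complete.
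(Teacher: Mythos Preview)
Your proof is correct and is precisely the computation the paper has in mind: in the text the lemma is stated as ``an immediate consequence of the definition'' with no further argument, and what you have written is exactly that immediate consequence spelled out --- Leibniz rule for $\nabla$, definition of $\dd$, and associativity/commutativity of the symmetric product.
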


An easy calculation proves that the operators $\dd$ and $\d$ satisfy the  commutator relations:
\beq\label{commu3}
[ \, \Lambda, \,\d\,]  \;=\; 0 \;=\;  [\,\LL ,\, \dd\, ], \quad [\, \Lambda, \,\dd\,] \; =\; -2 \d, \quad [\, \LL ,\, \d\, ] \;=\;  2 \,\dd\ .
\eeq

\medskip

\begin{elem}\label{constants}
Let  $K = K_0 +\LL   K_1 + \ldots$ be the standard decomposition of a section of  $ \Sym^p\T M $, where $K_i \in \Sym^{p-2i}_0\T M$. Then there exist real constants $a_i$ such that
$$
\dd K_i - a_i \LL  \d K_i \in \Sym^{p-2i+1}_0\T M \ .
$$
The constants are given explicitly  by  $a_i : = -\frac{1}{n+2(p-2i-1)} $. In particular, if  $K$ is  a section of $\Sym^p_0 \T M$, it holds that
\beq\label{dprojection}
(\dd K)_0 \;=\; \dd K \;+\; \tfrac{1}{n+2p-2}\,\LL  \, \d K \ .
\eeq
\end{elem}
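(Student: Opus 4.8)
The plan is to pin down the constant $a_i$ by the single requirement that $\dd K_i-a_i\LL\d K_i$ lie in the kernel of $\L$, and then to recover \eqref{dprojection} as the case $i=0$.

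First I would record that $\d$ preserves trace-freeness: since $K_i\in\Sym^{p-2i}_0\T M$ means $\L K_i=0$, the relation $[\,\L,\d\,]=0$ from \eqref{commu3} gives $\L\d K_i=\d\L K_i=0$. Then I would apply $\L$ to $\dd K_i-a_i\LL\d K_i$ and evaluate the two terms with the commutator relations already at hand. For the first, $[\,\L,\dd\,]=-2\d$ together with $\L K_i=0$ yields $\L\dd K_i=-2\d K_i$. For the second, since $\d K_i$ is trace-free and lies in $\Sym^{p-2i-1}\T M$, the relation $[\,\L,\LL\,]=2n\,\Id+4\deg$ gives $\L\LL\d K_i=(2n+4(p-2i-1))\,\d K_i$. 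Combining the two,
$$\L\big(\dd K_i-a_i\LL\d K_i\big)=\Big(-2-a_i\,(2n+4(p-2i-1))\Big)\,\d K_i\ ,$$
which vanishes exactly when $a_i=-\tfrac{1}{n+2(p-2i-1)}$, as claimed.

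For the ``in particular'' statement one takes $K\in\Sym^p_0\T M$, i.e. $K=K_0$ and $i=0$; the previous paragraph already shows that $\dd K+\tfrac{1}{n+2p-2}\LL\d K$ is trace-free. To conclude that this tensor equals $(\dd K)_0$, I would observe that its difference with $(\dd K)_0$ lies on the one hand in the image of $\LL$ (being a combination of $\LL\d K$ and $\dd K-(\dd K)_0$) and on the other hand in $\Sym^{p+1}_0\T M$ (being a difference of trace-free tensors); since these two subbundles of $\Sym^{p+1}\T M$ meet only in $0$ by the standard decomposition, the difference vanishes.

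Everything here is purely formal once the commutator identities \eqref{commu}--\eqref{commu3} are available, so I expect no genuine difficulty in the first part. The one point that deserves care — and the step I would spell out most carefully — is the last one: checking that the trace-free tensor produced in the case $i=0$ is really the $\Sym^{p+1}_0$-component of $\dd K$, and not merely some trace-free tensor differing from it by an element of the complementary summands. This is immediate from the directness of the standard decomposition, but it is worth making explicit.
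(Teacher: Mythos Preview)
Your argument is correct and follows exactly the paper's approach: apply $\Lambda$, use the commutator relations \eqref{commu} and \eqref{commu3} to reduce the trace-freeness condition to $-2-a_i(2n+4(p-2i-1))=0$, and read off $a_i$. The only difference is that you spell out the final identification---that the trace-free tensor produced for $i=0$ really is $(\dd K)_0$ via the directness of the standard decomposition---whereas the paper leaves this implicit.
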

\proof 
We write $K = \sum_{i\ge 0} \, \LL ^i \, K_i$, where $K_i$ is a section of  $\Sym^{p-2i}_0\T M$.
Then $\dd K_i - a_i \LL  \d K_i $ is a section of  $\Sym^{p-2i+1}_0\T M$ if and only if
$$
0 \;=\; \Lambda (\dd K_i - a_i \LL  \d K_i ) \;=\; - 2 \,\delta \, K_i \,-\, a_i\,(2n + 4(p-2i -1))\, \delta \, K_i \ .
$$
Thus the constants $a_i$ are as stated above. In particular we have  for $i=0$ that the expression $\dd K_0 + \frac{1}{n+2p-2}\LL \delta K_0$ is
trace-free. This proves the last statement.
\qed

\medskip

The operators $\dd$ and $\d$ can be considered as components of the covariant
derivative $\nabla$ acting on sections of $\Sym^p \T M$. To make this more precise
we first note that
\be\label{pi12}
\pi_1(\nabla K) \;=\; (\dd K)_0
\qquad \mbox{and} \qquad
\pi_2(\nabla K) \;=\; -\, \d K \ ,
\ee
which follows from $\nabla K = \sum e_i \otimes \nabla_{e_i} K$ and the definitions above.

\medskip

Let $K$ be a section of $\Sym^p_0\T M$. Then $\nabla K$ is a  section of $\T M \otimes \Sym^p_0 \T M$
and we may decompose $\nabla K$ corresponding to \eqref{deco}, i.e.
$
\nabla K = P_1(K)  + P_2(K) + P_3(K)
$,
where we use the notation $P_i(K) := p_i(\nabla K), i=1,2,3$.
Substituting the definition of the operators $P_i$ and applying the resulting equation to a tangent vector $X$
we obtain
\bea
\nabla_X K &=& \tfrac{1}{p+1}\, \pi_1^* (\dd K)_0(X) \;-\;  \tfrac{n+2p-4}{(n+2p-2)(n+p-3)} \,\pi_2^*(\d K)(X) \;+\;P_3(K) (X) \\[1ex]
&=&
\tfrac{1}{p+1} \, X \lrcorner \, (\dd K)_0 \;-\; \tfrac{n+2p-4}{(n+2p-2)(n+p-3)} \, (X\cdot \d K)_0
\;+\; P_3(K)(X)
\eea

Using \eqref{projection} and \eqref{dprojection}
we rewrite the formula for $\nabla_XK$ in terms of $\dd K$ and $\d K$ and obtain
\bea
\nabla_XK &=&
\tfrac{1}{p+1}\, X \lrcorner \, \dd K \;+\; 
\left(  \tfrac{1}{(n+2p-2)(p+1)} +\tfrac{1}{(n+2p-2)(n+p-3)}  \right) \, \LL \, (X \lrcorner \, \d K)
\\[.5ex]
&&
\phantom{xxxxxxxxxxxxxxx}
\;+\; 
\left(  \tfrac{2}{(n+2p-2)(p+1)} - \tfrac{n+2p-4}{(n+2p-2)(n+p-3)} \right)\, X \cdot \d  K
\;+\; P_3(K)(X)\\[1.5ex]
&=&
\tfrac{1}{p+1}\, X \lrcorner \, \dd K \;+\; 
\tfrac{1}{(p+1)(n+p-3)}\, \LL \, (X \lrcorner \, \d K)
\;-\;
\tfrac{p-1}{(p+1)(n+p-3)}\,X \cdot \d  K
\;+\; P_3(K)(X) \ .\\[1ex]
\eea
Here we  applied the commutator formula 
$X \lrcorner \, \LL  K = \LL  (X \lrcorner \, K) + 2 X \cdot K$. For later use we still note the
formulas
$$
P_1(K)(X) \;=\; \tfrac{1}{p+1} \, X \lrcorner \, (\dd K)_0 
\qquad \mbox{and}\qquad
P_2(K)(X) \;=\; -\,\tfrac{n+2p-4}{(n+2p-2)(n+p-3)} \, (X\cdot \d K)_0 \ .
$$

\medskip

At the end of this section we want to clarify the relations between $\dd , \d$ and $P_1, P_2$.
For convenience we introduce the notation $\dd_0K := (\dd K)_0$.  The relation between
$\dd_0$ and $\dd $ is given in \eqref{dprojection}. An easy calculation shows that $\d^\ast \;=\; \dd_0\ .$

\begin{elem}
On sections of $\Sym^p_0 \T M$ the following equations hold:
$$
\dd_0^\ast \, \dd^{\phantom \ast}_0 \;=\; (p+1)\, P_1^*  P^{\phantom *}_1
\qquad \mbox{and} \qquad
 \d^* \, \d \;=\;\tfrac{(n+2p-2)(n+p-3)}{n+2p-4} \, P^*_2 P^{\phantom *}_2 \ .
 $$ 
\end{elem}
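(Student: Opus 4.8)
The plan is to factor each of the operators $\dd_0$ and $\d$ on trace-free tensors through the corresponding component $P_1$, resp. $P_2$, of the covariant derivative, and then to use that the maps $\pi_1,\pi_2$ are, up to a scalar, partial isometries. For the first identity I start from the definition \eqref{pi11} of $p_1$ and the first equation in \eqref{pi12}: for $K\in\Gamma(\Sym^p_0\T M)$,
$$P_1(K)\;=\;p_1(\nabla K)\;=\;\tfrac{1}{p+1}\,\pi_1^\ast\pi_1(\nabla K)\;=\;\tfrac{1}{p+1}\,\pi_1^\ast(\dd_0 K)\ ,$$
so that $P_1=\tfrac{1}{p+1}\,\pi_1^\ast\,\dd_0$ as operators. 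Taking fiberwise adjoints and using $(\pi_1^\ast)^\ast=\pi_1$ together with $\dd_0^\ast=\d$ (the adjoint of the relation $\d^\ast=\dd_0$ recalled just above the statement), I obtain $P_1^\ast=\tfrac{1}{p+1}\,\d\,\pi_1$, and hence
$$P_1^\ast P_1\;=\;\tfrac{1}{(p+1)^2}\,\d\,\pi_1\pi_1^\ast\,\dd_0\;=\;\tfrac{1}{p+1}\,\d\,\dd_0\;=\;\tfrac{1}{p+1}\,\dd_0^\ast\dd_0\ ,$$
where the middle equality uses $\pi_1\pi_1^\ast=(p+1)\Id$ on $\Sym^{p+1}_0\T M$, established just before \eqref{pi11}. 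This is the first claimed equation.

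For the second identity I repeat the argument with $\pi_2$ and the second equation in \eqref{pi12}. From \eqref{pi2},
$$P_2(K)\;=\;p_2(\nabla K)\;=\;\tfrac{n+2p-4}{(n+2p-2)(n+p-3)}\,\pi_2^\ast\pi_2(\nabla K)\;=\;-\,c\,\pi_2^\ast(\d K)\ ,$$
with $c:=\tfrac{n+2p-4}{(n+2p-2)(n+p-3)}$, i.e. $P_2=-c\,\pi_2^\ast\,\d$. Dualizing gives $P_2^\ast=-c\,\d^\ast\pi_2=-c\,\dd_0\,\pi_2$, so that
$$P_2^\ast P_2\;=\;c^2\,\dd_0\,\pi_2\pi_2^\ast\,\d\;=\;c\,\dd_0\,\d\;=\;c\,\d^\ast\d\ ,$$
where I used $\pi_2\pi_2^\ast=\tfrac1c\,\Id$ on $\Sym^{p-1}_0\T M$ (computed just before \eqref{pi2}) and again $\d^\ast=\dd_0$. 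Dividing by $c$ yields $\d^\ast\d=\tfrac{(n+2p-2)(n+p-3)}{n+2p-4}\,P_2^\ast P_2$, which is the second claimed equation.

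Since everything is formal, there is no genuine obstacle here; the only point requiring a little care is that all the adjoints above are taken fiberwise (equivalently, with respect to the $L^2$-pairing on the relevant trace-free bundle), so that the scalar identities $\pi_i\pi_i^\ast=(\mathrm{const})\,\Id$ from the preliminary section apply verbatim to the induced bundle maps, and no curvature terms or $\nabla^\ast\nabla$ ever enter. One should also recall that the pointwise projection $(\cdot)_0$ is self-adjoint, which is exactly what makes $\pi_1(\nabla K)=(\dd K)_0=\dd_0 K$ consistent with $\dd_0^\ast=\d$; this is already contained in the identity $\d^\ast=\dd_0$ quoted before the statement and needs no separate verification.
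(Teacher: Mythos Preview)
Your proof is correct and is essentially the same argument as the paper's, just organized slightly differently. The paper packages both identities into a single general formula $\dd_i^\ast\dd_i=c_i P_i^\ast P_i$, proved via $\dd_i^\ast\dd_i=\nabla^\ast\pi_i^\ast\pi_i\nabla=c_i\nabla^\ast p_i\nabla=c_i\nabla^\ast p_i^\ast p_i\nabla=c_iP_i^\ast P_i$, whereas you factor $P_i$ through $\dd_0$ (resp.\ $\d$) first and then insert $\pi_i\pi_i^\ast=c_i\,\Id$; both routes hinge on exactly the same algebraic identity and the relation $\d^\ast=\dd_0$.
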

\proof
Let $EM$ be a vector bundle associated to the frame bundle via a $\SO(n)$
representation E. The Levi-Civita connection induces a covariant derivative $\nabla$ acting on sections of $EM$.
If $\T $ denotes the tangent representation, defining the tangent bundle, we
have a decomposition into irreducible summands:
$
E \otimes \T  = E_1 \oplus \ldots \oplus E_N
$.
Here the spaces $E_i$ are subspaces of  $E\otimes \T $ but often they appear also in
other realizations, like the spaces $\Sym^{p+1}_0\T$ and $\Sym^{p-1}_0 \T $ in the
decomposition of $\Sym^p_0 \T \otimes \T $ considered above.

Assume that $\tilde E_i$ are
$\SO(n)$-representations isomorphic to $E_i$ and that
$\pi_i : E \otimes \T  \rightarrow \tilde E_i$ are representation morphisms with $\pi_i \circ \pi_i^\ast = c_i \, \id$
for some non-zero constants $c_i$. Then we can define projections 
$p_i : E \otimes \T  \rightarrow E_i \subset E \otimes \T $ as above by
$p_i := \frac{1}{c_i} \, \pi_i^\ast \pi_i$. From the condition on $\pi_i$
we obtain $p_i^2 = p_i^\ast \circ p_i = p_i$. Now we define two sets
of operators on sections of $EM$:
$\dd _i := \pi_i \circ \nabla : \Gamma(EM) \rightarrow \Gamma({\tilde E_i M})$
and
$P_i := p_i \circ \nabla : \Gamma (EM) \rightarrow \Gamma(EM \otimes \T M)$.

We then have the general formula
$
\dd_i^\ast \, \dd_i \;=\; c_i \, P_i^\ast \, P^{\phantom *}_i
$.
Indeed we have 
$$
\dd_i^\ast \, \dd^{\phantom *}_i \;=\;  \nabla^\ast \,\pi_i^\ast \, \pi_i \nabla
\;=\; c_i \, \nabla^\ast \, p_i  \, \nabla
\;=\; c_i \, \nabla^\ast\,  p_i^\ast \, p^{\phantom *}_i\, \nabla
\;=\; c_i \, P^\ast_i \, P_i\ .
$$
The statement of the lemma now follows from \eqref{pi12} together with \eqref{pi11}--\eqref{pi2}.
\qed

\section{Basics on Killing and conformal Killing tensors}

\begin{ede}
A symmetric tensor $K \in \Gamma(\Sym^p \T M)$ is called {\em conformal Killing tensor} if
there exists some symmetric tensor $k \in \Gamma( \Sym^{p-1}\T M)$ with
$\;
\dd  K = \LL  (k)
$.
\end{ede}

\begin{elem} \label{lemma:conformalInvariant}
    The defining equation for conformal Killing tensors is conformally
    invariant. More precisely, a section $K$ of $ \Sym^{p} \T M$ is a conformal Killing tensor
    with respect to the metric $g$, if and only if it is a conformal Killing tensor with respect to every conformally related metric $  g' = e^{2 f} g$.
\end{elem}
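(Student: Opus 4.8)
The plan is to keep $K$ fixed as a section of the bundle $\Sym^p\T M$, which does not depend on the metric, and to compute how the operators $\dd$ and $\LL$ transform under the conformal change $g'=e^{2f}g$. The observation I would emphasise from the start is that the symmetric product $\cdot$ is a purely tensorial operation, hence the \emph{same} for $g$ and for $g'$, while the contraction $\lr$, the operator $\LL$ and --- through the Levi-Civita connection --- the operator $\dd$ all depend on the metric. I write $\nabla,\lr,\LL,\dd$ for the $g$-objects and $\nabla',\lr',\LL',\dd'$ for the $g'$-objects, set $\xi:=\grad f$ (the gradient of $f$ with respect to $g$), and work with the local $g'$-orthonormal frame $e_i':=e^{-f}e_i$.

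First I would derive the transformation law of $\dd$. Starting from the classical formula
$$
\nabla'_X Y \;=\; \nabla_X Y + g(\xi,X)\,Y + g(\xi,Y)\,X - g(X,Y)\,\xi\ ,
$$
and using that $D:=\nabla'-\nabla$ is a derivation of the algebra $\Sym\T M$, a short computation on decomposable sections would give, for $X\in\T M$ and $K\in\Gamma(\Sym^p\T M)$,
$$
\nabla'_X K - \nabla_X K \;=\; p\,g(\xi,X)\,K \;+\; X\cdot(\xi\lr K)\;-\;\xi\cdot(X\lr K)\ .
$$
Applying $\sum_i e_i\cdot(\,\cdot\,)$ to this identity, and using $\sum_i g(\xi,e_i)\,e_i=\xi$, the relation $\sum_i e_i\cdot(e_i\lr K)=\deg K=pK$, and $\sum_i e_i\cdot e_i\cdot(\xi\lr K)=\LL(\xi\lr K)$, the two copies of $p\,\xi\cdot K$ cancel, leaving
$$
\sum_i e_i\cdot(\nabla'_{e_i}K-\nabla_{e_i}K)\;=\;\LL(\xi\lr K)\ .
$$
Since $\dd'K=\sum_i e_i'\cdot\nabla'_{e_i'}K=e^{-2f}\sum_i e_i\cdot\nabla'_{e_i}K$, this gives the transformation law
$$
\dd'K\;=\;e^{-2f}\bigl(\dd K+\LL(\xi\lr K)\bigr)\ .
$$

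The conclusion is then immediate. A one-line frame computation gives $\LL'k=e^{-2f}\LL k$ for every $k\in\Gamma(\Sym^{p-1}\T M)$, so $\LL$ and $\LL'$ have the same image. Hence, if $K$ is a conformal Killing tensor for $g$, say $\dd K=\LL k$, then
$$
\dd'K\;=\;e^{-2f}\LL\bigl(k+\xi\lr K\bigr)\;=\;\LL'\bigl(k+\xi\lr K\bigr)\ ,
$$
so $K$ is a conformal Killing tensor for $g'$, with associated tensor $k'=k+\xi\lr K$. The converse follows by symmetry, exchanging the roles of $g$ and $g'$ (equivalently, replacing $f$ by $-f$).

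I expect the only genuine work to be the identity for $\nabla'_X K-\nabla_X K$: extending the connection-change formula to $\Sym^p\T M$ through the derivation property and tracking the combinatorial factor $p$ so that the cancellation produces \emph{exactly} $\LL(\xi\lr K)$. This bookkeeping, rather than any conceptual point, is where care is needed; everything afterwards is formal manipulation with the definitions of $\deg$, $\lr$ and $\LL$ and with the rescaling of a $g$-orthonormal frame, and in particular nothing forces $K$ to be trace-free.
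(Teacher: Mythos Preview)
Your proof is correct and follows essentially the same route as the paper: both derive the identity $\nabla'_XK-\nabla_XK=p\,g(\xi,X)K+X\cdot(\xi\lr K)-\xi\cdot(X\lr K)$ from the conformal change of the Levi--Civita connection, sum against $e_i\cdot$ to obtain $\dd'K=e^{-2f}\bigl(\dd K+\LL(\xi\lr K)\bigr)$, and conclude using $\LL'=e^{-2f}\LL$. Your version is slightly more explicit about the frame rescaling and about the converse direction, but the argument is the same.
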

\proof
Let $X, Y$ be any vector fields.
Then the Levi-Civita connection $\nabla'$ for  $g' $  is given by
$$
\nabla'_X Y = \nabla_XY  + \dd f (X) Y + \dd f(Y) X - g(X, Y)\, \grad_g(f)
$$
where  $\grad_g(f)$ is the gradient of $f$ with respect
to  $g$ (cf. \cite{besse}, Th. 1.159). It immediately  follows that  for any section
$K$ of $\Sym^p \T M$ we have
$$
\nabla'_X K \;=\; \nabla_X K \,+\, p \, \dd f(X) \, K \,+\, X \cdot \grad_g(f) \, \lrcorner \, K 
\,-\,  \grad_g(f) \cdot X\,\lrcorner\, K \ .
$$
Hence we obtain for the differential 
$\dd' K =  \sum _i e'_i \cdot \nabla'_{e'_i} K = e^{-2f} \sum_i e_i \cdot \nabla'_{e_i} K$
the equation
\bea
\dd' K &=& e^{-2f} (\dd K + p \, \grad_g(f) \cdot K + \LL\,(\grad_g(f) \,\lrcorner\, K) - p \, \grad_g(f) \cdot K)\\
&=& 
e^{-2f} \dd K + \LL' \,(\grad_g(f) \,\lrcorner\, K) \ .
\eea
Hence if $K$ is conformal Killing tensor with respect to the metric $g$, i.e. $\dd K = \LL k$ for some
section $k$ of $\Sym^{p-2} \T M$, then $K$ is 
a conformal  Killing tensor with respect to the metric $g'$, too. Indeed  $\dd' K = \LL'(k + \grad_g(f) \,\lrcorner\, K)$.
\qed

\medskip
Note that $K$ is conformal Killing if and only if its trace-free part is conformal Killing. Indeed,  since $\dd $ and $\LL $ commute,
if $K = \sum_{i\ge 0} \, \LL ^i \, K_i$, with $K_i\in\Gamma(\Sym^{p-2i}_0\T M)$ is the standard decomposition of $K$, then $\dd K = \sum_{i\ge 0} \, \LL ^i \, \dd K_i$, so $\dd K$ is in the image of $\LL $ if and only if $\dd K_0$ is in the image of $\LL $.
It is thus reasonable to consider only trace-free
conformal Killing tensors.

\medskip

\begin{elem}\label{conf}
Let $K \in \Gamma(\Sym^p \T M)$, then $K$ is a conformal Killing tensor if and only if
$$
\dd  K_0 \;=\; -\, \tfrac{1}{n+2p-2}\,\LL  \, \d K_0
$$
or, equivalently, if and only if  $\; (\dd  K_0)_0 = 0$. In particular, a trace-free
symmetric tensor $K \in \Gamma(\Sym^p_0 \T M)$ is a conformal Killing tensor
if and only if $P_1(K)=0$.
\end{elem}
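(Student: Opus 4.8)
The plan is to reduce everything to the trace-free case and then read off the statement from the standard decomposition together with the projection formula \eqref{dprojection}. By the remark preceding the lemma, $K$ is a conformal Killing tensor if and only if $\dd K$ lies in the image of $\LL$, and this happens if and only if $\dd K_0$ lies in the image of $\LL\colon\Gamma(\Sym^{p-1}\T M)\to\Gamma(\Sym^{p+1}\T M)$. So I would reduce the problem to proving the chain of equivalences
\[
\dd K_0\in\mathrm{im}(\LL)\;\iff\;(\dd K_0)_0=0\;\iff\;\dd K_0=-\tfrac{1}{n+2p-2}\,\LL\,\d K_0\ .
\]

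Here the second equivalence is immediate from \eqref{dprojection}: since $K_0\in\Gamma(\Sym^p_0\T M)$ is trace-free, one has $(\dd K_0)_0=\dd K_0+\tfrac{1}{n+2p-2}\LL\,\d K_0$, so the vanishing of the left-hand side is exactly the displayed identity. For the first equivalence I would use the standard decomposition $\dd K_0=(\dd K_0)_0+\LL L$ of $\dd K_0\in\Gamma(\Sym^{p+1}\T M)$, with $L\in\Gamma(\Sym^{p-1}\T M)$: if $(\dd K_0)_0=0$ then $\dd K_0=\LL L\in\mathrm{im}(\LL)$, and conversely if $\dd K_0=\LL(k)$ then $(\dd K_0)_0=(\LL k)_0=0$, because the image of $\LL$ is the sum of the summands $\LL^i\Sym^{p+1-2i}_0\T M$ with $i\ge1$, which is complementary to $\Sym^{p+1}_0\T M$ in the standard decomposition. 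This gives the first part of the lemma.

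For the final statement I would specialize to $K\in\Gamma(\Sym^p_0\T M)$, so that $K_0=K$ and the criterion becomes $(\dd K)_0=0$. By \eqref{pi12} we have $(\dd K)_0=\pi_1(\nabla K)$, hence $P_1(K)=p_1(\nabla K)=\tfrac{1}{p+1}\pi_1^*\pi_1(\nabla K)=\tfrac{1}{p+1}\pi_1^*\big((\dd K)_0\big)$; since $\pi_1\pi_1^*=(p+1)\Id$ on $\Sym^{p+1}_0\T M$, the map $\pi_1^*$ is injective, and therefore $P_1(K)=0$ if and only if $(\dd K)_0=0$. I do not expect a genuine obstacle: everything is formal given the Preliminaries, and the only point requiring a little care is the identification of $\mathrm{im}(\LL)$ with the non-leading part of the standard decomposition of $\Sym^{p+1}\T M$, which is what makes the equivalence $\dd K_0\in\mathrm{im}(\LL)\iff(\dd K_0)_0=0$ work.
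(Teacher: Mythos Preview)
Your proof is correct and follows essentially the same approach as the paper: both reduce to the trace-free part $K_0$ via the remark preceding the lemma, then use \eqref{dprojection} to identify $(\dd K_0)_0$ and observe that $\mathrm{im}(\LL)$ is the complement of $\Sym^{p+1}_0\T M$ in the standard decomposition. You spell out the final equivalence $P_1(K)=0\iff(\dd K)_0=0$ via the injectivity of $\pi_1^*$ a bit more explicitly than the paper, which simply takes it as a consequence of the formula $P_1(K)(X)=\tfrac{1}{p+1}\,X\lrcorner(\dd K)_0$.
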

\proof
We write $K = \sum_{i \ge 0} \, \LL ^i \, K_i$, where $K_i$ is a section of $\Sym^{p-2i}_0\T M$.
Because of $[\LL ,\dd ] =0 $ we have
$$
\dd K  = \sum_{i\ge 0} \, \LL ^i \, \dd  \, K_i = \dd \, K_0 + \sum_{i \ge 1} \, \LL ^i \dd  K_i
= (\dd K_0 +  \tfrac{1}{n+2p-2}\, \LL  \, \delta \, K_0 ) - \tfrac{1}{n+2p-2}\, \LL  \, \delta \, K_0 + \sum_{i \ge 1} \, \LL ^i \dd  K_i
$$
We know from \eqref{dprojection} that the bracket on the right hand-side is  the trace-free part of $\dd K_0$.
Thus 
$
\dd  K_0 = -\, \tfrac{1}{n+2p-2}\,\LL  \, \d K_0
$
holds if and only if $\dd K = \LL (k)$ for some section $k$  of $\Sym^{p-1}\T M$, i.e. if and only
if $K$ is a conformal Killing tensor.
\qed

\begin{ere} \label{ft}
Since $P_1(K)$ is the projection of the covariant derivative  $\nabla K$ onto the Cartan summand
$\Sym^{p+1}_0 \T M \subset \T M \otimes \Sym^p_0 \T M$, it follows that the
defining differential equation for trace-free conformal Killing tensors is of finite
type, also called strongly elliptic (cf. \cite{kalina}). In particular the space of trace-free conformal Killing tensors is finite 
dimensional on any connected manifold. Moreover  one can show that a conformal Killing  $p$-tensor has
to vanish identically on $M$ if its first $2p$ covariant derivatives vanish
in some point of $M$, cf.  \cite{eastwood}.
\end{ere}

\begin{ede}
A symmetric tensor $K \in \Gamma( \Sym^p \T M)$ is called {\em Killing tensor} if
$\;
\dd  K = 0 \ .
$ A trace-free Killing tensor is called {\em St\"ackel tensor}.
\end{ede}

\begin{elem}
A symmetric tensor $K\in \Gamma(\Sym^p \T M)$ is a Killing tensor if and only if
the complete symmetrization of $\nabla K$ vanishes. A Killing tensor is in particular a conformal Killing tensor.
\end{elem}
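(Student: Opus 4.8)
The plan is to recognize $\dd$ as, up to a universal constant, the composition of the covariant derivative with complete symmetrization, after which both statements become immediate.

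First I would note that for $K\in\Gamma(\Sym^p\T M)$ the covariant derivative $\nabla K=\sum_i e_i\otimes\nabla_{e_i}K$ is a section of $\T M\otimes\Sym^p\T M$, hence of $\bigotimes^{p+1}\T M$, and I would compare its complete symmetrization with $\dd K=\sum_i e_i\cdot\nabla_{e_i}K$. Since each $\nabla_{e_i}K$ is already symmetric in its $p$ arguments, applying the complete symmetrization operator to $e_i\otimes\nabla_{e_i}K$ produces $e_i\cdot\nabla_{e_i}K$ up to a fixed nonzero combinatorial constant depending only on $p$ (with the convention $v_1\cdot\ldots\cdot v_p=\sum_{\sigma\in S_p}v_{\sigma(1)}\otimes\ldots\otimes v_{\sigma(p)}$ fixed in Section~2); summing over $i$ shows that the complete symmetrization of $\nabla K$ is a nonzero multiple of $\dd K$. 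Alternatively one can argue representation-theoretically: the summand $\Sym^{p+1}\T M$ occurs with multiplicity one in $\T M\otimes\Sym^p\T M$, so the algebraic map $v\otimes T\mapsto v\cdot T$ and complete symmetrization are proportional, whence $\dd K=c_p\,\Sym(\nabla K)$ with $c_p\neq 0$. In either case $\dd K=0$ if and only if $\Sym(\nabla K)=0$, which is the first claim.

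For the second assertion I would simply observe that the Killing equation $\dd K=0$ is the special case $\dd K=\LL(k)$ of the conformal Killing equation in which $k:=0\in\Gamma(\Sym^{p-1}\T M)$; hence every Killing tensor is a conformal Killing tensor.

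There is essentially no obstacle here: the only mildly fiddly point is pinning down the combinatorial constant relating the unnormalized symmetric product to complete symmetrization, but since all that is needed is that it be nonzero, this can be left implicit or bypassed via the one-line multiplicity argument above.
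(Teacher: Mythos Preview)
Your proposal is correct and follows essentially the same approach as the paper: the paper simply evaluates $\dd K$ on vectors and obtains $\dd K(X_1,\ldots,X_{p+1})=\sum_{\sigma}(\nabla_{X_{\sigma(1)}}K)(X_{\sigma(2)},\ldots,X_{\sigma(p+1)})$, which is exactly the complete symmetrization of $\nabla K$, while the second assertion is left implicit (it is the trivial case $k=0$ you point out). Your multiplicity-one argument is a pleasant alternative, but it is a cosmetic variant of the same idea rather than a different route.
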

\proof
Recall that $\dd  : \Gamma(\Sym^p \T M) \rightarrow \Gamma(\Sym^{p+1} \T M)$ was defined as
$
\dd  K \;=\; \sum_i  e_i \cdot \nabla_{e_i}K 
$, where $\{e_i\}$ is some local orthonormal frame. Thus
\bea
\dd K(X_1, \ldots, X_{p+1}) &=&\sum_i \sum_{\sigma \in S_p}g(e_i, X_{\sigma(1)}) \, (\nabla_{e_i} K)(X_{\sigma(2) }, \ldots , X_{\sigma(p+1)})\\
&=&
\sum_{\sigma \in S_p} \, (\nabla_{X_{\sigma(1)}} K) (X_{\sigma(2) }, \ldots , X_{\sigma(p+1)}) \ .
\eea
\qed

\medskip

\begin{epr}\label{system}
Let $K = \sum_{i \ge 0} \, \LL ^i K_i\in \Gamma(\Sym^p \T M)$, with $K_i \in \Gamma(\Sym^{p-2i}_0\T M)$ be a symmetric tensor, $p = 2l + \epsilon$ with $\epsilon = 0$ or $1$. Then $K$ is a Killing tensor if and only if the following system of equations holds:
\bea
\dd  K_0 &=& a_0 \LL  \d K_0   \ ,\\[1ex]
\dd K_1 &=& a_1 \LL  \d K_1 - a_0 \d K_0   \ ,   \\[1ex]
\vdots &=& \qquad \vdots\\[1ex]
\dd  K_l &=& a_l \LL  \d K_l - a_{l-1}\d K_{l-1} \ ,  \\[1ex]
0 &=& \d K_l \ ,
\eea
where $a_i$ are the constants of Lemma \ref{constants}. In particular, the trace-free part $K_0$ is a conformal Killing tensor.
\end{epr}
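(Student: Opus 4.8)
The plan is to expand $\dd K$ using the standard decomposition $K = \sum_{i\ge 0} \LL^i K_i$ and the fact that $\dd$ commutes with $\LL$ (see \eqref{commu3}), and then to rewrite each term $\dd K_i$ in terms of its trace-free part plus a correction proportional to $\LL\d K_i$, using Lemma \ref{constants}. Since $\dd K_i - a_i\LL\d K_i$ lies in $\Sym^{p-2i+1}_0\T M$, writing $\dd K_i = (\dd K_i - a_i\LL\d K_i) + a_i\LL\d K_i$ turns $\dd K = \sum_i \LL^i\dd K_i$ into a sum in which the summands now live in distinct trace-free building blocks $\LL^j(\Sym^{p-2j+1}_0\T M)$. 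The key bookkeeping observation is that $a_i\LL^{i+1}\d K_i$ has the same ``$\LL$-degree'' as the leading term $\LL^{i+1}(\dd K_{i+1} - a_{i+1}\LL\d K_{i+1})$ coming from the next index, so these combine; concretely, the coefficient of $\LL^i$ in $\dd K$ (after this regrouping) is $(\dd K_i - a_i\LL\d K_i) \;-\; a_{i-1}\d K_{i-1}$ for $i\ge 1$, with $i=0$ giving $\dd K_0 - a_0\LL\d K_0$ and the top giving the extra scalar/vector term $-a_l\,\d K_l$ in $\LL^{l}(\Sym^{\epsilon}\T M)$ (or one degree up, depending on parity).

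Next I would invoke uniqueness of the standard decomposition: $\dd K = 0$ holds if and only if every trace-free component in this regrouped expansion vanishes. That yields precisely the stated system: the $\LL^0$ component gives $\dd K_0 = a_0\LL\d K_0$; the $\LL^i$ component for $1\le i\le l$ gives $\dd K_i = a_i\LL\d K_i - a_{i-1}\d K_{i-1}$ (note $a_{i-1}\d K_{i-1}$ is already in $\Sym^{p-2i+1}_0\T M$ because $v\lrcorner$ commutes with $\L$, so no further projection is needed); and the topmost component forces $\d K_l = 0$. The final assertion, that $K_0$ is then a conformal Killing tensor, is immediate from the first equation $\dd K_0 = a_0\LL\d K_0 = -\tfrac1{n+2p-2}\LL\d K_0$ combined with Lemma \ref{conf}.

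The main obstacle is purely organizational rather than conceptual: one must check carefully that after substituting $\dd K_i = (\dd K_i - a_i\LL\d K_i) + a_i\LL\d K_i$ for each $i$ and shifting the index in the $\LL^{i+1}\d K_i$ terms, the collected coefficient of each power $\LL^i$ really is a trace-free tensor in the correct bundle $\Sym^{p-2i+1}_0\T M$ — this requires knowing both that $\dd K_i - a_i\LL\d K_i$ is trace-free (Lemma \ref{constants}) and that $\d K_{i-1}$ is trace-free (from $[\L, v\lrcorner]=0$ in \eqref{commu2}). A secondary subtlety is the top of the chain: when $p = 2l$ the tensor $K_l$ is a function and $\d K_l = 0$ automatically unless interpreted as the $p=2l$, and when $p = 2l+1$, $K_l$ is a vector field and $\d K_l=0$ is the genuine nontrivial closing equation; one should phrase the last line so it covers both parities uniformly, exactly as in the statement with $p = 2l+\epsilon$. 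Once these trace-degree checks are in place, the equivalence follows by matching components, and the conformal Killing conclusion for $K_0$ is a one-line consequence.
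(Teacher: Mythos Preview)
Your proposal is correct and follows essentially the same approach as the paper: expand $\dd K=\sum_i \LL^i\dd K_i$, split each $\dd K_i$ as $(\dd K_i-a_i\LL\d K_i)+a_i\LL\d K_i$, reindex, and use that the resulting coefficients $\dd K_i-a_i\LL\d K_i+a_{i-1}\d K_{i-1}$ are trace-free (by Lemma~\ref{constants} and $[\Lambda,\d]=0$) to conclude by uniqueness of the standard decomposition. The paper handles the top equation $\d K_l=0$ simply by letting the index run one step further (with $K_{l+1}=0$) rather than treating the parity cases separately, but this is a cosmetic difference.
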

\proof
We write  $K = \sum_{i \ge 0} \, \LL ^i K_i$ with $K_i \in \Gamma(\Sym^{p-2i}_0\T M)$, then 
$\dd K = 0$ if and only if
$$
0 = \sum_{i \ge 0} \, \LL ^i \, \dd K_i = \sum_{i \ge 0} \, \LL ^i \, (\dd  K_i \,-\, a_i \LL \delta K_i) +  \, a_i \LL ^{i+1}\delta K_i
= \sum_{i \ge 0}\, \LL ^i (\dd K_i \,-\, a_i \LL \delta K_i \,+\, a_{i-1}\delta K_{i-1}) \ ,
$$
where we set  $a_{-1}=0 $  and $K_{-1}=0$ by convention.
From Lemma \ref{constants} and $[\Lambda , \delta] = 0$ it follows that $\dd K_i \,-\, a_i \LL \delta K_i \,+\, a_{i-1}\delta K_{i-1}$   is trace-free.
We conclude that $\dd K=0$ if and only if 
$
\dd K_i \,-\, a_i \LL \delta K_i \,+\, a_{i-1}\delta K_{i-1} = 0
$
for all $i$.
\qed

\medskip

\begin{exe}
For $p=2$ and $K \in \Gamma(\Sym^2\T M)$ we have $K = K_0 + 2 f  g$, for some function $f = K_1 \in C^\infty(M)$.
Then $K$ is a Killing tensor if and only if
\begin{equation}\label{2tensor}
\dd  K_0 \;=\; -\,\tfrac{1}{n+2} \, \LL  \,\d\, K_0 \qquad \mbox{and} \qquad \dd  f \;=\; \tfrac{1}{n+2}\,  \d\, K_0\ .
\end{equation}
The second equation can equivalently be written as $ \dd  \, \tr K = 2 \d K$. 
\end{exe}

\begin{exe}
For $p=3$ and $K \in \Gamma(\Sym^3\T M)$ we have $K = K_0 + \LL  \xi$, for some vector field $\xi = K_1$.
Then $K$ is a Killing tensor if and only if
$$
\dd  K_0 = - \tfrac{1}{n+4}\,  \LL  \d K_0\ , \quad  \dd  \xi = \tfrac{1}{n+4}\,  \d K_0
\quad \mbox{and} \quad \d \xi = 0 \ .
$$
\end{exe}

\medskip

\begin{ecor}\label{tracefree}
If $K \in \Gamma(\Sym^p \T M)$ is a trace-free Killing tensor, then $\d K = 0$. In other words,
St\"ackel tensors satisfy the equations $\dd  K = 0 = \d K$, or equivalently
the equations $P_1(K) = 0 = P_2(K)$.
\end{ecor}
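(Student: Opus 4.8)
The plan is to derive the one new fact $\d K = 0$ directly from the two hypotheses on a St\"ackel tensor, namely $\dd K = 0$ (Killing) and $\L K = 0$ (trace-free), using nothing more than the commutator relations \eqref{commu3}; the reformulation in terms of $P_1$ and $P_2$ then follows formally from \eqref{pi12} together with the definitions \eqref{pi11}--\eqref{pi2}.

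First I would apply the trace operator $\L$ to the Killing equation $\dd K = 0$. By the commutator relation $[\L,\dd] = -2\,\d$ from \eqref{commu3} one has $0 = \L\dd K = \dd\L K - 2\,\d K$, and since $\L K = 0$ this forces $\d K = 0$. Thus a St\"ackel tensor satisfies $\dd K = 0 = \d K$. (Alternatively, the same conclusion drops out of Proposition \ref{system} applied to a tensor all of whose higher components $K_i$, $i \ge 1$, vanish: the second equation of that system then reads $0 = -a_0\,\d K_0$, and $a_0 = -\tfrac{1}{n+2p-2} \neq 0$.)

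For the equivalence with $P_1(K) = 0 = P_2(K)$: by \eqref{pi12} we have $\pi_1(\nabla K) = (\dd K)_0 = 0$ and $\pi_2(\nabla K) = -\d K = 0$, hence $P_1(K) = p_1(\nabla K) = 0$ and $P_2(K) = p_2(\nabla K) = 0$ by the definitions of $p_1$ and $p_2$ in \eqref{pi11}--\eqref{pi2} (that $P_1(K) = 0$ also follows from Lemma \ref{conf}, since a Killing tensor is conformal Killing). For the converse, note that $\pi_1^*$ and $\pi_2^*$ are injective, since $\pi_i\pi_i^*$ is a nonzero multiple of the identity; hence $P_1(K) = 0$ forces $(\dd K)_0 = 0$ and $P_2(K) = 0$ forces $\d K = 0$, and substituting both into \eqref{dprojection} gives $\dd K = (\dd K)_0 - \tfrac{1}{n+2p-2}\,\LL\,\d K = 0$, so that $K$ is indeed a trace-free Killing tensor.

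The computation is essentially one line, so there is no genuine obstacle; the only points needing (trivial) verification are that the coefficient in $[\L,\dd]$ --- equivalently $a_0$ --- is nonzero, and that the embeddings $\pi_1^*,\pi_2^*$ are injective, both of which are already contained in the preliminaries.
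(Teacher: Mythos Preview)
Your proof is correct. The paper states this as a corollary of Proposition \ref{system} without further argument, and your parenthetical alternative is exactly that deduction; your primary one-line argument via the commutator $[\L,\dd]=-2\d$ is a slightly more direct route to the same conclusion, and your treatment of the equivalence with $P_1(K)=0=P_2(K)$ spells out what the paper leaves implicit.
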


Conversely we may ask what is possible to say about the components of divergence-free Killing tensors.
The result here is

\begin{epr}
Let $K \in \Gamma(\Sym^p \T M)$ be a divergence-free Killing tensor. Then all
components $K_i \in \Gamma(\Sym^{p-2i}_0\T M)$ of its standard decomposition
$K = \sum_{i \ge 0} \LL ^i \, K_i$  are St\"ackel tensors.
\end{epr}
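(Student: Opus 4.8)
The plan is to argue by induction on the degree $p$, first peeling off the trace-free component $K_0$ and then reducing the problem to a divergence-free Killing tensor of degree $p-2$. The base cases $p=0$ and $p=1$ are immediate: a symmetric $0$-tensor is a constant and a symmetric $1$-tensor is automatically trace-free, so in both cases $K=K_0$ and there is nothing to prove. For the inductive step we assume $p\ge 2$ and that the assertion holds in degree $p-2$.

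\emph{Step 1: the component $K_0$ is a St\"ackel tensor.} By Proposition \ref{system} we already know $\dd K_0=a_0\,\LL\,\d K_0$, so it suffices to prove $\d K_0=0$; then $\dd K_0=0$ as well. For this I extract the trace-free part of the hypothesis $\d K=0$. Since $\dd$ commutes with $\LL$ and $[\LL,\d]=2\dd$ by \eqref{commu3}, one has $\d(\LL K_1)=\LL\,\d K_1-2\dd K_1$ and, for $i\ge 2$, $\d(\LL^i K_i)$ lies in the image of $\LL$; moreover $\d K_0$ is trace-free because $[\Lambda,\d]=0$ and $\Lambda K_0=0$. Hence the only contributions to the $\Sym^{p-1}_0\T M$-component of $\d K$ come from $\d K_0$ and from $-2(\dd K_1)_0$, so that $0=(\d K)_0=\d K_0-2(\dd K_1)_0$. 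On the other hand, taking trace-free parts in the equation $\dd K_1=a_1\,\LL\,\d K_1-a_0\,\d K_0$ of Proposition \ref{system} yields $(\dd K_1)_0=-a_0\,\d K_0$. Substituting, we obtain $(1+2a_0)\,\d K_0=0$, and since $1+2a_0=\tfrac{n+2(p-2)}{n+2(p-1)}\ne 0$ for $p\ge 2$, this forces $\d K_0=0$, whence $\dd K_0=0$ and $K_0$ is St\"ackel.

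\emph{Step 2: reduction to degree $p-2$.} Set $K':=\sum_{j\ge 0}\LL^j K_{j+1}\in\Gamma(\Sym^{p-2}\T M)$, so that $K=K_0+\LL K'$, and since $K_{j+1}\in\Sym^{(p-2)-2j}_0\T M$, this is the standard decomposition of $K'$. From Step 1 together with the hypotheses $\dd K=0$ and $\d K=0$ we deduce $\dd(\LL K')=0$ and $\d(\LL K')=0$. Using $[\LL,\dd]=0$ the first identity reads $\LL\,\dd K'=0$, and then $\d(\LL K')=\LL\,\d K'-2\dd K'=\LL\,\d K'$ shows $\LL\,\d K'=0$. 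As $\LL$ is injective on symmetric tensors (which follows from the decomposition $\Sym^q V=\bigoplus_i\LL^i\Sym^{q-2i}_0 V$), we conclude $\dd K'=0=\d K'$. Thus $K'$ is a divergence-free Killing tensor of degree $p-2$, and by the induction hypothesis all components $K_{j+1}$, $j\ge 0$, of its standard decomposition are St\"ackel tensors. Together with Step 1 this proves that every $K_i$ is a St\"ackel tensor.

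The only genuinely delicate point is the algebraic bookkeeping caused by the fact that $\d$, unlike $\dd$, does not commute with $\LL$: one has $\d\LL=\LL\d-2\dd$, and it is exactly this correction term that produces the coefficient $1+2a_0$ in Step 1, so the whole argument hinges on that scalar being non-zero. Everything else -- the reduction in Step 2 and the injectivity of $\LL$ -- is formal. Alternatively one could bypass the induction and read off directly all the component equations of $\d K=0$ with respect to the standard decomposition (this leads to a cascade $(1+2a_j)\,\d K_j=0$), but the recursive argument above is shorter.
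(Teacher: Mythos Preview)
Your proof is correct and follows essentially the same approach as the paper: extracting the trace-free part of $\d K=0$ to obtain $(1+2a_0)\,\d K_0=0$, then reducing to the lower-degree tensor $K'=\tilde K$ via the injectivity of $\LL$ and iterating. The only differences are cosmetic---you phrase the iteration as a formal induction on $p$ and project onto $\Sym^{p-1}_0$ directly, whereas the paper writes the remainder as ``$+\,\LL k_1$'' and says ``iterate''.
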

\proof
We first remark that iteration of the commutator formula \eqref{commu3} gives
$[\d, \LL ^i] = - 2 i \LL ^{i-1} \dd $ for $i \ge 1$. Then $\d K = 0$ implies
$
0 = \d K_0 + \sum_{i\ge 1} \d \LL ^i K_i = \d K_0 - 2 \dd  K_1 + \LL  k_1
$
for some symmetric tensor $k_1$. Substituting $\dd  K_1$ by the second equation of 
Proposition \ref{system} we obtain
$
0 = (1 + 2 a_0) \d K_0 + \LL   k_2
$
for some symmetric tensor $k_2$. Since $\d K_0$ is trace-free and since the
coefficient $(1+2a_0) $ is different from zero for $n>2$ it follows that $\d K_0 = 0$.
But then the first equation of Proposition \ref{system} gives $\dd  K_0 = 0$. Thus
$K_0$ is a St\"ackel tensor.

We write $K = K_0 + \LL  \tilde K$ with $\tilde K = \sum_{i\ge 1} \LL ^{i-1} K_i$. Since $\dd $
commutes with $\LL $ and multiplication with $\LL $ is injective the equations
$\dd  K = 0 = \dd K_0$ imply $\dd \tilde K = 0$. Similarly we obtain 
$
0 = \d (\LL  \, \tilde K) = \LL  \d \tilde K - 2 \dd  \tilde K = \LL  \d \tilde K
$.
Thus $\d \tilde K =0$ and we can iterate the argument above.
\qed

\begin{ere}
The above proof shows that the map $\LL $ preserves the space of divergence-free
Killing tensors. 
\end{ere}

In \cite{schoebel1} the class of {\it special conformal Killing tensors} was introduced. These tensors were defined as symmetric $2$-tensors
$K \in \Gamma(\Sym^2 \T M) $ satisfying the equation
\beq\label{special}
(\nabla_X K ) (Y, Z) \;=\; k(Y)\, g(X, Z) \;+\; k(Z)\, g(X, Y)
\eeq
for some $1$-form $k$. It follows that $k= \frac12 \, \dd  \, \tr(K)$ and that
 $\dd  K = \LL  (k)$. Thus solutions of Equation \eqref{special} are automatically 
conformal Killing tensors. The tensor $k$ also satisfies  $\d K = -(n+1) \, k$ and it is easily  proved that
$\hat K := K -\tr (K)\, g$ is a Killing tensor, which is called {\it special Killing tensor} in \cite{schoebel1}.
Moreover the map $K\mapsto \hat K$ is shown to be injective and equivariant with respect to the action of the
isometry group.

We will now generalize these definitions and statements to Killing tensors of arbitrary degree.
We start with

\begin{ede}
A symmetric tensor $K\in \Gamma(\Sym^p\T M)$ is called {\em special conformal Killing tensor}
if the equation
$
\nabla_X K = X \cdot k
$
holds for all vector fields  $X$ and some symmetric  tensor $k \in \Gamma(\Sym^{p-1}\T M)$.
\end{ede}

For $p=2$, this is equivalent to Equation
\eqref{special}. Immediately from the definition it follows that $k = -\frac{1}{n+1}\d K$ and that $\dd K = \LL  k$.
Hence $K$ is in particular a conformal Killing tensor. Using the standard decomposition 
$
K = \sum_{j\ge 0} \LL ^j \, K_j
$
and
$
k = \sum_{j\ge 0} \LL ^j \, k_j
$
we can reformulate the defining equation for special conformal Killing tensors into a system
of equations for the components $K_j$ and $k_j$. Let $K$ and $k$ be symmetric tensors
as above then 
$
\nabla_X K = \sum_{j\ge 0} \LL ^j \, \nabla_X K_j
$
and by \eqref{projection} we have:
$$
X \cdot k \;=\; \sum_{j\ge 0} \LL ^j \,  (X \cdot k_j)  
\;=\;
 \sum_{j\ge 0}\left( \LL ^j \, (X \cdot k_j)_0 \;+\; \tfrac{1}{n+2(p-2-2j)} \LL ^{j+1}(\, X \lrcorner \, k_j)\right) \ .
$$
Comparing coefficients of powers of $\LL $, we conclude that $K$ is a special
conformal Killing tensor if and only if the following system of  equations is satisfied
\begin{equation}\label{sck}
\nabla_X K_j \;=\; (X\cdot k_j)_0 \;+\; \tfrac{1}{n+2(p-2j)} \, X \lrcorner \, k_{j-1},
\qquad j \ge 0 \ .
\end{equation}
With the convention $k_{-1}=0$ this contains the equation 
$
\nabla_X K_0 = (X \cdot k_0)_0
$
for $j=0$.

\medskip

\begin{ede}
A symmetric tensor $\hat K\in \Gamma(\Sym^p\T M)$ is called {\em special Killing tensor}
if it is a Killing tensor satisfying the additional equation
$
\nabla_X \hat K = X \cdot \hat k + X \lrcorner \, \hat l
$
for all vector fields $X$ and some symmetric tensors $\hat k \in \Gamma(\Sym^{p-1} \T M)$
and $\hat l \in \Gamma(\Sym^{p+1} \T M)$.
\end{ede}

From the definition it follows directly that the tensors $\hat k$ and $\hat l$ are related by the equations
$$
\hat l \;=\; -\, \tfrac{1}{p+1} \LL  \hat k
\qquad
\mbox{and}
\qquad
\d \hat K = -(n+p-1) \hat k - \Lambda \hat l \ .
$$
Hence $\hat K$ is a special Killing tensor if and only if for all vector fields $X$ we have
$$
\nabla_X \hat K \;=\; X \cdot \hat k - \tfrac{1}{p+1} X \lrcorner \,\LL  \hat k 
\;=\; \tfrac{p-1}{p+1} X \cdot \hat k - \tfrac{1}{p+1} \LL  X \lrcorner \, \hat k .
$$

As for special conformal Killing tensors we can reformulate the defining equation for special Killing tensors
as a system of equations for the components $\hat K_j$ and $\hat k_j$. We find

\bea
\sum_{j \ge 0} \LL ^j \nabla_X \hat K_j &=&
\sum_{j \ge 0}  \tfrac{p-1}{p+1} \, X \cdot \LL ^j \, \hat k_j \;-\; \tfrac{1}{p+1} \,
\LL \, X \lrcorner \, \LL ^j \, \hat k_j \\[1ex]
&=&
\sum_{j \ge 0}  \tfrac{p-1 -2j }{p+1} \,  \LL ^j \, X \cdot  \hat k_j \;-\;\tfrac{1}{p+1} \,
\LL ^{j+1} X \lrcorner  \, \hat k_j \\[1ex]
&=&
\sum_{j \ge 0}  \tfrac{p-1 -2j }{p+1} \,  \LL ^j \, (X \cdot  \hat k_j)_0 \;+\;
\left(
\tfrac{p-1-2j}{(p+1)(n+2(p-2j-2))} \,-\, \tfrac{1}{p+1}
\right) \LL ^{j+1} X \lrcorner  \, \hat k_j\\[1ex]
&=&
\sum_{j \ge 0}  \tfrac{p-1 -2j }{p+1} \,  \LL ^j \, (X \cdot  \hat k_j)_0 \;-\;
\tfrac{n+p-2j-3   }{(p+1)(n+2(p-2j-2))} 
\,  \LL ^{j+1} X \lrcorner  \, \hat k_j
\eea
Hence $\hat K$ is a special Killing tensor if and only if the components $\hat K_j$ and $\hat k_j$ of
the standard decomposition satisfy the equations
\begin{equation}\label{sk}
\nabla_X \hat K_j \;=\;  \tfrac{p-1 -2j }{p+1} \,(X \cdot  \hat k_j)_0
\;-\; \tfrac{n+p-2j-1}{(p+1)(n+2(p-2j))} \, X \lrcorner  \, \hat k_{j-1} 
\qquad j \ge 0\ .
\end{equation}
With the convention $\hat k_{-1} = 0$ we have for $j=0$ the equation
$
\nabla_X \hat K_0 = \frac{p-1}{p+1} (X \cdot \hat k_0)_0
$.

\medskip

For a given special conformal Killing tensor $K$ we now want to modify its components
$K_j$ by  scalar factors in order to obtain a special Killing tensor $\hat K$. This will 
generalize the correspondence between special conformal and special Killing $2$-tensors
obtained in \cite{schoebel1}.

We are looking 
for constants $a_j$, such that $\hat K = \sum_{j\ge 0} \hat K_j$ with
$\hat K_j = a_j K_j$ is a special Killing tensor, where $\hat k = \sum_{j\ge 0} \hat k_j$
with $\hat k_j = b_j k_j$ for a other set of constants $b_j$. Considering first the
special Killing equation for $j=0$ we have
$
\nabla_X a_0K_0 = \frac{p-1}{p+1} (X \cdot b_0 k_0)_0
$.
Hence we can define $a_0=1$ and $b_0 = \frac{p+1}{p-1}$. Then writing  \eqref{sk}
with the modified tensors $\hat K_j $ and $\hat k_j$ and comparing it with \eqref{sck}
multiplied by $a_j$, we get the condition 
$
a_j = \frac{p-1-2j}{p+1}b_j
$
for the first summand on the right hand side and 
$$
\tfrac{a_j}{n+2(p-2j)} \;=\; -\, \tfrac{(n+p-2j-1) }{(p+1)(n+2(p-2j))}\,b_{j-1}
\;=\;
-\,\tfrac{(n+p-2j-1) }{(p+1)(n+2(p-2j))} \, \tfrac{p+1}{p+1-2j} \, a_{j-1} \ ,
$$
for the second summand. Hence
$
a_j = - \, \tfrac{n+p-2j-1}{p+1-2j} \, a_{j-1}
$
and in particular
$
a_1 = -\, \tfrac{n+p-3}{p-1}
$.
By this recursion formula the coefficients $a_j$ and $b_j$ are completely determined and
indeed $\hat K$ defined as above is a special Killing tensor.

As an example we consider the case $p=2$. Let $K = K_0 + \LL  K_1$ be a special conformal
Killing $2$-tensor. Then
$
\hat K = K_0 + a_1 \LL  K_1 = K_0 -(n-1)\LL  K_1 = K - n \LL  K_1 = K - \tr(K) g
$.
Indeed $\tr(K) = \Lambda K = 2n K_1$ and $\LL  = 2g$. Hence we obtain for special
conformal Killing $2$-tensors the same correspondence as in \cite{schoebel1}.

Special conformal Killing $2$-tensors have the important property of being integrable. Indeed
their associated Nijenhuis tensor vanishes (cf. \cite{schoebel1}, Prop. 6.5). Recall that if $A$ is any endomorphism field on $M$, its {\it Nijenhuis tensor} is defined by
\bea
N_A(X, Y)
&=&
 -\, A^2 [X,Y] \;+\; A [AX, Y] \;+\; A [X, AY] \;-\; [AX, AY]\\[1ex]
&=&
\; A \, (\nabla_XA)\, Y    \;-\; A \, (\nabla_Y A) X \;-\; (\nabla_{AX} A ) Y \;+\; (\nabla_{AY} A )X \ .
\eea

\begin{elem}
Any special conformal Killing $2$-tensor has vanishing Nijenhuis tensor and thus
is integrable.
\end{elem}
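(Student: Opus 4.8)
\proof
The plan is to rewrite the defining equation of a special conformal Killing $2$-tensor in endomorphism language and substitute it into the covariant expression for the Nijenhuis tensor recalled above. Identify $K\in\Gamma(\Sym^2\T M)$ with the symmetric endomorphism field it defines via the metric, and let $k^\sharp$ denote the vector field metrically dual to the $1$-form $k$ in the defining equation $\nabla_XK=X\cdot k$ (so that $k(Y)=g(k^\sharp,Y)$). Pairing $\nabla_XK=X\cdot k$ with a further vector and regarding the outcome as an endomorphism applied to $Y$, one gets the pointwise identity
$$
(\nabla_XK)Y \;=\; g(k^\sharp,Y)\,X \;+\; g(X,Y)\,k^\sharp\qquad\mbox{for all }X,Y\ .
$$
First I would verify that this is consistent with \eqref{special}: pairing both sides with $Z$ yields $k(Y)g(X,Z)+k(Z)g(X,Y)$ on each side.

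Next I would substitute this identity into
$$
N_K(X,Y) \;=\; K(\nabla_XK)Y \;-\; K(\nabla_YK)X \;-\; (\nabla_{KX}K)Y \;+\; (\nabla_{KY}K)X\ .
$$
Using the displayed formula, the four terms evaluate, in order, to $g(k^\sharp,Y)KX+g(X,Y)Kk^\sharp$, to $g(k^\sharp,X)KY+g(X,Y)Kk^\sharp$, to $g(k^\sharp,Y)KX+g(KX,Y)k^\sharp$, and to $g(k^\sharp,X)KY+g(KY,X)k^\sharp$. The three summands $g(k^\sharp,Y)KX$, $g(X,Y)Kk^\sharp$ and $g(k^\sharp,X)KY$ each occur once with a plus and once with a minus sign and hence cancel, leaving
$$
N_K(X,Y)\;=\;\bigl(g(KY,X)-g(KX,Y)\bigr)\,k^\sharp\;=\;0\ ,
$$
where the last equality holds because $K$ is a symmetric endomorphism. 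Therefore $N_K\equiv0$ and $K$ is integrable.

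I do not expect a genuine obstacle here: the whole argument reduces to a short algebraic manipulation in which the symmetry of $K$ produces the final cancellation. The only thing to be careful about is the bookkeeping when translating between the symmetric-tensor notation $X\cdot k$ and endomorphism notation, and keeping the signs of the four Nijenhuis terms straight; note also that the vector field $k^\sharp$ above is, up to the musical isomorphism, the tensor $k=-\tfrac{1}{n+1}\d K$ appearing earlier, though this identification is not needed for the computation.
\qed
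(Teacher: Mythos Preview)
Your proof is correct and essentially identical to the paper's: both translate $\nabla_XK=X\cdot k$ into the endomorphism identity $(\nabla_XK)Y=g(k^\sharp,Y)X+g(X,Y)k^\sharp$, substitute into the four Nijenhuis terms, and observe that everything cancels by the symmetry of $K$. The only difference is cosmetic (the paper writes out all eight summands at once rather than grouping the cancellations as you do).
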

\proof
Let $A$ be a special conformal Killing $2$-tensor. Then $\nabla_XA = X \cdot \xi $
for some vector field $\xi$. Then
$(\nabla_X A)Y = g(X, Y) \, \xi + g(\xi, Y) X $ and it follows
\bea
N_A(X, Y) &=&
g(X, Y) \, A \xi \;+\; g(\xi, Y)  A X
\;-\; g(Y, X) \, A \xi \;-\; g(\xi, X) A Y \\[1ex]
&&
\qquad - \; g(AX, Y)\, \xi \;-\; g(\xi, Y) \, AX \;+\; g(AY, X) \, \xi \;+\; g(\xi, X)\, A Y \;=\; 0 \ .
\eea
\qed

It is easy to check that special Killing $2$-tensors associated to special conformal Killing tensors as above, have non-vanishing Nijenhuis tensor, therefore the statement in \cite{schoebel1}, Prop. 6.5 is not valid for special Killing tensors.

\medskip

From Proposition \ref{system} it follows that St\"ackel tensors are characterized among trace-free tensors by the vanishing of the two operators
$P_1$ and $P_2$. It is natural to consider trace-free symmetric tensors with other
vanishing conditions. Here we mention two other cases:

\begin{elem}
For a trace-free symmetric tensor  $K \in \Gamma( \Sym^p_0 \T M)$ 
the relations $P_1(K)= 0$ and $P_3(K)=0$ hold if and only if there exists a section
$k$ of $\Sym^{p-1} \T M$ with $\nabla_XK = (X\cdot k)_0$. In this
case $k$ is uniquely determined and
$\;
\nabla_XK =  -\, \tfrac{n+2p-4}{(n+2p-2)(n+p-3)} \, (X \cdot \d K)_0
$.
\end{elem}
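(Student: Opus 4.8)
The plan is to work with the orthogonal decomposition $\nabla K = P_1(K)+P_2(K)+P_3(K)$ coming from \eqref{deco}. Since $P_1(K)$ and $P_3(K)$ are, by definition, the components of $\nabla K$ in the summands isomorphic to the Cartan piece $\Sym^{p+1}_0\T M$ and to $\Sym^{p,1}\T M$, the vanishing $P_1(K)=0=P_3(K)$ is equivalent to $\nabla K$ being, pointwise, a section of the remaining summand, i.e.\ of $\mathrm{im}(\pi_2^*)\subset\T M\otimes\Sym^p_0\T M$ (recall that $p_2=\tfrac1{c}\pi_2^*\pi_2$ projects onto exactly this copy of $\Sym^{p-1}_0\T M$). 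So the whole statement reduces to recognizing when $\nabla K$ takes values in $\mathrm{im}(\pi_2^*)$.

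The one small lemma I would prove first is: for any $k\in\Gamma(\Sym^{p-1}\T M)$, the $\Sym^p_0\T M$-valued $1$-form $X\mapsto(X\cdot k)_0$ depends only on the trace-free part $k_0$ of $k$, and it equals $\pi_2^*(k_0)$. For the first half, write $k=\sum_{i\ge0}\LL^i k_i$; since $\LL$ commutes with multiplication by $X$ (by \eqref{commu2}), each term $X\cdot\LL^i k_i=\LL^i(X\cdot k_i)$ with $i\ge1$ lies in $\mathrm{im}(\LL)$ and hence has zero trace-free part. For the second half, contract the formula $\pi_2^*(k_0)=\sum_i e_i\otimes(e_i\cdot k_0)_0$ with $X$. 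Granting this, the equation $\nabla_X K=(X\cdot k)_0$ for all $X$ is literally the equation $\nabla K=\pi_2^*(k_0)$, and the claimed equivalence with $P_1(K)=P_3(K)=0$ is immediate.

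For the explicit formula and uniqueness I would then apply $\pi_2$ to $\nabla K=\pi_2^*(k_0)$. Using $\pi_2\circ\pi_2^*=\tfrac{(n+2p-2)(n+p-3)}{n+2p-4}\,\id$ and $\pi_2(\nabla K)=-\d K$ from \eqref{pi12}, this forces $k_0=-\tfrac{n+2p-4}{(n+2p-2)(n+p-3)}\,\d K$; substituting back into $\nabla_X K=(X\cdot k_0)_0$ gives the stated expression $\nabla_X K=-\tfrac{n+2p-4}{(n+2p-2)(n+p-3)}\,(X\cdot\d K)_0$. Injectivity of $\pi_2^*$ shows $k_0$ is uniquely determined; since $(X\cdot k)_0$ only sees $k_0$, the natural reading of ``$k$ is unique'' is that there is a unique trace-free $k$ (a non-trace-free $k$ being determined only modulo $\mathrm{im}(\LL)$), namely $k=-\tfrac{n+2p-4}{(n+2p-2)(n+p-3)}\,\d K$.

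The only point requiring a little care is the bookkeeping identity $(X\cdot k)_0=\pi_2^*(k_0)(X)$ and getting the normalizing constant of $\pi_2\circ\pi_2^*$ and the sign of $\d$ right; after that the argument is formal, resting solely on this identity, on $\mathrm{im}(\pi_2^*)$ being the $\Sym^{p-1}_0$-summand, and on $\pi_2\circ\pi_2^*$ being a nonzero multiple of the identity. I do not expect any genuine obstacle.
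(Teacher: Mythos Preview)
Your argument is correct and matches the approach implicit in the paper. The paper states this lemma without proof, since it is meant to follow directly from the formula $P_2(K)(X)=-\tfrac{n+2p-4}{(n+2p-2)(n+p-3)}(X\cdot\d K)_0$ derived just before; your proof spells out precisely this, via the identification $(X\cdot k)_0=\pi_2^*(k_0)(X)$ and the decomposition $\nabla K=P_1(K)+P_2(K)+P_3(K)$. Your remark that only the trace-free part $k_0$ is genuinely determined (since $(X\cdot\LL m)_0=0$) is in fact sharper than the paper's phrasing ``$k$ is uniquely determined''.
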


It follows from \eqref{sck} for $j=0$ that the trace-free part $K_0$ of a  special conformal Killing tensor
$K$ satisfies $P_1(K_0)= 0$ and $P_3(K_0)=0$.

For $p=2$ consider a section $K_0$ of
$\Sym^2_0 \T M$ with $P_1(K_0)= 0$ and $P_3(K_0)=0$. Then there exists a $1$-form $k$ with
$
\nabla_X K_0 = (X\cdot k)_0 
= 
X \cdot k - \tfrac{1}{n+2p-4} \, \LL  (X \lrcorner \, k)
=
X \cdot k  - \tfrac{2}{n} \,  k(X)\, g$.
Substituting two tangent vectors $Y$ and $Z$ we obtain the equation
\beq\label{special1}
(\nabla_X K_0) (Y, Z) = g(X,Y)\, k(Z) \;+\; g(X, Z) \, k(Y) \;-\; \tfrac2n\, k(X)\,g(Y, Z) \ .
\eeq

For the sake of completeness, we also consider the case of trace-free symmetric tensors
$K \in \Gamma(\Sym^p_0 \T M) $ satisfying the vanishing conditions $P_2(K) = 0 = P_3(K)$. Equivalently the
covariant derivative $\nabla K$ is completely symmetric, i.e. 
$
\nabla K \in \Gamma( \Sym^{p+1}_0 \T M)
$.
This can also be written as
$
(\nabla_X K) (Y, \ldots ) = (\nabla_Y K) (X, \ldots )
$
for all tangent vectors $X$, $Y$
or as $\dd^\nabla K = 0$, where $K$ is considered as a $1$-form with values
in $\Sym^{p-1}_0 \T M$, i.e. a section of $\T ^*M \otimes \Sym^{p-1}_0 \T M$.
For $p=2$, a tensor $K$ with $\dd^\nabla K = 0$ is called a
{\it Codazzi tensor}.


\section{Examples of manifolds with Killing tensors}

Any parallel tensor is tautologically a Killing tensor. By the conformal invariance of the conformal Killing equation, 
a parallel tensor defines conformal Killing tensors for any conformally related metric. These are in general
no Killing tensors. There are several explicit  constructions of symmetric Killing tensors which we will 
describe in the following subsections.


\subsection{Killing tensors on the sphere}

Let $\mathrm{Curv}(n+1)$ denote the space of algebraic curvature tensors on $\RM^{n+1}$.  Then
any symmetric Killing $2$-tensor $K$ on $\mathbb{S}^n$ is given in a point $p\in \mathbb{S}^n$ by
$
K(X, Y) = R(X, p, p, Y)
$
for some algebraic curvature tensor $R \in \mathrm{Curv}(n+1)$. The subspace of Weyl curvature
tensors in  $\mathrm{Curv}(n+1)$ corresponds to the trace-free (and  hence divergence-free)
Killing tensors on $\mathbb{S}^n$, cf. \cite{mclena}.

The dimension of the space of Killing tensors on $\mathbb{S}^n$ gives an upper bound for this
dimension on an arbitrary Riemannian manifold \cite{thompson},  Theorem 4.7.


\subsection{Symmetric products of Killing tensors}\label{4.2}

Let $(M^n, g)$ be a Riemannian manifold with two Killing vector fields $\xi_1, \xi_2$. We define a symmetric
$2$-tensor $h$ as the symmetric product $h := \xi_1 \cdot \xi_2$. Then $h$ is a Killing tensor. Indeed
$
\dd  (\xi_1 \cdot \xi_2) = (\dd \xi_1) \cdot \xi_2 + \xi_1 \cdot (\dd\xi_2) = 0
 $
holds because of Lemma \ref{derivation}. 
More generally the symmetric product of  Killing tensors defines again a Killing tensor. Conversely, it is known
that on manifolds of constant sectional curvature, any Killing tensor can be written as a linear
combination of symmetric products of Killing vector fields, cf.  \cite{thompson},  Theorem 4.7.

\medskip

\begin{elem}\label{killing}
For any two Killing vector fields $\xi_1, \xi_2$ it holds that
$$
\d (\xi_1 \cdot \xi_2) \;=\; \dd  \, g( \xi_1, \xi_2)   \ .
$$
\end{elem}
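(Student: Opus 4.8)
The plan is to deduce the identity from the $p=2$ case already worked out in the paper rather than to compute everything from scratch. First I would observe that $K:=\xi_1\cdot\xi_2$ is a Killing $2$-tensor: by Lemma~\ref{derivation} and the Killing equations $\dd\xi_1=\dd\xi_2=0$ one has $\dd(\xi_1\cdot\xi_2)=(\dd\xi_1)\cdot\xi_2+\xi_1\cdot(\dd\xi_2)=0$, which is precisely the computation recalled just before the lemma. Next I would invoke the reformulation of the Killing condition following \eqref{2tensor}, namely that any Killing $2$-tensor $K$ satisfies $\dd\,\tr K=2\,\d K$. Finally I would compute the trace: expanding the symmetric product as in the Preliminaries gives $e_i\lrcorner\,e_i\lrcorner\,(\xi_1\cdot\xi_2)=2\,g(e_i,\xi_1)\,g(e_i,\xi_2)$, hence $\tr(\xi_1\cdot\xi_2)=\L(\xi_1\cdot\xi_2)=2\,g(\xi_1,\xi_2)$. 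Substituting into $\dd\,\tr K=2\,\d K$ and cancelling the factor $2$ yields $\dd\,g(\xi_1,\xi_2)=\d(\xi_1\cdot\xi_2)$, which is the claim.

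If one prefers a self-contained computation, I would instead evaluate both sides directly. For the left-hand side, starting from $\d(\xi_1\cdot\xi_2)=-\sum_i e_i\lrcorner\,\nabla_{e_i}(\xi_1\cdot\xi_2)$ and using the Leibniz rule for $\nabla$ together with the contraction identity $v\lrcorner\,(a\cdot b)=g(v,a)\,b+g(v,b)\,a$, the sum over the orthonormal frame splits into four terms; the two carrying the factor $\sum_i g(e_i,\nabla_{e_i}\xi_j)=\mathrm{div}\,\xi_j$ vanish because Killing fields are divergence-free, and the remaining two give $\d(\xi_1\cdot\xi_2)=-\nabla_{\xi_1}\xi_2-\nabla_{\xi_2}\xi_1$. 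For the right-hand side, $\dd\,g(\xi_1,\xi_2)=\grad\,g(\xi_1,\xi_2)$, and for an arbitrary vector field $X$ one has $g(\grad\,g(\xi_1,\xi_2),X)=g(\nabla_X\xi_1,\xi_2)+g(\xi_1,\nabla_X\xi_2)$, which by the Killing equation $g(\nabla_X\xi_j,Y)=-g(\nabla_Y\xi_j,X)$ equals $-g(\nabla_{\xi_2}\xi_1,X)-g(\nabla_{\xi_1}\xi_2,X)$; since $X$ is arbitrary, both sides agree.

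I do not expect a genuine obstacle: both arguments are short. The only point that requires care is the bookkeeping of the symmetric-product normalization $v\cdot u=v\otimes u+u\otimes v$, which is responsible for the factor $2$ in $\tr(\xi_1\cdot\xi_2)=2\,g(\xi_1,\xi_2)$ and in the contraction formula for $v\lrcorner\,(a\cdot b)$; these factors, however, enter identically on both sides and cancel. I would present the first argument as the proof proper, since it is essentially a one-line consequence of results already established in the paper.
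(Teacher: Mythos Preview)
Your proposal is correct, and your second, self-contained computation is exactly the paper's own proof: one computes $\d(\xi_1\cdot\xi_2)=-\nabla_{\xi_1}\xi_2-\nabla_{\xi_2}\xi_1$ using the Leibniz rule and divergence-freeness of Killing fields, and then checks that $X(g(\xi_1,\xi_2))$ yields the same vector via the skew-symmetry of $\nabla\xi_j$.

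Your preferred first argument, however, takes a genuinely different route. Rather than computing directly, you use that $\xi_1\cdot\xi_2$ is a Killing $2$-tensor and then invoke the relation $\dd\,\tr K=2\,\d K$ from the discussion after \eqref{2tensor}, reducing the lemma to the trace identity $\tr(\xi_1\cdot\xi_2)=2\,g(\xi_1,\xi_2)$. This is a clean and legitimate shortcut---there is no circularity, since Proposition~\ref{system} and its $p=2$ specialization precede the lemma in the paper---and it has the virtue of exhibiting the statement as an immediate instance of the general structure theory of Killing $2$-tensors. The paper's direct computation, by contrast, is independent of that theory and yields slightly more along the way: it produces the explicit formula $\d(\xi_1\cdot\xi_2)=-\nabla_{\xi_1}\xi_2-\nabla_{\xi_2}\xi_1$, which is not visible from the structural argument alone.
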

\proof
We compute $\d (\xi_1 \cdot \xi_2) = - \sum e_i \lrcorner \nabla_{e_i} (\xi_1 \cdot \xi_2)
= -\sum e_i \lrcorner ((\nabla_{e_i} \xi_1) \cdot \xi_2   + \xi_1 \cdot (\nabla_{e_i} \xi_2))$.
Since the Killing vector fields $\xi_1, \xi_2$ are divergence free we obtain
$
\d (\xi_1 \cdot \xi_2) = - \nabla_{\xi_1} \xi_2 - \nabla_{\xi_2} \xi_1
$.
Using again that $\xi_1, \xi_2$ are Killing vector field we have
$$
X(g(\xi_1, \xi_2)) \;=\; g(\nabla_X\xi_1, \xi_2) \;+ \;g(\xi_1, \nabla_X\xi_2) \;=\; -\, g(\nabla_{\xi_2} \xi_1 \,+\, \nabla_{\xi_1} \xi_2, X )
\;=\; g ( \d (\xi_1 \cdot \xi_2) , X) \ .
$$
\qed

\begin{ecor}
Let $\xi_1, \xi_2$ be two Killing vector fields with constant scalar product $g(\xi_1, \xi_2 )$. Then
$\xi_1 \cdot \xi_2$ is a divergence free Killing tensor and
$$
h \;:=\; \xi_1 \cdot \xi_2 \;-\; g( \xi_1, \xi_2 ) \, \tfrac{2}{n}\, g 
 $$
 is a trace-free, divergence-free Killing $2$-tensor.
\end{ecor}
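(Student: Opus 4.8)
The plan is to build everything on the two facts already recorded in Section~\ref{4.2} and Lemma~\ref{killing}, together with the observation that $g$ is a Levi-Civita parallel, hence Killing and divergence-free, $2$-tensor. First I would recall from Section~\ref{4.2} that $\xi_1\cdot\xi_2$ is a Killing tensor: since each $\xi_i$ is a Killing vector field we have $\dd\xi_i=0$, so the derivation property of Lemma~\ref{derivation} gives $\dd(\xi_1\cdot\xi_2)=(\dd\xi_1)\cdot\xi_2+\xi_1\cdot(\dd\xi_2)=0$. To see that $\xi_1\cdot\xi_2$ is divergence-free, I would invoke Lemma~\ref{killing}, which yields $\d(\xi_1\cdot\xi_2)=\dd\,g(\xi_1,\xi_2)$; by hypothesis $g(\xi_1,\xi_2)$ is constant, so $\dd\,g(\xi_1,\xi_2)=0$ and hence $\d(\xi_1\cdot\xi_2)=0$.

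For the statement about $h$, I would note that $h$ is obtained from $\xi_1\cdot\xi_2$ by subtracting $\tfrac2n g(\xi_1,\xi_2)\,g$, a constant multiple of the parallel tensor $g$. Since $\nabla g=0$ one has $\dd g=\sum e_i\cdot\nabla_{e_i}g=0$ and $\d g=-\sum e_i\lrcorner\nabla_{e_i}g=0$, and the constant factor $g(\xi_1,\xi_2)$ passes through both operators; therefore $\tfrac2n g(\xi_1,\xi_2)\,g$ is itself Killing and divergence-free. Combining this with the fact, just established, that $\xi_1\cdot\xi_2$ is Killing and divergence-free, it follows that $h$ shares both properties. It then remains to check that $h$ is trace-free: from $\xi_1\cdot\xi_2=\xi_1\otimes\xi_2+\xi_2\otimes\xi_1$ one gets $\tr(\xi_1\cdot\xi_2)=\L(\xi_1\cdot\xi_2)=2g(\xi_1,\xi_2)$, while $\L g=n$ (equivalently $\tr g=n$) under the conventions of Section~2; hence $\tr h=2g(\xi_1,\xi_2)-\tfrac2n g(\xi_1,\xi_2)\cdot n=0$.

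There is no genuine obstacle here: the result is a direct consequence of Lemma~\ref{killing} and the fact that a constant multiple of $g$ is parallel. The only points requiring a little care are the normalization constants — that $\L g=n$ under the present conventions, so that $\tfrac2n$ is exactly the coefficient making $h$ trace-free, and that both $\dd$ and $\d$ annihilate constant multiples of $g$ — but these are immediate from the algebraic identities collected at the beginning of Section~2.
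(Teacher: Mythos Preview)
Your proof is correct and follows essentially the same approach as the paper: you use Section~\ref{4.2} and Lemma~\ref{killing} to get that $\xi_1\cdot\xi_2$ is Killing and divergence-free, then observe that the correction term is a constant multiple of the parallel tensor $g$, and finally compute the trace. The paper's own proof is much terser (it only records the trace computation and asserts the rest), so your version simply makes explicit the steps the paper leaves implicit.
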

\proof
Indeed the trace of the symmetric endomorphism
$h = \xi_1 \cdot \xi_2 \in \Gamma(\Sym^2 \T M)$ is given as $\tr (h) = 2 \,g(\xi_1, \xi_2 )$. 
Hence the tensor $h$ defined as above is a trace-free and divergence-free Killing tensor.
\qed

\begin{exe}
Let $(M^n, g, \xi)$ be a Sasakian manifold. Then $\xi \cdot \xi - \frac{2}{n} g$ is a trace-free Killing $2$-tensor. On a $3$-Sasakian manifold one has three pairwise orthogonal
Killing vector fields of unit length defining  a six-dimensional space of trace-free Killing $2$-tensors.
\end{exe}

\begin{exe}
On spheres $\SM^n$ with $n\ge 3$ one has pairs of orthogonal Killing vector fields,
defining trace-free Killing $2$-tensors. Indeed, every pair of anti-commuting skew-symmetric matrices of dimension $n+1$ defines a pair of orthogonal Killing vector fields on $\SM^n$.
\end{exe}


\subsection{Killing tensors from Killing forms}\label{4.3}

There is a well-known relation between Killing forms and Killing tensors (e.g. cf. \cite{carter}).
Let $u \in \Omega^p(M)$ be  a Killing form, i.e. a $p$-form satisfying the equation $X \,\lrcorner\, \nabla_X u =0$ for any
tangent vector $X$. We define a symmetric bilinear form $K^u$ by $K^u(X, Y) = g(X \lr u, Y \lr u) $. Then $K^u$ is a symmetric
Killing $2$-tensor (for $p=2$ this fact was also remarked in \cite{andrei15}, Rem. 2.1). Indeed it suffices to show $(\nabla_X K^u)(X,X)=0$ for any tangent vector $X$, which is immediate:
$$
(\nabla_XK^u)(X,X) \;=\; \nabla_X(K^u(X,X)) \;-\; 2\, K^u(\nabla_XX, X) \;=\; 2\, g(X \lr \nabla_Xu, X \lr u) \;=\; 0 \ .
$$
Since Killing $1$-forms are dual to Killing vector fields, this construction generalizes the the one described in Section \ref{4.2}.
If $u$ is a Killing $2$-form considered as skew-symmetric endomorphism, then the associated symmetric 
Killing tensor $K^u$ is just $-u^2$. In this case $K^u$ commutes with the 
Ricci tensor, since the same is true for the skew-symmetric endomorphisms corresponding to Killing $2$-forms, cf. \cite{carter}.
Examples of manifolds with Killing forms are: the standard sphere $\mathbb{S}^n$, Sasakian, 3-Sasakian, nearly 
K\"ahler
or weak $\mathrm{G}_2$ manifolds \cite{uwe}. 

A related construction appears in the work of 
V. Apostolov, D.M.J. Calderbank and P. Gauduchon \cite{acg13}, in particular Appendix B.4. They prove a 1--1 relation between
symmetric $J$- invariant Killing $2$-tensors on K\"ahler surfaces and Hamiltonian $2$-forms. In contrast to
Killing forms there are many examples known of Hamiltonian $2$-forms, thus providing a rich source of 
symmetric Killing tensors.


\subsection{The Ricci curvature as Killing tensor}

Special examples of Killing tensors arise as Ricci curvature of a Riemannian metric. Notice that if $\Ric$ is a Killing tensor then the scalar curvature $\scal$ is constant. Riemannian manifolds whose Ricci tensor is Killing were studied in 
\cite{besse} Ch. 16.G, as a class of generalized Einstein manifolds. In the same context this condition was originally 
discussed by A. Gray in \cite{gray78}. It can be  shown shown that all D'Atri spaces, i.e. Riemannian manifolds whose local geodesic symmetries preserve, up to a sign, the volume element,  have Killing Ricci tensor
 (cf.  \cite{besse}, \cite{datri}). This provides a wide class of examples, many of
 them with non-parallel Ricci tensor. In particular naturally reductive spaces are D'Atri, and thus have 
 Killing Ricci tensor. Here we want to present a direct argument.

\begin{epr}
The Ricci curvature of a naturally reductive space is a Killing tensor.
\end{epr}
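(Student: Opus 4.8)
The plan is to exploit the canonical connection of the naturally reductive structure, with respect to which every invariant tensor is parallel. Write $M = G/H$ with reductive decomposition $\gg = \hh\oplus\mm$, and let $\nb$ denote the canonical connection, i.e. the unique $G$-invariant metric connection whose torsion $\bar T$ and curvature are $\nb$-parallel. First I would recall two standard facts about naturally reductive spaces (cf. \cite{besse}): every $G$-invariant tensor field on $M$ is $\nb$-parallel, and the difference tensor $A := \n - \nb$ between the Levi-Civita and the canonical connection is given by $A_X Y = \tfrac12\,\bar T(X,Y)$, the naturally reductive condition being precisely the statement that $g(A_X Y,Z)$ is totally skew-symmetric in $X,Y,Z$. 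In particular $A_X X = 0$ for every vector field $X$.

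Next, since the Ricci tensor $\Ric$ of the homogeneous metric is $G$-invariant, it is $\nb$-parallel, so $\nb\Ric = 0$. As $A$ is a tensor field, this gives, for arbitrary vector fields $X,Y,Z$,
\[
(\n_X\Ric)(Y,Z) \;=\; (\nb_X\Ric)(Y,Z)\;-\;\Ric(A_XY,Z)\;-\;\Ric(Y,A_XZ)\;=\;-\,\Ric(A_XY,Z)\;-\;\Ric(A_XZ,Y)\ .
\]
Setting $Y = Z = X$ and using $A_X X = 0$ yields $(\n_X\Ric)(X,X) = 0$ for all $X$. By polarization the complete symmetrization of $\n\Ric$ vanishes, so by the lemma characterizing Killing tensors through this condition we conclude $\dd\Ric = 0$, i.e. $\Ric$ is a Killing tensor.

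I do not expect a genuine obstacle here: the only input that is not purely formal is the identification of the difference tensor $A$ together with the fact that the naturally reductive hypothesis makes $g(A_XY,Z)$ totally skew-symmetric, from which $A_XX = 0$ is immediate; everything else is bookkeeping. It is worth noting that the very same argument proves more, namely that \emph{every} $G$-invariant symmetric $p$-tensor $K$ on a naturally reductive space is a Killing tensor: from $\nb K = 0$ and $A_XX = 0$ one gets $(\n_XK)(X,\ldots,X) = -\,p\,K(A_XX,X,\ldots,X) = 0$.
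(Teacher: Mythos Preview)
Your argument is correct and is in fact cleaner than the paper's. Both proofs rest on the canonical (Ambrose--Singer) connection $\nb$ and the total skew-symmetry of the torsion, which gives $A_XX=0$ for the difference tensor $A=\n-\nb$. The paper, however, works one level deeper: it uses that the full Levi-Civita curvature $R$ is $\nb$-parallel (this is what the relation ``$\nb\bar R=0$ can be rewritten as\ldots'' encodes, since $R$ is expressible in terms of the $\nb$-parallel data $\bar R$ and $T$), derives the formula $(\n_XR)_{Y,Z}=[T_X,R_{Y,Z}]-R_{T_XY,Z}-R_{Y,T_XZ}$, and then computes $\sum_i g((\n_XR)_{X,e_i}e_i,X)$ term by term, invoking the first Bianchi identity along the way. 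You bypass all of this by observing directly that $\Ric$, being $G$-invariant, is already $\nb$-parallel, so $(\n_X\Ric)(X,X)=-2\Ric(A_XX,X)=0$ follows in one line. Your route is shorter, avoids the curvature computation entirely, and---as you point out---immediately yields the stronger statement that \emph{every} $G$-invariant symmetric $p$-tensor on a naturally reductive space is Killing. The paper's approach, by contrast, makes the role of the curvature and torsion identities more explicit but does not obviously suggest this generalization.
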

\proof
Naturally reductive spaces are characterized by the existence of a metric connection $\bar \nabla$ with  
skew-symmetric, $\bar\nabla$-parallel torsion $T$ and parallel curvature $\bar R$. This gives in particular
the following equations
$$
g(T_XY, Z) + g(Y,T_XZ) = 0
\qquad\mbox{and}\qquad
T_XY + T_YX =0 \ .
$$
The condition $\bar\nabla \bar R = 0$ can be rewritten as the following equation for the Riemannian curvature $R$
$$
(\nabla_XR)_{Y,Z} \;=\; [T_X, R_{Y,Z}]  \;-\; R_{T_XY, Z} \;-\; R_{Y, T_XZ} \ .
$$
The  Ricci curvature is defined as $\Ric(X,Y) = \sum g(R_{X, e_i}e_i, Y )$, where
$\{e_i\}$ is an orthonormal frame. Its covariant derivative is given by
$$
(\nabla_Z \Ric)(X, Y)
\;=\; \sum g((\nabla_ZR)_{X, e_i} e_i, Y) \ .
$$
The  Ricci curvature $\Ric \in \Gamma(\Sym^2 \T M)$ is a Killing tensor if and only if 
$(\nabla_X \Ric) (X,X)=0$ for all tangent vectors $X$, which is equivalent to
$$
\sum g((\nabla_X R)_{X, e_i} e_i, X) \;=\; 0 \ .
$$ 
If $R$ is the Riemannian curvature tensor of naturally reductive metric this curvature
expression can be rewritten as
\bea
\sum g((\nabla_X R)_{X, e_i} e_i, X) 
&=&
\sum 
g([\,T_X, R_{X, e_i} ] \,e_i \,-\, R_{T_XX, e_i} e_i \,-\, R_{X, T_Xe_i}e_i, \, X) \\[1ex]
&=&
\sum
g(T_X \, R_{X, e_i} e_i, \, X) \;-\; g(R_{X, e_i} \, T_Xe_i, \, X)
\;-\;g ( R_{X, T_Xe_i} e_i, \, X)\\[1ex]
&=&
\sum
- g(R_{X, e_i} e_i, T_XX) \;-\; 2\,  g(R_{X, e_i} \, T_Xe_i, \, X)\\[1ex]
&=&
0
\eea
Here we used  $T_XX=0$ and the equation $g(R_{X, e_i} \, T_Xe_i, \, X) = 0$, which holds because of 
\bea
g(R_{X, e_i} \, T_Xe_i, \, X) &=& -\, g(R_{e_i, T_X e_i} \, X, \, X) \;-\; g(R_{T_X e_i , X} e_i , X)
\;=\; g(R_{X, T_Xe_i}e_i, \, X) \\[1ex]
&=&
-\,g(R_{X, e_i} \, T_Xe_i, \, X) \ .
\eea
\qed

\bigskip

\noindent
We define a modified Ricci tensor $\widetilde \Ric \in \Sym^2 \T M$ as 
$\;
\widetilde  \Ric := \Ric - \tfrac{2 \, \scal}{n+2}\,\Id \ .
$
If $ \Ric$ is a Killing tensor then the same is true for $ \widetilde \Ric$, but not conversely.

\begin{elem}
The modified Ricci tensor $\widetilde \Ric$ is Killing tensor if and only if it is a
conformal Killing tensor.
Moreover $\widetilde \Ric $ is Killing if and only if
$\;
(\nabla_X \Ric)(X, X) = \tfrac{2}{n+2}\, X(\scal)\, g(X,X) \;
$
for   all vector  fields $X$.
\end{elem}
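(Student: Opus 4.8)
The plan is to combine the explicit description of Killing and conformal Killing $2$-tensors from Example~\eqref{2tensor} with the contracted second Bianchi identity for $\Ric$.

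First I would write down the standard decomposition of the modified Ricci tensor. Since $\Lambda\Ric=\scal$ we have $\Ric=\Ric_0+\tfrac{\scal}{n}g$ with $\Ric_0\in\Gamma(\Sym^2_0\T M)$, hence
\[
\widetilde\Ric \;=\; \Ric_0 \;+\; 2\tilde f\,g,\qquad \tilde f:=\tfrac{2-n}{2n(n+2)}\,\scal,
\]
so that in the notation of Example~\eqref{2tensor} the components of $\widetilde\Ric$ are $K_0=\Ric_0$ and $f=\tilde f$. That example then says that $\widetilde\Ric$ is a Killing tensor if and only if the two equations
\[
\dd\Ric_0=-\tfrac{1}{n+2}\LL\,\delta\Ric_0,\qquad \dd\tilde f=\tfrac{1}{n+2}\,\delta\Ric_0
\]
hold, while by Lemma~\ref{conf} the tensor $\widetilde\Ric$ is a conformal Killing tensor if and only if only the first of these two equations holds. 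Thus the first assertion of the lemma is equivalent to the claim that the second equation is automatically satisfied by $\widetilde\Ric$.

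The next -- and essentially only -- step is to verify that claim. For this I would invoke the contracted second Bianchi identity, which with the sign convention $\delta=-\sum e_i\lrcorner\nabla_{e_i}$ of this paper reads $\delta\Ric=-\tfrac12\dd\scal$. Combining this with the elementary identity $\delta(\psi g)=-\dd\psi$ for a function $\psi$ (which holds because $\nabla g=0$ and $e_i\lrcorner g=e_i$) one gets $\delta\Ric=\delta\Ric_0+\tfrac1n\delta(\scal\,g)=\delta\Ric_0-\tfrac1n\dd\scal$, hence $\delta\Ric_0=\bigl(\tfrac1n-\tfrac12\bigr)\dd\scal=\tfrac{2-n}{2n}\dd\scal$. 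Consequently $\tfrac{1}{n+2}\delta\Ric_0=\tfrac{2-n}{2n(n+2)}\dd\scal=\dd\tilde f$, which is precisely the required second equation. Therefore $\widetilde\Ric$ is a Killing tensor if and only if it is a conformal Killing tensor.

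For the last statement I would use that, by the characterization of Killing tensors through the vanishing of the complete symmetrization of the covariant derivative (together with polarization), $\widetilde\Ric$ is a Killing tensor precisely when $(\nabla_X\widetilde\Ric)(X,X)=0$ for every vector field $X$. Since $\nabla g=0$ this expands to $(\nabla_X\widetilde\Ric)(X,X)=(\nabla_X\Ric)(X,X)-\tfrac{2}{n+2}X(\scal)\,g(X,X)$, and the stated equivalence follows at once. The argument has no genuine analytic or geometric obstacle; the only point that requires care is keeping the various normalization constants ($\LL 1=2g$, $\delta(\psi g)=-\dd\psi$, $\Ric=\Ric_0+\tfrac{\scal}{n}g$) and the sign in the contracted Bianchi identity consistent, after which everything reduces to the one-line numerical coincidence $\tfrac1{n+2}\bigl(\tfrac1n-\tfrac12\bigr)=\tfrac{2-n}{2n(n+2)}$ used above.
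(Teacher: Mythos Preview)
Your proof is correct and follows essentially the same route as the paper: both arguments use the characterization in Example~\eqref{2tensor} together with the contracted Bianchi identity $\delta\Ric=-\tfrac12\,\dd\scal$ to show that the ``second'' Killing equation for $\widetilde\Ric$ is automatic. The only cosmetic difference is that you decompose $\widetilde\Ric$ into its trace-free part $\Ric_0$ and trace part $\tilde f$ and verify the equation in the form $\dd\tilde f=\tfrac{1}{n+2}\,\delta\Ric_0$, whereas the paper works directly with $\widetilde\Ric$ and its trace and checks the equivalent form $\dd\,\tr(\widetilde\Ric)=2\,\delta\,\widetilde\Ric$.
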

\proof
A symmetric $2$-tensor is a Killing tensor if and only if the two equations of \eqref{2tensor} are
satisfied. The first equation characterizes conformal Killing $2$-tensors. Hence we only have
to show that the second equations holds for $\widetilde \Ric$, i.e. we have to show that
$\dd \, \tr (\widetilde \Ric) = 2 \,\d\, \widetilde \Ric$. Since $\, \tr (\widetilde \Ric) = \frac{2-n}{\;n+2} \, \scal \,$,
the well-known relation 
$\d \, \Ric  = - \frac12 \, \dd  \, \scal $ implies
$$
\d \, \widetilde \Ric = -\tfrac12 \, \dd  \, \scal + \tfrac{2}{n+2}\,\dd  \, \scal = \tfrac{-n + 2}{\;2(n+2)}\, \dd  \, \scal
,\qquad
\dd \, \tr (\widetilde \Ric ) = \tfrac{2-n}{n+2} \, \dd  \, \scal \ .
$$
Hence $\dd  \, \tr (\widetilde \Ric) = 2 \,\d\, \widetilde \Ric$ and the modified Ricci tensor $\widetilde \Ric$
is a Killing tensor if it is a conformal Killing tensor. The other direction and the equation for $\Ric$ are obvious.
\qed

\begin{ere}
A. Gray introduced  in \cite{gray78} the notation $\mathcal A$, for  the class of Riemannian manifolds with Killing Ricci tensor,
and $\mathcal C$ for the class of Riemannian manifolds  with constant scalar curvature.
The class of Riemannian manifolds whose modified Ricci tensor is Killing was studied by W. Jelonek in \cite{jelonek99} under the name $\mathcal A \oplus \mathcal C^\perp$. Finally we note that there are manifolds with Killing
Ricci tensors which are neither homogeneous nor D'Atri. Examples were constructed by H. Pedersen and  P. Tod
in \cite{tod} and by W. Jelonek in \cite{jelonek98}.
\end{ere}


\section{Conformal Killing tensors on Riemannian products}

Let $(M_1,g_1)$ and $(M_2,g_2)$ be two compact Riemannian manifolds. The aim of this section is to prove the following result, which reduces the study of conformal Killing 2-tensors on Riemannian products $M:=M_1\times M_2$ to that of Killing tensors on the factors. 

\begin{ath}\label{product}
Let $h\in \Gamma( \Sym^2_0(\T M))$ be a trace-free conformal Killing tensor. Then there exist Killing tensors $K_i\in  \Gamma(\Sym^2(\T M_i)),
\, i=1,2$, and Killing vector fields $\xi_1,\ldots ,\xi_k$ on $M_1$ and $\zeta_1,\ldots,\zeta_k$ on $M_2$ such that 
$$h=(K_1+K_2)_0+\sum_{i=1}^k\xi_i\cdot\zeta_i \ .$$
Conversely, every such tensor on $M$ is a trace-free  conformal Killing tensor.
\end{ath}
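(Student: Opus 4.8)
The converse is easy and I treat it first. Pull-backs to $M$ of Killing tensors and Killing vector fields from the factors remain Killing on $M$, since the product Levi--Civita connection preserves the sub-bundles $T_1:=\pi_1^*\T M_1$ and $T_2:=\pi_2^*\T M_2$ and restricts to the connection of $M_i$ in the $M_i$-directions, so $\dd^M$ of such a pull-back is the pull-back of $\dd^{M_i}(\,\cdot\,)=0$. Hence $K_1+K_2$ is Killing on $M$, and $(K_1+K_2)_0$ is conformal Killing (a tensor is conformal Killing iff its trace-free part is). By Lemma~\ref{derivation}, $\dd^M(\xi_i\cdot\zeta_i)=(\dd^M\xi_i)\cdot\zeta_i+\xi_i\cdot(\dd^M\zeta_i)=0$, so $\xi_i\cdot\zeta_i$ is Killing on $M$, and it is trace-free because $\xi_i\perp\zeta_i$ forces $\tr(\xi_i\cdot\zeta_i)=2g(\xi_i,\zeta_i)=0$. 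By linearity of the conformal Killing equation, the stated tensor is a trace-free conformal Killing tensor.

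For the direct statement, I would decompose $h=A+B+C$ along $\T M=T_1\oplus T_2$, with $A\in\Gamma(\Sym^2T_1)$, $C\in\Gamma(\Sym^2T_2)$ and $B$ the mixed block, a section of $T_1\otimes T_2$; then $\tr_{g_1}A+\tr_{g_2}C=0$. By Lemma~\ref{conf} the equation for $h$ becomes the pointwise identity $(\nabla_Xh)(X,X)=|X|^2\,\eta(X)$ for all $X$, with $\eta=-\tfrac{2}{n+2}\,\d h$ and $n=\dim M$. Evaluating it on triples of vectors each tangent to $T_1$ or to $T_2$ produces four kinds of equations: the all-$T_1$ and all-$T_2$ ones say that the restrictions of $A$ and $C$ to the slices $M_1\times\{y\}$ and $\{x\}\times M_2$ are conformal Killing $2$-tensors on the factors; the mixed ones couple the transverse derivatives $\nabla^{M_2}\!A$, $\nabla^{M_1}\!C$, the covariant derivatives $\nabla^{M_1}\!B$, $\nabla^{M_2}\!B$, and the $T_1$- and $T_2$-blocks $\eta_1,\eta_2$ of $\eta$ (which are, essentially, the factor-divergences of the blocks of $h$).

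The core of the proof is to resolve this coupled system using the compactness of the factors. A spectral analysis (expanding the $M_i$-dependence in eigensections of the Laplacian of $M_i$) together with integration by parts on the mixed equations should force the mixed block to separate, $B=\sum_i\xi_i\cdot\zeta_i$ with each $\xi_i$ (resp.\ $\zeta_i$) a Killing vector field on $M_1$ (resp.\ $M_2$) -- compactness being needed to upgrade conformal Killing vector fields on the factors to Killing ones. Once $B$ has this form, divergence-freeness of the $\xi_i,\zeta_i$ collapses several terms in the mixed equations, leaving $\nabla_V A=\eta_2(V)\,g_1$ for $V$ tangent to $M_2$ and $\nabla_U C=\eta_1(U)\,g_2$ for $U$ tangent to $M_1$; since $\eta_1,\eta_2$ turn out to be exact $1$-forms, integrating along the factors yields $A=A_1(x)+\varphi(y)\,g_1$ and $C=C_1(y)+\psi(x)\,g_2$ with $A_1$, $C_1$ conformal Killing on $M_1$, $M_2$. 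The trace-free relation, combined with the above, then gives a pointwise proportionality $\d^{M_1}\!A_1=\tfrac{n+2}{2n_2}\,\dd\tr A_1$ (with $n_2=\dim M_2$); pairing it with $\dd\tr A_1$ and integrating over the compact $M_1$ (integrating by parts twice, using that $\d^{M_1}$ is the $L^2$-adjoint of $\dd^{M_1}$) forces $\dd\tr A_1\equiv0$, hence $\d^{M_1}\!A_1=0$, hence $A_1$ is a Killing tensor on $M_1$ by the criterion~\eqref{2tensor}; symmetrically $C_1$ is Killing on $M_2$. Then $\tr A_1,\tr C_1,\varphi,\psi$ are all constant, and a short linear-algebra check identifies the metric terms $\varphi g_1,\psi g_2$ with what the trace-free projection of $K_1+K_2$ produces, where $K_1:=A_1+c_1g_1$, $K_2:=C_1+c_2g_2$ for suitable constants $c_1,c_2$; this gives $h=(K_1+K_2)_0+\sum_i\xi_i\cdot\zeta_i$.

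The main obstacle is exactly the disentangling step: the blocks $A,B,C$ are coupled to one another and to $\d h=\d A+\d B+\d C$, so none of them can be analysed in isolation, and the bookkeeping of the eigensection expansions and integrations by parts must be organised carefully. Compactness of \emph{both} factors is indispensable -- both to upgrade ``conformal Killing on a factor'' to ``Killing on a factor'' (this is where the dimensions $n_1,n_2$ enter and one must check that the numerical coefficients cooperate, as in the proportionality above) and to discard zero modes in the spectral arguments.
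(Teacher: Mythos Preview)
Your converse argument and the block decomposition $h=A+B+C$ are fine and match the paper's setup. The two substantive steps, however, both contain genuine gaps.

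\textbf{The mixed block.} Your treatment of $B$ is only a heuristic. The assertion that ``compactness upgrades conformal Killing vector fields on the factors to Killing ones'' is false as stated (the round sphere has non-Killing conformal vector fields), and ``spectral analysis should force the separation'' is not an argument. The paper proceeds quite differently: from the projected system it derives, after eliminating the trace functions, the single identity
\[
(n_1+2)\,\delta_2\dd_2\phi+(n_2+2)\,\delta_1\dd_1\phi+(n_2+2)\,\dd_1\delta_1\phi+(n_1+2)\,\dd_2\delta_2\phi=0,
\]
and one $L^2$-pairing with $\phi$ gives $\dd_1\phi=\dd_2\phi=\delta_1\phi=\delta_2\phi=0$ directly (so $\phi$ is Killing, not merely conformal Killing, in each direction). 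The decomposition $\phi=\sum_i\xi_i\cdot\zeta_i$ is then obtained by a prolongation argument: $\phi$ extends to a parallel section of a bundle $E=\pi_1^*E_1\otimes\pi_2^*E_2$ for the tensor product of the two Killing connections, and parallel sections of a tensor product are sums of tensor products of parallel sections.

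\textbf{The diagonal blocks.} Your claim that the relation $\delta^{M_1}A_1=\tfrac{n+2}{2n_2}\,\dd\tr A_1$, together with conformal Killing, forces $\dd\tr A_1\equiv0$ via integration by parts is incorrect. Here is a counterexample: take any Killing $2$-tensor $K_1$ on $M_1$ with non-constant trace (e.g.\ $K_1=\xi\cdot\xi$ for a Killing field $\xi$ of non-constant length) and set $A_1:=K_1-\tfrac{1}{n}(\tr K_1)\,g_1$. Then $\dd A_1=-\tfrac{1}{2n}\LL_1\,\dd\tr K_1$, so $A_1$ is conformal Killing; a two-line computation gives $\delta A_1=\tfrac{n+2}{2n}\,\dd\tr K_1$ and $\dd\tr A_1=\tfrac{n_2}{n}\,\dd\tr K_1$, hence $\delta A_1=\tfrac{n+2}{2n_2}\,\dd\tr A_1$ exactly, yet $\dd\tr A_1\ne0$. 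Thus $A_1$ need not be Killing and its trace need not be constant. The paper avoids this by first showing $\dd_1\dd_2f_2=0$, so $f_2=\varphi_1(x)+\varphi_2(y)$, and then \emph{defining} $K_1:=h_1+\tfrac{1}{2n_1}\LL_1\varphi_2-\tfrac{1}{2n_2}\LL_1\varphi_1$; the system immediately gives $\dd_1K_1=\dd_2K_1=0$, so $K_1$ is (the pull-back of) a Killing tensor on $M_1$, with no appeal to an integration-by-parts trick.

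In short, your outline has the right architecture but the two load-bearing steps fail as written; both are handled in the paper by direct algebraic manipulations of the projected system rather than by spectral or sign arguments.
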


\begin{proof} We denote by $n$, $n_1$ and $n_2$ the dimensions of $M$, $M_1$ and $M_2$.
Consider the natural decomposition $h=h_1+h_2+\phi$, where $h_1$, $h_2$ and $\phi$ are sections of $\pi_1^*\Sym^2(\T M_1)$, $\pi_2^*\Sym^2(\T M_2)$, and $\pi_1^*\T M_1\otimes\pi_2^*\T M_2$ respectively. We consider the lifts to $M$ of the operators $\dd_i,\ \delta_i,\ \LL_i,\ \Lambda_i$ on the factors. Clearly two such operators commute if they have different subscripts, and satisfy the relations  \eqref{commu} and \eqref{commu3} if they have the same subscript.
We define $f_i:=\Lambda_i(h_i)$. Since $h$ is trace-free, we have $f_1+f_2=0$. The conformal Killing equation 
$$\dd h=-\frac1{n+2}\LL \delta h$$ 
reads
\begin{eqnarray*}\dd_1h_1+\dd _1h_2+\dd _1\phi+\dd _2h_1+\dd _2h_2+\dd _2\phi&=&-\frac1{n+2}\LL _1(\delta_1 h_1+\delta_1\phi+\delta_2 h_2+\delta_2\phi)\\
&&-\frac1{n+2}\LL _2(\delta_1 h_1+\delta_1\phi+\delta_2 h_2+\delta_2\phi)\ .
\end{eqnarray*}
Projecting this equation onto the different summands of $\Sym^3(\T M)$ yields the following system:
\begin{equation}\label{sys}
\begin{cases}
\dd _1h_1&=-\frac1{n+2}\LL _1(\delta_1 h_1+\delta_2\phi)\\
\dd _2h_2&=-\frac1{n+2}\LL _2(\delta_2 h_2+\delta_1\phi)\\
\dd _1h_2+\dd _2\phi&=-\frac1{n+2}\LL _2(\delta_1 h_1+\delta_2\phi)\\
\dd _2h_1+\dd _1\phi&=-\frac1{n+2}\LL _1(\delta_2 h_2+\delta_1\phi)
\end{cases}
\end{equation}
Applying $\Lambda_1$ to the first equation of \eqref{sys} and using \eqref{commu3} gives
$$-2\delta_1h_1+\dd _1f_1=-\frac{2(n_1+2)}{n+2}(\delta_1h_1+\delta_2\phi)\ ,$$
whence 
\begin{equation}\label{d2phi2} \delta_1h_1=\frac{n_1+2}{n_2}\delta_2\phi+\frac{n+2}{2n_2}\dd _1f_1\ .
\end{equation}
Similarly, applying $\Lambda_2$ to the second equation of \eqref{sys} gives
\begin{equation}\label{d1phi2} \delta_2h_2=\frac{n_2+2}{n_1}\delta_1\phi+\frac{n+2}{2n_1}\dd _2f_2\ .
\end{equation}
Replacing $\delta_i h_i$ in the right hand side of \eqref{sys} using \eqref{d2phi2} and \eqref{d1phi2}, yields
\begin{equation}\label{sys2}
\begin{cases}
\dd _1h_1&=-\frac1{2n_2}\LL _1(2\delta_2 \phi+\dd _1f_1)\\
\dd _2h_2&=-\frac1{2n_1}\LL _2(2\delta_1 \phi+\dd _2f_2)\\
\dd _1h_2+\dd _2\phi&=-\frac1{2n_2}\LL _2(2\delta_2 \phi+\dd _1f_1)\\
\dd _2h_1+\dd _1\phi&=-\frac1{2n_1}\LL _1(2\delta_1 \phi+\dd _2f_2)
\end{cases}
\end{equation}
We now apply $\delta_2$ to the third equation of \eqref{sys2} and use \eqref{commu3} and \eqref{d1phi2} together with the fact that $f_2=-f_1$ to compute:
\bea \delta_2\dd_2\phi&=&-\delta_2\dd _1h_2+\frac1{n_2}\dd _2(2\delta_2 \phi+\dd _1f_1)\\
&=&-\dd _1\left(\frac{n_2+2}{n_1}\delta_1\phi+\frac{n+2}{2n_1}\dd _2f_2\right)+\frac2{n_2}\dd _2\delta_2 \phi+\frac1{n_2}\dd _2\dd _1f_1\\
&=&-\frac{n_2+2}{n_1}\dd _1\delta_1\phi+\frac2{n_2}\dd _2\delta_2 \phi+\frac{n(n_2+2)}{2n_1n_2}\dd _2\dd _1f_1\ .
\eea
Similarly, applying $\delta_1$ to the fourth equation of \eqref{sys2} and using \eqref{commu3} and \eqref{d2phi2} yields:
\begin{equation}\label{dd}\delta_1\dd_1\phi=-\frac{n_1+2}{n_2}\dd_2\delta_2\phi+\frac2{n_1}\dd_1\delta_1 \phi+\frac{n(n_1+2)}{2n_1n_2}\dd _2\dd _1f_2\ .\end{equation}
In order to eliminate the terms involving $f_1$ and $f_2$ in these last two formulas, we multiply the first one with $n_1+2$ and add it to the second one multiplied with $n_2+2$, which yields
$$(n_1+2)\delta_2\dd_2\phi+(n_2+2)\delta_1\dd_1\phi+(n_2+2)\dd_1\delta_1 \phi+(n_1+2)\dd_2\delta_2 \phi=0\ ,$$
which after a scalar product with $\phi$ and integration over $M$ gives $\dd_1\phi=\dd_2\phi=0$ and $\delta_1\phi=\delta_2\phi=0$. Plugging this back into \eqref{dd} also shows that $\dd_1\dd_2 f_2=0$. In other words, there exist functions $\f_i\in C^\infty(M_i)$ such that $f_2=\f_1+\f_2$, and correspondingly 
$f_1=-\f_1-\f_2$ (we identify here $\f_i$ with their pullbacks to $M$ in order to simplify the notations). 

The system \eqref{sys2} thus becomes:

\begin{equation}\label{sys3}
\begin{cases}
\dd _1h_1&=-\frac1{2n_2}\LL _1\dd _1f_1=\frac1{2n_2}\LL _1\dd _1\f_1\\
\dd _2h_2&=-\frac1{2n_1}\LL _2\dd _2f_2=-\frac1{2n_1}\LL _2\dd _2\f_2\\
\dd _1h_2&=-\frac1{2n_2}\LL _2\dd _1f_1=\frac1{2n_2}\LL _2\dd _1\f_1\\
\dd _2h_1&=-\frac1{2n_1}\LL _1\dd _2f_2=-\frac1{2n_1}\LL _1\dd _2\f_2
\end{cases}
\end{equation}

This system shows that the tensors $$K_1:=h_1+\frac1{2n_1}\LL _1\f_2-\frac1{2n_2}\LL _1\f_1\qquad \hbox{and}\qquad K_2:=h_2+\frac1{2n_1}\LL _2\f_2-\frac1{2n_2}\LL _2\f_1$$ verify $\dd_1 K_2=\dd_2 K_1=0$ and  $\dd_1 K_1=\dd_2 K_2=0$. The first two equations show that $K_1$ and $K_2$ are pull-backs of symmetric tensors on $M_1$ and $M_2$, and the last two equations show that these tensors are Killing tensors of the respective factors $M_1$ and $M_2$. Moreover, we have 
$$K_1+K_2=h_1+h_2+\LL\left(\frac1{2n_1}\f_2-\frac1{2n_2}\f_1\right)$$
and thus $(K_1+K_2)_0=h_1+h_2$.

Finally, we claim that $\dd _1\phi=0$ and $\dd _2\phi=0$ imply that $\phi$ has the form stated in the theorem. Let $T_1:=\pi_1^*(\T M_1)$ and $T_2:=\pi_2^*(\T M_2)$ denote the pull-backs on $M$ of the tangent bundles of the factors. Then $\phi$ is a section of $T_1\otimes T_2$, and $\dd _1\phi=0$ and $\dd _2\phi=0$ imply that $\nabla\phi=\nabla^{M_1}\phi+\nabla^{M_2}\phi$ is a section of $\Lambda^2T_1\otimes T_2+T_1\otimes\Lambda^2 T_2$. In some sense, one can view $\phi$ as a Killing vector field on $M_1$ twisted with $T_2$, and also as a Killing vector field on $M_2$ twisted with $T_1$. Let us denote by $\phi_1:=\nabla^{M_1}\phi \in\Gamma(\Lambda^2T_1\otimes T_2)$, $\phi_2:=\nabla^{M_2}\phi \in\Gamma(T_1\otimes\Lambda^2 T_2)$ and $\phi_3:=\nabla^{M_1}\nabla^{M_2}\phi=\nabla^{M_2}\nabla^{M_1}\phi\in \Gamma(\Lambda^2T_1\otimes \Lambda^2 T_2)$.
The usual Kostant formula for Killing vector fields immediately generalizes to
$$\nabla_{X_1}\phi_1=R_{X_1}\phi,\qquad \nabla_{X_2}\phi_2=R_{X_2}\phi,\qquad \nabla_{X_1}\phi_3=R_{X_1}\phi_2,\qquad \nabla_{X_2}\phi_3=R_{X_2}\phi_1,$$
where for any vector bundle $F$, $i\in\{1,2\}$, and $X\in T_i$, $R_X:T_i\otimes F\to \Lambda^2 T_i\otimes F$ is defined by
$R_X(Y\otimes\sigma):=R_{X,Y}\otimes\sigma$.

We thus get a parallel section $\Phi:=(\phi,\phi_1,\phi_2,\phi_3)$ of the vector bundle 
$$E:=(T_1\otimes T_2)\oplus (\Lambda^2T_1\otimes T_2)\oplus(T_1\otimes\Lambda^2 T_2)\oplus(\Lambda^2T_1\otimes \Lambda^2 T_2)$$
with respect to the connection defined on vectors $X_i\in T_i$ by
$$\tilde\nabla_{X_1}(\phi,\phi_1,\phi_2,\phi_3):=(\nabla_{X_1}\phi-\phi_1(X_1),\nabla_{X_1}\phi_1-R_{X_1}\phi,\nabla_{X_1}\phi_2-\phi_3(X_1),\nabla_{X_1}\phi_3-R_{X_1}\phi_2)$$
and
$$\tilde\nabla_{X_2}(\phi,\phi_1,\phi_2,\phi_3):=(\nabla_{X_2}\phi-\phi_2(X_2),\nabla_{X_2}\phi_1-\phi_3(X_2),\nabla_{X_2}\phi_2-R_{X_2}\phi,\nabla_{X_2}\phi_3-R_{X_2}\phi_1).$$
We now define for $i=1,2$ the connections $\tilde\nabla^i$ on $E_i:=\T M_i\oplus \Lambda^2\T M_i$ by 
$$\tilde\nabla^i_{X_i}(\alpha_i,\beta_i):=(\nabla^{M_i}_{X_i}\alpha_i-\beta_i(X_i),\nabla^{M_i}_{X_i}\beta_i-R^{M_i}_{X_i}(\alpha_i)),$$
and notice that $E=\pi_1^*(E_1)\otimes\pi_2^*(E_2)$ and that $\tilde\nabla$ coincides with the tensor product connection induced by $\tilde\nabla^1$ and $\tilde\nabla^2$ on $E$. It follows that the space of $\tilde\nabla$-parallel sections of $E$ is the tensor product of the spaces of $\tilde\nabla^1$-parallel sections of $E_1$ and of $\tilde\nabla^2$-parallel sections of $E_2$. Taking the first component of these sections yields the desired result.
\end{proof}

\begin{ere}
Note that the compactness assumption in Theorem \ref{product} is essential. There are many non compact
products, with  trace-free conformal Killing 2-tensors which are not defined by Killing tensors of the factors. The simplest example is the
flat space $\RM^n$. 
\end{ere}


\section{Weitzenb\"ock formulas}

Let $(M^n, g)$ be an oriented Riemannian manifold with Riemannian curvature tensor $R$.
The curvature operator $\mathcal R : \Lambda^2 \T M \rightarrow \Lambda^2 \T M$ is defined
by $g(\mathcal R (X \wedge Y), Z\wedge V) = R(X,Y,Z, V)$. 
With this convention we have
$\mathcal R = - \, \Id$ on the standard sphere.

 Let 
$P=P_{\SO(n)}$ be the frame bundle and $EM$ a vector bundle associated to $P$ via a
$\SO(n)$-representation $\rho: \SO(n) \rightarrow \Aut (E)$.
Then the curvature endomorphism $q(R) \in \End \, EM$ is  defined as
$$
q(R) \;:=\; \frac12 \, \sum_{i, j} (e_i \wedge e_j)_\ast \circ \mathcal R(e_i \wedge e_j)_\ast \ .
$$
Here $\{e_i\}, i = 1, \ldots n$, is a local orthonormal frame and for $X\wedge Y \in \Lambda^2 \T M $
we define $(X \wedge Y)_\ast = \rho_\ast (X \wedge Y)$, where $\rho_\ast: \so(n) \rightarrow \End E$ is the differential
of $\rho$. In particular, 
the standard action of $\Lambda^2\T M$ on $\T M $ is written as
$
(X\wedge Y)_\ast\, Z \;=\; g( X,\,Z)\,Y \;-\; g(Y,\, Z) \, X = (Y \cdot X \,\lrcorner - X \cdot Y \, \lrcorner \,) Z
$.
This is compatible with
$$
g( (X \wedge Y)_\ast Z, V ) \;=\; g( X \wedge Y, Z \wedge V ) \;=\; g(X,Z) \, g(Y, V) \;-\; g( X, V)\, g( Y, Z)  \ .        
$$

For  any section $\varphi \in \Gamma(EM)$ we have
$
 \mathcal R (X \wedge Y)_\ast \, \varphi \;=\; R_{X,Y}\,\varphi
$.
It is easy to check that $q(R)$ acts as the Ricci tensor on tangent vectors. The definition of $q(R)$ is independent  of the 
orthonormal frame of  $\Lambda^2 \T M$, i.e. $q(R)$ can be written as 
$
q(R) = \sum \omega_{i \, \ast} \circ \mathcal R(\omega_i)_\ast
$
for any orthonormal frame of $\Lambda^2\T M$. Moreover it is easy to verify that $q(R)$ is a symmetric endomorphism of the vector bundle
$EM$.

The action of $q(R)$ on a symmetric $p$-tensors $K$ can be written as
$
q(R) K = \sum e_j \cdot    e_i \, \lrcorner\, R_{e_i\, e_j} K 
$.
On symmetric $2$-tensors  $h$ the curvature
endomorphism $q(R)$ is related to the classical curvature endomorphism  $\Rh$ (cf. \cite[p. 52]{besse}), which is defined by
$$
(\Rh h)(X, Y) \;=\; \sum h (R_{X, e_i} Y, e_i)\ .
$$
If $h$ is considered as a symmetric endomorphism the action of $\Rh$ on $h$ can be written as
$
\Rh (h) (X) = - \sum R_{X, e_i} \, h(e_i)\ 
$.

The action of  $\Ric$ is extended to  symmetric $2$-tensors $h$ as a derivation, i.e. it is defined as  
$
\Ric (h)(X, Y)  = - h(\Ric X, Y) - h(X, \Ric Y) \ 
$. 
Then the
following formula holds on $\Sym^2 \T M$:
\begin{equation}\label{qrh}
q(R) \;=\; 2\, \Rh \;-\; \Ric \ .
\end{equation}
If $h$ is the metric $g$ then $(\Rh g)(X, Y) = - \Ric(X,Y)$ and 
$\Ric (g)(X,Y) = - 2\, \Ric (X,Y)\ $.

As seen above, the covariant derivative $\nabla$  on $\Sym^p_0 \T M$ decomposes into three components defining
 three first order differential operators: $P_i(K): = p_i(\nabla K), \; i=1,2,3$, where $p_i$ are the orthogonal
 projections onto the three summands in the decomposition \eqref{deco}. The operators $P^*_i P^{\phantom{\ast}}_i, \, i=1,2,3$
 are then second order operators on sections of  $\Sym^p_0 \T M$. These three operators are linked
 by a Weitzenb\"ock formula:
 
 \begin{epr}\label{wbf}
 Let $K$ be any section of $\Sym^p_0 \,\T M$, then:
 $$
 q(R)\, K \; =\; -\, p \, P_1^*P^{\phantom{\ast}}_1 \, K \;+\; (n+p-2)\,P_2^*P^{\phantom{\ast}}_2 \, K \;+\; P_3^*P^{\phantom{\ast}}_3\, K \ .
  $$
 \end{epr}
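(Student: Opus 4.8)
The plan is to recognise the asserted formula as the instance, for the bundle $E:=\Sym^p_0\T M$, of the general Weitzenb\"ock formula attached to the decomposition \eqref{deco}, and then to evaluate the three universal coefficients by a Casimir computation. Since the $p_k$ are orthogonal projections summing to $\Id$, one has $P_1^*P_1+P_2^*P_2+P_3^*P_3=\nabla^*\nabla$ on sections of $E$, so it is enough to exhibit $q(R)$ as an explicit linear combination of the $P_k^*P_k$. To this end I would introduce the $\SO(n)$-equivariant bundle endomorphism $\mathcal B:=\sum_{i<j}(e_i\wedge e_j)_*\otimes(e_i\wedge e_j)_*$ of $\T M\otimes E$, where the first factor acts on $\T M$ and the second on $E$; it is the ``cross term'' of the Casimir operator of $\so(n)$ on the tensor product. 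By Schur's lemma $\mathcal B$ acts on the $k$-th summand $E_k$ of \eqref{deco} (with $E_1=\Sym^{p+1}_0\T M$, $E_2=\Sym^{p-1}_0\T M$, $E_3=\Sym^{p,1}\T M$) as a scalar $w_k$, and the cross-term identity $\Cas_{\T\otimes E}=\Cas_{\T}\otimes\Id+\Id\otimes\Cas_{E}-2\mathcal B$ gives $w_k=\tfrac12\bigl(\Cas(\T)+\Cas(E)-\Cas(E_k)\bigr)$ for the appropriately normalised Casimir. Hence $\mathcal B=w_1p_1+w_2p_2+w_3p_3$, and since $p_k^*=p_k=p_k^2$ this yields $\nabla^*\mathcal B\nabla=\sum_k w_k\,(p_k\nabla)^*(p_k\nabla)=\sum_k w_k\,P_k^*P_k$ on $\Gamma(E)$.

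The main computational point is to identify the a priori second-order operator $\nabla^*\mathcal B\nabla$ with the zeroth-order operator $q(R)$. Writing $\nabla K=\sum_l e_l\otimes\nabla_{e_l}K$, a direct computation with the $\so(n)$-action gives $\mathcal B(\nabla K)=\sum_{i,l}e_l\otimes(e_i\wedge e_l)_*\nabla_{e_i}K$; applying $\nabla^*$ in a local orthonormal frame parallel at a fixed point one gets $\nabla^*\mathcal B\nabla K=-\sum_{i,l}(e_i\wedge e_l)_*\nabla_{e_l}\nabla_{e_i}K$. Antisymmetrising in $i$ and $l$, the symmetric part of the iterated covariant derivative drops out against $(e_i\wedge e_l)_*$, and the Ricci identity $\nabla_{e_l}\nabla_{e_i}K-\nabla_{e_i}\nabla_{e_l}K=R_{e_l,e_i}K$ leaves $\nabla^*\mathcal B\nabla K=\tfrac12\sum_{i,l}(e_i\wedge e_l)_*R_{e_i,e_l}K=\tfrac12\sum_{i,l}(e_i\wedge e_l)_*\circ\mathcal R(e_i\wedge e_l)_*K$, which is exactly $q(R)K$ by definition. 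Therefore $q(R)=\sum_k w_k P_k^*P_k$ on $\Gamma(E)$.

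It then remains to compute the weights. With the standard highest-weight formula $\Cas(\lambda)=\langle\lambda,\lambda+2\rho\rangle$ for $\so(n)$, normalised so that $\{e_i\wedge e_j\}_{i<j}$ is orthonormal (equivalently $\Cas(\T)=n-1$), one obtains $\Cas(\Sym^q_0\T M)=q(q+n-2)$ and, for the hook representation of highest weight $(p,1,0,\ldots,0)$, $\Cas(\Sym^{p,1}\T M)=p(p+n-2)+(n-3)$. Substituting these into $w_1,w_2,w_3$ gives $w_1=-p$, $w_2=n+p-2$ and $w_3=1$, which is precisely the stated Weitzenb\"ock formula.

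I expect the only point requiring care to be the bookkeeping of normalisations in the passage from $\mathcal B$ to $q(R)$: the invariant inner product on $\so(n)$, the factors of $2$ coming from summing over $i<j$ versus over all pairs, and the sign convention $(X\wedge Y)_*Z=g(X,Z)Y-g(Y,Z)X$ must all be tracked so that the overall constant in $\nabla^*\mathcal B\nabla=q(R)$ comes out as $1$ with the paper's convention $\mathcal R=-\Id$ on $\SM^n$ (a convenient check is that the formula then reduces to $q(R)=\Cas(E)$ on the round sphere, and to $q(R)=\Ric$ on $\T M$). Should the hook Casimir be felt to be a detour, it can be avoided altogether: the final Lemma of Section 2 gives $\dd_0^*\dd_0=(p+1)P_1^*P_1$ and $\delta^*\delta=\tfrac{(n+2p-2)(n+p-3)}{n+2p-4}P_2^*P_2$, so the assertion is equivalent to the Weitzenb\"ock identity $q(R)=\nabla^*\nabla-\delta\dd_0+\tfrac{n+2p-4}{n+2p-2}\dd_0\delta$ on $\Gamma(\Sym^p_0\T M)$, which can be checked directly from the commutator relations \eqref{commu}--\eqref{commu3} together with the Ricci identity.
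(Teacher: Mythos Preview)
Your proof is correct and follows essentially the same strategy as the paper: both introduce the conformal weight operator on $\T M\otimes\Sym^p_0\T M$ (the paper's $B$ is your $\mathcal B$ up to a sign), show via the Ricci identity that it turns $\nabla^2$ into $q(R)$, and thereby reduce the Weitzenb\"ock formula to the purely algebraic identity $B=p\,p_1-(n+p-2)\,p_2-p_3$ in $\End(\T\otimes\Sym^p_0\T)$. The one genuine difference is in how this algebraic identity is verified. The paper rewrites it as $B=\pi_1^*\pi_1-\tfrac{n+2p-4}{n+2p-2}\,\pi_2^*\pi_2-\id$ and checks it by a direct computation using the explicit formulas for $\pi_i$ and $\pi_i^*$ already established in Section~2; you instead recognise $\mathcal B$ as the cross term of the $\so(n)$-Casimir and read off the three eigenvalues $w_k=\tfrac12(\Cas(\T)+\Cas(\Sym^p_0)-\Cas(E_k))$ from the highest-weight formula. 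Your route is more conceptual and explains the origin of the coefficients, at the cost of needing the Casimir eigenvalue of the hook representation $\Sym^{p,1}$; the paper's route is more elementary and entirely self-contained, using only the projection formulas it has already set up.
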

\proof
The stated Weitzenb\"ock formula can be obtained as a special case of a general procedure 
described in \cite{gu}. However it is easy to check it directly using the following remarks.

Let $E$ be any $\SO(n)$-representation defining a vector bundle $EM$ and let $\T $ be the 
standard representation defining the tangent bundle $\T M$. Then any $p \in \End (\T \otimes E)$
can be interpreted as an element in $\Hom (\T \otimes \T \otimes E, E)$ defined as
$p(a\otimes b \otimes e) = (a \,  \lrcorner\otimes \, \id) \, p(b \otimes e)$, for $a, b \in \T $ and $e \in E$.
Important examples of such endomorphisms are the  orthogonal projections $p_i,\, i =1, \ldots, N$, onto the summands
in a decomposition $\T  \otimes E = V_1 \oplus \ldots \oplus V_N$.  Another example is the so-called
{\it conformal weight operator}  $B \in \Hom (\T \otimes \T \otimes E, E)$ defined as
$
B(a \otimes b \otimes e) = (a \wedge b)_* e
$.
As an element in $\End (\T \otimes E)$, the conformal weight operator can be written as
$
B(b \otimes e) = \sum e_i \otimes (e_i \wedge b)_\ast e \ 
$.

Let $K$ be a section of $EM$, then $\nabla^2K = \sum e_i \otimes e_j \otimes \nabla^2_{e_i, e_j}K$
is a section of the bundle $\Hom (\T M \otimes \T M \otimes EM, EM)$.
Using the remark above we can apply elements of the bundle $\End(\T M \otimes EM)$ to $\nabla^2 K$.
 It is then easy to check that
$$
B (\nabla^2 K ) \;=\; q(R)\, K,\qquad \id (\nabla^2 K) \;=\; -\, \nabla^*\nabla K, \qquad p_i(\nabla^2K) \;=\; - \,P^*_i P^{\phantom{\ast}}_i K
$$
where $P_i, \, i=1, \ldots, N$ are the first order differential operators $P_i(K) := p_i(\nabla K)$. 
Hence in order to prove the Weitzenb\"ock formula above it is enough to 
verify the following equation for endomorphisms of $\T  \otimes E$ in the case $E = \Sym^p \T $:
\bea
B &=& p\, p_1 \; - \; (n+p-2) \, p_2 \;-\, p_3
\;=\;
(p+1) \, p_1 \; - \; (n+p-3) \, p_2 \;-\, \id\\[1ex]
&=&
\pi^*_1\,\pi_1 \;-\; \tfrac{n+2p-4}{n+2p-2}\, \pi^*_2\,\pi_2 \;-\;\id \ .
\eea
This is an easy calculation using the explicit formulas for $\pi_i^*$ and $ \, \pi_i, \, i=1,2$ given above.
\qed


\subsection{Eigenvalue estimates for the Lichnerowicz Laplacian}

The Lichnerowicz Laplacian $\Delta_L$ is a Laplace-type operator acting on sections of $\Sym^p \T M$. It can be
defined by  $\Delta_L := \nabla^\ast \, \nabla \,+\, q(R)$. On symmetric $2$-tensors it is usually written as
$\Delta_L = \nabla^\ast \nabla \,+\, 2\, \Rh \;-\; \Ric $, which is the same formula, by \eqref{qrh}.

\begin{epr}\label{bound}
Let $(M^n, g)$ be a compact Riemannian manifold. Then $\Delta_L \ge 2\,q(R)$ holds on  the space of
divergence-free symmetric tensors. Equality  $\Delta_L h = 2 q(R) h$ holds for a divergence free tensor $h$ if and only if  $h$ is a Killing tensor.
\end{epr}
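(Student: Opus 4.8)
The plan is to reduce everything to the Weitzenb\"ock identity
$$(\Delta_L-2q(R))h=\dd^*\dd h\qquad\text{(recall }\dd^*=\delta\text{)}$$
valid for every divergence-free $h$ on the compact manifold $M$; equivalently, after pairing with $h$ and integrating, $\langle(\Delta_L-2q(R))h,h\rangle_{L^2}=\|\dd h\|^2_{L^2}$. Granting this, the Proposition is immediate: the right-hand side is $\ge0$, which is the bound $\Delta_L\ge2q(R)$ on divergence-free tensors, and it vanishes precisely when $\dd h=0$, i.e. when $h$ is a Killing tensor (and then $(\Delta_L-2q(R))h=\dd^*(0)=0$ pointwise). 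Since $\Delta_L=\nabla^*\nabla+q(R)$, the identity to be proved reads $\nabla^*\nabla h-q(R)h=\dd^*\dd h$ when $\delta h=0$.

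I would first establish this for a trace-free divergence-free $h\in\Gamma(\Sym^p_0\T M)$, where it is essentially a bookkeeping exercise with the material of Section 2. By \eqref{deco} one has $\nabla^*\nabla=P_1^*P_1+P_2^*P_2+P_3^*P_3$, while Proposition \ref{wbf} writes $q(R)$ through the same three operators; subtracting, the $P_3$-terms cancel and $\nabla^*\nabla-q(R)=(p+1)\,P_1^*P_1-(n+p-3)\,P_2^*P_2$ on $\Gamma(\Sym^p_0\T M)$. The hypothesis $\delta h=0$ kills the second term, because $P_2^*P_2h$ is a constant multiple of $\delta^*\delta h=0$ by the Lemma relating $\delta^*\delta$ and $P_2^*P_2$; and the first term equals $\dd_0^*\dd_0h$ by the companion statement $\dd_0^*\dd_0=(p+1)P_1^*P_1$ in the same Lemma. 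Finally, using \eqref{dprojection} and $\delta h=0$ we get $\dd_0h=(\dd h)_0=\dd h+\tfrac1{n+2p-2}\LL\delta h=\dd h$; moreover $\delta\dd h$ is trace-free because $\Lambda\delta\dd h=\delta\Lambda\dd h=\delta(-2\delta h)=0$ by \eqref{commu3} and $\Lambda h=0$. Hence $\dd_0^*\dd_0h=\dd_0^*(\dd h)=\delta\dd h=\dd^*\dd h$, which proves the identity in the trace-free case; the equality discussion then reproduces Corollary \ref{tracefree}, since $\dd^*\dd h=0\iff\dd h=0\iff h$ is a St\"ackel tensor.

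For a general divergence-free $h$ I would pass to the standard decomposition $h=\sum_{i\ge0}\LL^i h_i$. As $\nabla^*\nabla$ and $q(R)$ are $\OO(n)$-equivariant, they commute with $\LL$, so $\Delta_L-2q(R)$ preserves the mutually $L^2$-orthogonal subbundles $\LL^i(\Sym^{p-2i}_0\T M)$, acting on $\LL^ih_i$ via the trace-free formula applied to $h_i$. The divergence condition couples the $h_i$: expanding $\delta h=0$ with the commutator $[\delta,\LL^i]=-2i\LL^{i-1}\dd$ and Lemma \ref{constants}, and comparing components, gives $(1-2ja_j)\,\delta h_j=2(j+1)(\dd h_{j+1})_0$ for every $j$, and $(\dd h_{j+1})_0=\pi_1(\nabla h_{j+1})$ has $L^2$-norm a positive multiple of $\|P_1h_{j+1}\|_{L^2}$. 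Substituting these relations into $\langle(\Delta_L-2q(R))h,h\rangle_{L^2}=\sum_i\langle(\Delta_L-2q(R))\LL^ih_i,\LL^ih_i\rangle_{L^2}$ and re-indexing collapses it to a combination $\sum_i A_i\|P_1h_i\|^2_{L^2}$ (for $p=2$ one checks $\langle(\Delta_L-2q(R))h,h\rangle_{L^2}=3\|P_1h_0\|^2_{L^2}+\tfrac{2n^2}{n+2}\|\dd f\|^2_{L^2}$ with $f=h_1$, a useful test). The one genuine obstacle is to verify that the constants $A_i$ are positive; with that in hand, equality forces all $P_1h_i=0$, hence all $\delta h_i=0$, hence each $h_i$ is St\"ackel and $h$ is Killing, while conversely a divergence-free Killing tensor has all standard components $h_i$ St\"ackel (by the Proposition in Section 3 on divergence-free Killing tensors), so $(\Delta_L-2q(R))h_i=0$ for each $i$ and therefore $(\Delta_L-2q(R))h=0$, closing the equivalence.
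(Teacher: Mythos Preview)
Your opening identity $(\Delta_L-2q(R))h=\d\dd h-\dd\d h$ is exactly the right target, and once you have it the Proposition follows just as you say. The issue is how you propose to prove this identity. The paper establishes it for \emph{every} symmetric $p$-tensor (with no trace assumption) by a five-line direct computation: expand $\dd\d h=-\sum e_i\cdot(e_j\lrcorner\nabla_{e_i}\nabla_{e_j}h)$ and $\d\dd h=\nabla^*\nabla h-\sum e_i\cdot(e_j\lrcorner\nabla_{e_j}\nabla_{e_i}h)$, subtract, and recognise the curvature commutator as $q(R)h$. No decomposition into trace-free pieces, no $P_i$ operators, no $\LL$-commutation; the operators $\dd,\d$ act on the full bundle $\Sym^p\T M$, so the identity holds there directly.

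By contrast, your route through Proposition \ref{wbf} forces you onto $\Sym^p_0\T M$, which is why you then need the standard decomposition and the coupling analysis for a general divergence-free $h$. Your trace-free argument is fine, but in the general case you yourself flag ``the one genuine obstacle is to verify that the constants $A_i$ are positive'' --- and you do not verify it. That is a real gap: the cross-terms coming from $\|P_2h_i\|^2$ enter with the wrong sign, and while the coupling relations you write down are correct, collapsing the sum to $\sum_i A_i\|P_1h_i\|^2$ with all $A_i>0$ requires a nontrivial combinatorial check that your sketch does not supply. The paper's direct computation sidesteps all of this; I would replace your entire argument by that computation.
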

\proof
Directly from the definition we calculate
$$
\dd \d h = -\sum e_i \cdot \nabla_{e_i} (e_j \,\lrcorner\, \nabla_{e_j} h) = - \sum e_i \cdot (e_j \,\lrcorner\, \nabla_{e_i} \nabla_{e_j} h) \ .
$$
Similarly we have
\bea
\d \dd  h &=& - \sum e_i  \,\lrcorner\, \nabla_{e_i } (e_j \cdot \nabla_{e_j} h) \;=\;  - \sum e_i \,\lrcorner\, (e_j \cdot \nabla_{e_i} \nabla_{e_j} h) \\[1ex]
&=&
-\sum \nabla_{e_i} \nabla_{e_i}h\; -\; \sum e_j \cdot ( e_i\, \lrcorner\, \nabla_{e_i} \nabla_{e_j} h) \\ [1ex]
&=&
\nabla^\ast \nabla h\; -\; \sum e_i \cdot ( e_j\, \lrcorner\,   \nabla_{e_j} \nabla_{e_i} h)
\eea

Taking the difference we immediately  obtain 
$$
\d \dd  h \;-\; \dd  \d h \; =\;  \nabla^\ast \nabla h  \; -\; \sum e_i \cdot e_j\, \lrcorner\,   R_{e_j, e_i} h \;=\;
 \nabla^\ast \nabla h \;-\; q(R)h  \;=\; \Delta_L h - 2 q(R) h \ .
$$
Thus, if $h$ is divergence free we have $(\Delta_L - 2q(R)) h = \d \dd  h$ and the inequality follows after taking the $L^2$ product with $h$. The equality case is clearly characterized by $\dd h = 0$.
\qed

\begin{ere}
For symmetric $2$-tensors this estimate for $\Delta_L$ was proven in \cite{gibbons03}.
\end{ere}

\begin{ere}
As a consequence of Proposition \ref{bound}
we see that divergence free Killing tensors  on compact Riemannian manifolds are characterized by the equation 
$\nabla^\ast \nabla h = q(R) h $. This generalizes the well known characterization of Killing vector fields as divergence free
vector fields $\xi$ with $\nabla^\ast \nabla \xi = \Ric(\xi)$.
\end{ere}
\begin{ere}\label{eigenspace}
Recall that $q(R)$ is a symmetric endomorphism. The eigenvalues of $q(R)$ are constant on homogeneous spaces. On symmetric
spaces $M=G/K$ the Lichnerowicz Laplacian $\Delta_L$ can be identified with the Casimir operator $\Cas_G$ of the group $G$ and
$q(R)$ with the Casimir operator $\Cas_K$ of the group $K$. 
\end{ere}


\subsection{Non-existence results}

In \cite{dairbekov} Dairbekov and Sharafutdinov show the non-existence of trace-free conformal Killing tensors
on manifolds with negative  sectional curvature. In this section we will give a short new proof of this result.
 
\begin{epr} \label{ptf}
On a compact Riemannian manifold $(M, g)$ of non-positive sectional curvature
any trace-free conformal Killing tensor has to be parallel. If in addition there exists a point in $M$
where the sectional curvature of every two-plane is strictly negative, then $M$ does not 
carry any (non-identically zero) trace-free conformal Killing tensor.  
\end{epr}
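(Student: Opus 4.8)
The plan is to run a Bochner argument on the compact manifold $M$, built on the Weitzenb\"ock formula of Proposition~\ref{wbf}. Let $K\in\Gamma(\Sym^p_0\T M)$ be a trace-free conformal Killing tensor. By Lemma~\ref{conf} this is equivalent to $P_1(K)=0$, so Proposition~\ref{wbf} collapses to
$$q(R)\,K\;=\;(n+p-2)\,P_2^*P_2 K\;+\;P_3^*P_3 K\ .$$
Taking the pointwise scalar product with $K$ and integrating over $M$ turns the right-hand side into $(n+p-2)\,\|P_2 K\|_{L^2}^2+\|P_3 K\|_{L^2}^2\ge 0$, so $\int_M g(q(R)K,K)\ge 0$. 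If one can prove the reverse \emph{pointwise} inequality $g(q(R)K,K)\le 0$ under the curvature hypothesis, then this integral must vanish and so must $\|P_2 K\|_{L^2}$ and $\|P_3 K\|_{L^2}$ (here $n+p-2>0$); hence $P_2 K=0$ and $P_3 K=0$, and combined with $P_1(K)=0$ this gives $\nabla K=P_1(K)+P_2(K)+P_3(K)=0$, i.e.\ $K$ is parallel. If in addition $M$ has a point $x_0$ with all sectional curvatures strictly negative, the vanishing of $g(q(R)K,K)$ at $x_0$ should force $K(x_0)=0$, and a parallel tensor vanishing at one point of a connected manifold is identically zero.

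So the whole argument rests on the pointwise estimate: at a point where all sectional curvatures are $\le0$ one has $g(q(R)K,K)\le0$, with equality for $K\ne0$ excluded once all sectional curvatures there are $<0$. For $p=2$ this is transparent: writing $h\in\Sym^2_0\T M$ in an orthonormal eigenbasis $\{e_i\}$ with eigenvalues $\mu_i$ and using \eqref{qrh}, a short computation gives $g(q(R)h,h)=\tfrac12\sum_{i,j}R(e_i,e_j,e_j,e_i)(\mu_i-\mu_j)^2$, which is $\le0$ when all the sectional curvatures $R(e_i,e_j,e_j,e_i)$ are, and which forces all the $\mu_i$ to coincide --- hence to vanish, as $\tr h=0$ --- once they are all negative. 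For general $p$ I would start from the formula $q(R)K=\sum_{i,j}e_j\cdot e_i\lr R_{e_i,e_j}K$, use that $R_{e_i,e_j}$ acts as a derivation on $\Sym\T M$ (so that $e_i\lr R_{e_i,e_j}K=R_{e_i,e_j}(e_i\lr K)-(R_{e_i,e_j}e_i)\lr K$) together with $\sum_i R_{e_i,e_j}e_i=-\Ric(e_j)$, to obtain
$$g(q(R)K,K)\;=\;\sum_{i,j}g\big(R_{e_i,e_j}(e_i\lr K),\,e_j\lr K\big)\;+\;\sum_j g\big(\Ric(e_j)\lr K,\,e_j\lr K\big)\ .$$
The second sum is manifestly $\le0$ in a Ricci eigenframe when $\Ric\le0$; the first sum is again a curvature expression, now for the symmetric $(p-1)$-tensors $e_i\lr K$, and the idea is to iterate this reduction (choosing an adapted orthonormal frame at each stage) until $g(q(R)K,K)$ appears as a combination of sectional curvatures with non-negative coefficients, these vanishing simultaneously only when $K=0$, as in the $p=2$ case.

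Carrying out this last step for arbitrary $p$ --- bookkeeping the iterated curvature terms and checking that non-positive sectional curvature is precisely the hypothesis one needs --- is the main obstacle, and is exactly where the estimate in the previous version of the statement went wrong: non-positive sectional curvature does \emph{not} make the curvature operator $\mathcal R$ negative semidefinite (only its values on decomposable $2$-vectors are controlled), so one must genuinely exploit the symmetric, trace-free structure of $K$ and not only the sign of $\mathcal R$. Once the pointwise estimate is established, the integral inequality forces $g(q(R)K,K)\equiv0$ and the vanishing of $P_2K$, $P_3K$, which yields first that $K$ is parallel and then, under the strict negativity hypothesis, that $K\equiv0$.
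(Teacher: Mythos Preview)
Your Bochner framework is exactly the paper's: the Weitzenb\"ock formula of Proposition~\ref{wbf} together with $P_1K=0$ gives $\int_M g(q(R)K,K)\ge 0$, so a pointwise inequality $g(q(R)K,K)\le 0$ forces $P_2K=P_3K=0$ and hence $\nabla K=0$. The $p=2$ computation you give is correct.

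The genuine gap is the pointwise estimate for general $p$. Your decomposition
\[
g(q(R)K,K)=\sum_{i,j}g\bigl(R_{e_i,e_j}(e_i\lr K),\,e_j\lr K\bigr)+\sum_j g\bigl(\Ric(e_j)\lr K,\,e_j\lr K\bigr)
\]
is correct, and the Ricci term is indeed $\le 0$. But the first sum is \emph{not} of the form $\sum_i g(q(R)K_i,K_i)$ for the $(p-1)$-tensors $K_i:=e_i\lr K$; it mixes different $K_i$ and $K_j$ through the curvature, so the ``iterate this reduction'' step does not close as written. You effectively acknowledge this by calling it the main obstacle and stopping there --- but that is precisely the heart of the proof, so as it stands the argument is incomplete.

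The paper bypasses this induction entirely. It identifies $K\in\Sym^p_0T$ with the polynomial $X\mapsto g(K,X^p)$ and uses Schur's lemma to write $g(\cdot,\cdot)$ on $\Sym^p_0T$ as a positive multiple of $\int_{S_T}(\cdot)(X)\,(\cdot)(X)\,\dd\mu$. A direct computation then gives
\[
g(q(R)K,K)=-\,c\,p^2\int_{S_T}\sum_{j,k}B_X(e_j,e_k)\,(e_j\lr K)(X)\,(e_k\lr K)(X)\,\dd\mu,
\]
where $B_X(Y,Z):=g(R_{X,Y}X,Z)$. Non-positive sectional curvature is exactly the statement that $B_X$ is positive semi-definite for every $X$, so the integrand is $\ge 0$ and $g(q(R)K,K)\le 0$ follows immediately --- no induction, no bookkeeping. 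In the strictly negative case $B_X$ is positive definite on $X^\perp$, which forces $(Y\lr K)(X)=0$ for all $Y\perp X$; a short algebraic argument using \eqref{commu}--\eqref{commu2} then gives $K_x=0$. This polynomial/sphere-average trick is the idea your proposal is missing.
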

\proof
On sections of $\Sym^p_0 \T M$ we consider  the Weitzenb\"ock formula of Proposition \ref{wbf}:
$$
q(R) \;=\; - p\, P_1^*P^{\phantom{\ast}}_1 \;+\;  (n+p-2)\,P_2^*P^{\phantom{\ast}}_2 \;+\; P_3^*P^{\phantom{\ast}}_3 \ .
$$ 
Trace-free conformal Killing tensors are characterized by the equation $P_1 K = 0$.
In particular we obtain for the $\mathrm L^2$-scalar product:
\be\label{estimate}
(q(R)K, K)_{\mathrm L^2} = (n+p-2)\|P_2K\|^2 + \|P_3K \|^2 \ge 0 \ ,
\ee
where $K$ is a  trace-free conformal Killing tensor.
We will show that $(q(R)K, K)_{\mathrm L^2} \le  0$ holds on a manifold with non-positive 
sectional curvature. This together with \eqref{estimate} immediately implies  that $P_2K = 0$ and $P_3K=0$ 
and thus that $K$ has to be parallel.

\medskip

For any $x \in M$ and any fixed tangent vector $X \in \T_xM$ we consider the symmetric 
bilinear form 
$
B_X(Y, Z) := g(R_{X, Y}X, Z)
$,
defined on tangent vectors $Y, Z \in \T_xM$. Since the sectional curvature is non-positive, this
bilinear form is positive semi-definite. Hence there is an orthonormal basis  $e_1, \ldots , e_n$  of $\T_xM$ (depending on $X$) with
$
B_X(e_i, e_j) =0
$
for $i\neq j$ and $B_X(e_i, e_i ) = a_i(X) \ge 0$ for all $i$.

A symmetric tensor $K\in \Gamma(\Sym^p_0TM)$ can also be considered as a polynomial map on $\T_M$ by the formula
$K(X) := g(K, X^p)$. In particular we have for the Riemannian curvature
$
(R_{Y,Z} K)(X) = (\sum R_{Y,Z}e_k \cdot e_k\, \lrcorner \, K)(X) = p  \sum g(R_{Y,Z}e_k, X) \, (e_k \, \lrcorner \, K)(X)
$.

Let $T$ be the tangent space $T=\T_xM$ for some $x\in M$. Then we can define a scalar product
on $\Sym^p_0T$ by
$
\tilde g (K_1, K_2) : = \int_{S_T}K_1(X) K_2(X) \dd\mu
$,
where $S_T$ is the unit sphere in $T$ and $\dd \mu$ denotes the standard Lebesgue measure on $S_T$.
From Schur's Lemma it follows the existence of a non-zero constant $c$ such that
$
\tilde g (K_1, K_2) = \frac1c \,g(K_1, K_2)
$
holds for all $K_1, K_2 \in \Sym^p_0T$. Since both scalar products are positive definite the
constant $c$ has to be positive. 
We now compute the scalar product  $g(q(R)K,K)$ at some point $x\in M$. From the remarks above we obtain:
\bea
g(q(R)K, K) &=& \sum  g(e_j \cdot e_i \lr R_{e_i, e_j}K,  K) \;=\; \sum g(R_{e_i, e_j}K, e_i \cdot e_j \,\lrcorner\, K)\\
&=&c \int_{S_T}\sum\,  (R_{e_i, e_j}K)(X)\cdot (e_i \cdot e_j \,\lrcorner\, K)(X) \, \dd\mu\\
&=&
c \,p^2  \int_{S_T}\sum \, g(R_{e_i, e_j}e_k,X) \cdot (e_k \lr K)(X) \cdot g(e_i, X) \cdot (e_j \lr K)(X) \, \dd\mu
\\
&=&
-\, c\,p^2  \int_{S_T}\sum \, g(R_{X, e_j}X, e_k) \cdot (e_k \lr K)(X)  \cdot (e_j \lr K)(X) \, \dd\mu\\
&=&
-\, c \,p^2 \int_{S_T} \sum \, B_X(e_j, e_k)  \cdot (e_k \lr K)(X)  \cdot (e_j \lr K)(X) \, \dd\mu\\
&=&
-\, c \,p^2 \int_{S_T} \sum \, a_j(X) \, ((e_j \lr K)(X) )^2 \, \dd\mu\\
&\le& 0\ .
\eea
This proves that for every trace-free symmetric tensor $K$, on a manifold with non-positive sectional curvature, the inequality $g(q(R)K, K) \le0$ holds at every point. By the above arguments, if $K$ is conformal Killing, then $K$ has to be parallel. 

\medskip

If in addition there is a point $x \in M$ where all sectional
curvatures are negative, then the symmetric form $B_X$ is positive definite for all $X\in S_T$, so its eigenvalues are positive: $a_j(X)>0$. The computation above shows that $(Y \lr K)(X) =0$ for every $X\in S_T$ and for all tangent vectors $Y$ orthogonal to $X$. This is equivalent to 
\beq\label{kxy}0=g(K,(|X|^2Y-\langle X,Y\rangle X)\cdot X^{p-1})=|X|^2g(K,Y\cdot X^{p-1})-\langle X,Y\rangle g(K,X^p)\eeq
for all tangent vectors $X,Y\in \T_xM$. On the other hand, from \eqref{commu2} we immediately get for every $X,Y\in \T_xM$
$$g(Y\cdot K,X^{p+1})=(p+1)\langle X,Y\rangle g(K,X^p)$$
and 
$$g(\LL\cdot(Y\lrcorner K),X^{p+1})=p(p+1)|X|^2g(K,Y\cdot X^{p-1})\ .$$
From \eqref{kxy} we thus obtain
$$\LL\cdot(Y\lrcorner K)=p\,Y\cdot K,\qquad \forall\ Y\in \T_xM\ .$$ 
Applying $\L$ and using \eqref{commu} and \eqref{commu2} together with the fact that $\L K=0$, yields $$(2n+4(p-1))Y\lrcorner K=2p\,Y\lrcorner K$$ at $x$, whence $K_x=0$. As $K$ is parallel from the first part of the proof, this shows that $K\equiv 0$.
\qed

\medskip

\begin{ecor}\label{cor}
Let $\Sigma_g$ be a compact Riemannian surface of genus $g\ge 2$. Then $M$ admits no trace-free
conformal Killing tensors. More generally, there are no trace-free conformal Killing tensors on compact quotients
of symmetric spaces of non-compact type.
\end{ecor}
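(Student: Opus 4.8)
The plan is to deduce both statements from Proposition~\ref{ptf} by passing, within the given conformal class, to a metric of non-positive sectional curvature. The preliminary observation is that being a \emph{trace-free} conformal Killing tensor is a conformally invariant condition: the conformal Killing equation is conformally invariant by Lemma~\ref{lemma:conformalInvariant}, and the trace-free condition is preserved as well, since for $g'=e^{2f}g$ the frame $\{e^{-f}e_i\}$ is $g'$-orthonormal whenever $\{e_i\}$ is $g$-orthonormal, so that $\Lambda_{g'}=e^{2f}\Lambda_g$ as bundle maps on $\Sym^p\T M$ and in particular $\ker\Lambda_{g'}=\ker\Lambda_g$. Hence the space of trace-free conformal Killing $p$-tensors depends only on the conformal class.

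For the surface $\Sigma_g$ with $g\ge 2$ one has $\chi(\Sigma_g)=2-2g<0$, so by uniformization the conformal class of the given metric contains a metric $g_0$ of constant Gauss curvature $-1$. At every point of $(\Sigma_g,g_0)$ the (unique) tangent two-plane has strictly negative sectional curvature, so the second part of Proposition~\ref{ptf} applies: $g_0$ carries no non-zero trace-free conformal Killing tensor. By the conformal invariance above, neither does the original metric.

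For a compact quotient $M=\Gamma\backslash X$ of a symmetric space $X$ of non-compact type, the locally symmetric metric has non-positive sectional curvature, so the first part of Proposition~\ref{ptf} shows that every trace-free conformal Killing tensor on $M$ is parallel. One is thus reduced to ruling out non-zero parallel trace-free symmetric tensors. A parallel tensor amounts to an element of $\Sym^p\T_xM$ invariant under the holonomy group, which for a symmetric space is the linear isotropy group acting on $\mathfrak p=\T_xM$; when $X$ is irreducible this representation is irreducible, hence the only invariant of $\Sym^2\mathfrak p$ is a multiple of the metric, which has non-zero trace. Thus $\Sym^2_0\mathfrak p$ has no invariants and no non-zero parallel trace-free $2$-tensor exists, as required.

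I expect the last step to be the main obstacle. For $2$-tensors it reduces to the elementary representation-theoretic fact above, but for tensors of higher degree one must describe precisely the isotropy-invariant polynomials on $\mathfrak p$, which brings in the classification of irreducible symmetric spaces of non-compact type and the way their irreducible factors combine; the remaining steps are routine once the conformal invariance of the trace-free conformal Killing condition has been established.
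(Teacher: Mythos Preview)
Your argument for the surface case is correct and is exactly the intended deduction: the paper gives no explicit proof, so the natural reading of ``corollary'' is precisely your combination of Lemma~\ref{lemma:conformalInvariant} (conformal invariance of the trace-free conformal Killing condition) with uniformization, followed by the second clause of Proposition~\ref{ptf}.

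Your hesitation about the symmetric-space statement is well founded; in fact the gap you suspect is real, and the assertion as written is too strong. Your reduction via Proposition~\ref{ptf} to \emph{parallel} trace-free tensors is correct, and your Schur argument settles $p=2$ on an \emph{irreducible} quotient. But both hypotheses are necessary. For a reducible quotient such as $\Gamma\backslash(\HM^2\times\HM^2)$ the tensor $g_1-g_2$ is parallel, trace-free and symmetric, hence a non-zero trace-free conformal Killing $2$-tensor. For an irreducible higher-rank example, take a compact quotient of $SL(3,\RM)/\SO(3)$: here $\mathfrak p\cong\Sym^2_0\RM^3$ and the $\SO(3)$-invariant cubic $A\mapsto\tr(A^3)$ defines a non-zero element of $\Sym^3_0\mathfrak p$ (its $\Lambda$-image lies in $\mathfrak p$, which has no $\SO(3)$-invariant vector), hence a non-zero parallel trace-free conformal Killing $3$-tensor on the quotient. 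So there is no way to complete ``the last step'' in the generality stated.

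The statement survives in two regimes, and your write-up should make this explicit: either restrict to rank-one symmetric spaces of non-compact type, where the sectional curvature is strictly negative and Proposition~\ref{ptf} applies directly without any holonomy argument; or restrict to \emph{irreducible} quotients and $p=2$, where your Schur argument is complete.
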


\begin{ere}
Note that this result was also obtained by D.J.F. Fox in \cite{fox2}, Corollary 3.1.
\end{ere}


\section{Killing tensors with two eigenvalues}

Let $K \in \Gamma(\Sym^2_0 \T M)$ be a non-trivial trace-free Killing (i.e. St\"ackel) tensor on a connected Riemannian manifold $(M^n,g)$. We assume throughout this section that $K$ has at most  two eigenvalues at every point of $M$. 

The following result was proved by W. Jelonek (cf.  \cite[Theorem 2.1]{jelonek99}):

\begin{elem}
The multiplicities of the eigenvalues of $K$ are constant on $M$, so the eigenspaces of $K$ define two distributions
$\T M = E_1 \oplus E_2$. If $n_1, n_2$ denote the dimensions of $E_1, E_2$ and $\pi_i$ denote the orthogonal projection onto $E_i$ for $i=1,2$, then $K$ is a constant multiple of $n_2\pi_1-n_1\pi_2$.
\end{elem}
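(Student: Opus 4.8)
The plan is to restrict to the open set $U\subseteq M$ on which $K$ has two distinct eigenvalues, show there that both eigenvalues (hence $K$, up to the claimed scalar) are locally constant, and then propagate this to all of $M$. First I would observe that $U$ is dense: at a point of $M\setminus U$ the tensor $K$ has a single eigenvalue, which must be $0$ because $\tr K=0$, so $K$ vanishes there; but by the unique continuation property of conformal Killing tensors (Remark~\ref{ft}) a non-trivial Killing tensor cannot vanish on a non-empty open set, so $M\setminus U$ has empty interior. On $U$ the two eigendistributions $E_1,E_2$ are smooth subbundles of locally constant ranks $n_1,n_2$, and locally we may write $K=\lambda_1\pi_1+\lambda_2\pi_2$ with smooth functions $\lambda_1\neq\lambda_2$ satisfying $n_1\lambda_1+n_2\lambda_2=0$. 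Throughout I would use the elementary facts that $\nabla_X\pi_2=-\nabla_X\pi_1$ and that, differentiating $\pi_1^2=\pi_1$, the endomorphism $\nabla_X\pi_1$ interchanges $E_1$ and $E_2$; consequently $\nabla_XK=(X\lambda_1)\pi_1+(X\lambda_2)\pi_2+(\lambda_1-\lambda_2)\nabla_X\pi_1$.

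The core of the argument is to deduce $d\lambda_1=d\lambda_2=0$ on $U$ from the Killing equation $\dd K=0$ together with its consequence $\d K=0$ (Corollary~\ref{tracefree}). Step one: evaluating $(\nabla_XK)(X,X)=0$ on a unit vector $X\in E_i$ and using that $(\nabla_X\pi_1)X\in E_{3-i}$ is orthogonal to $X$ yields $X\lambda_i=0$, so each $\lambda_i$ is constant along its own eigendistribution. Step two: writing out $\d K=0$ in an orthonormal frame adapted to $E_1\oplus E_2$ and cancelling the eigenvalue terms by step one leaves $(\lambda_1-\lambda_2)\sum_j(\nabla_{e_j}\pi_1)e_j=0$; decomposing this sum into its $E_1$- and $E_2$-components, which lie in complementary subbundles, forces $\sum_{e_i\in E_1}(\nabla_{e_i}\pi_1)e_i=0$ and likewise for $E_2$ — i.e.\ $E_1$ and $E_2$ are minimal distributions. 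Step three: I would polarize the Killing equation to $(\nabla_XK)(Y,Z)+(\nabla_YK)(Z,X)+(\nabla_ZK)(X,Y)=0$, set $X=Y=e_i$ with $\{e_i\}$ an orthonormal frame of $E_1$ and $Z\in E_2$ arbitrary, and sum over $i$; the mixed covariant-derivative terms collapse to $(\lambda_1-\lambda_2)\,g\bigl(\sum_{e_i\in E_1}(\nabla_{e_i}\pi_1)e_i,\,Z\bigr)$, which vanishes by step two, leaving $n_1\,(Z\lambda_1)=0$. Hence $d\lambda_1=0$ on $U$, and then $d\lambda_2=0$ from the trace relation. Setting $c:=\lambda_1/n_2=-\lambda_2/n_1$, we obtain $K=c\,(n_2\pi_1-n_1\pi_2)$ with $c,n_1,n_2$ locally constant on $U$.

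Finally I would globalize. The function $\tr(K^2)$ is smooth on all of $M$ and, by the previous paragraph, locally constant on the dense open set $U$; its differential therefore vanishes on $\overline{U}=M$, so $\tr(K^2)$ is a global constant since $M$ is connected. This constant is non-zero, for otherwise $K\equiv0$; but $\tr(K^2)=0$ at every point of $M\setminus U$, whence $M\setminus U=\emptyset$. Thus $K$ has exactly two distinct eigenvalues everywhere, $E_1$ and $E_2$ are globally defined smooth distributions with $TM=E_1\oplus E_2$, the numbers $n_1,n_2$ and $c$ are genuine constants by connectedness, and $K=c\,(n_2\pi_1-n_1\pi_2)$ on $M$, as claimed. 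I expect the main obstacle to be the middle paragraph: finding precisely the right combination of the Killing equation, its divergence-free consequence, and a polarize-and-trace maneuver so that the second-fundamental-form contributions of the eigendistributions cancel and only the normal derivatives of the eigenvalues survive. The density/unique-continuation input and the concluding $\tr(K^2)$ argument are comparatively routine.
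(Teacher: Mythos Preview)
Your argument is correct, but the middle paragraph works much harder than necessary. After your Step~1 (which coincides with the paper's approach) you already know that $\lambda_1$ is constant along $E_1$ and $\lambda_2$ is constant along $E_2$. At this point the paper simply invokes the trace relation $n_1\lambda_1+n_2\lambda_2=0$: since the ranks $n_1,n_2$ are locally constant integers on $U$, one has $\lambda_2=-\tfrac{n_1}{n_2}\lambda_1$, hence $\lambda_2$ is constant along $E_1$ as well; combined with Step~1 this gives $d\lambda_2=0$ (and then $d\lambda_1=0$) immediately. Your Steps~2 and~3 --- bringing in $\delta K=0$ from Corollary~\ref{tracefree}, extracting the minimality of the eigendistributions, and then polarizing and tracing --- are all valid, but they are not needed for the constancy of the eigenvalues. (They do, incidentally, recover the geometric content of Proposition~\ref{distributions}, so the detour is not without interest.)

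On the other hand, your globalization is more explicit than the paper's. The paper passes in one sentence from ``locally constant on a neighbourhood of each point where $K\neq 0$'' to ``constant on $M$'', whereas you spell out the density of $U$ via unique continuation (Remark~\ref{ft}) and then use the smooth invariant $\tr(K^2)$ to rule out the vanishing locus entirely. That part of your write-up is an improvement.
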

\proof
Since $K$ is trace-free, the eigenvalues of $K$ are distinct at every point $p\in M$ where $K_p\ne 0$. Every such point $p$ has a neighborhood $U$ on which the multiplicities of the eigenvalues of $K$ are constant. The eigenspaces of $K$ define two orthogonal distributions $E_1$ and $E_2$ along $U$ such that $\T M|_U=E_1\oplus E_2$. 
Then the Killing tensor $K$ can be written as
$
K = f\pi_1 + h\pi_2
$.   Since $K$ is trace-free, we have
$
0 = n_1 f + n_2 h
$.
The covariant derivative of $K$ can be written as
\bea
g( (\nabla_XK)Y, Z)
&=&
g( \nabla_X(KY) - K(\nabla_XY), Z)   \\[1ex]
&=&
g( X(f) \pi_1(Y) + f (\nabla_X\pi_1)Y + X(h) \pi_2(Y) + h(\nabla_X\pi_2)Y, Z  )\ .
\eea
Note that for any vector  $X$ and vector fields $X_i, Y_i \in E_i$ for $i=1,2$ we have
\begin{equation}\label{zero}
g( (\nabla_X\pi_1) X_i, Y_i) \;=\; 0 \ 
\end{equation}
and similarly for $\pi_2$.
For  $X\in E_1$, the Killing tensor equation gives $(\nabla_XK)(X,X) = 0$ and it follows
from the formula above that
$
g( X(f) \pi_1(X), X) = 0
$.
Thus $X(f)=0$ for all $X \in E_1$ and similarly $X(h)=0$ for all $X\in E_2$. It follows
that $f$ and $h$ are constant on $U$, since  $f$ and $h$ are related via $n_1 f + n_2 h = 0$. The eigenvalues of $K$ are thus constant on $U$. Since this is true on some neighbourhood of every point $p$ where $K_p\ne 0$, we deduce that the eigenvalues of $K$, and their multiplicities, are constant on $M$. This proves the lemma.
\qed

We will now characterize orthogonal splittings of the tangent bundle which lead to trace-free Killing tensors.

\begin{epr} \label{distributions}
Let $E_1$ and $E_2$ be orthogonal complementary distributions on $M$ of dimensions $n_1$ and $n_2$ respectively. Then the trace-free symmetric tensor $K = n_2\pi_1-n_1\pi_2$ is Killing if and only
if the following conditions hold:
 \begin{equation}\label{d1}
\nabla_{X_1}X_1 \in \Gamma(E_1) \qquad \forall \; X_1\in \Gamma(E_1)
\qquad \mbox{and} \qquad
\nabla_{X_2}X_2 \in \Gamma(E_2) \qquad \forall \; X_2\in \Gamma(E_2) \ .
\end{equation}
\end{epr}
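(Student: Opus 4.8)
The plan is to reduce the Killing equation for $K=n_2\pi_1-n_1\pi_2$ to a statement about the covariant derivatives of the projections $\pi_1,\pi_2$, and then to a statement about the distributions $E_1,E_2$. First I would write $\pi_2=\Id-\pi_1$, so that $K=n_2\pi_1-n_1(\Id-\pi_1)=(n_1+n_2)\pi_1-n_1\Id=n\pi_1-n_1\Id$. Since the term $-n_1\Id$ is parallel, $K$ is Killing if and only if $\pi_1$ is (i.e.\ $(\nabla_X\pi_1)(X,X)=0$ for all $X$, equivalently $g((\nabla_X\pi_1)X,X)=0$). So the whole problem reduces to: $\pi_1$ is a Killing tensor $\iff$ \eqref{d1} holds.

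The key computational input is formula \eqref{zero} from the preceding lemma, namely $g((\nabla_X\pi_1)X_i,Y_i)=0$ for $X_i,Y_i$ in the same distribution $E_i$; this follows from differentiating $g(\pi_1 X_i,Y_i)$ against the constancy of $\pi_1$ on each $E_i$ (on $E_1$ it is $\Id$, on $E_2$ it is $0$). The second input is the elementary identity $(\nabla_X\pi_1)=-(\nabla_X\pi_2)$ and the fact that $(\nabla_X\pi_1)$ maps $E_1$ into $E_2$ and $E_2$ into $E_1$ (again by differentiating $\pi_1^2=\pi_1$, or $\pi_1\pi_2=0$). Using these, for a general tangent vector $X=X_1+X_2$ with $X_i\in E_i$, I would expand $g((\nabla_X\pi_1)X,X)$ and find that all "diagonal'' contributions vanish by \eqref{zero}, leaving only cross terms. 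The surviving terms should combine, after using $g((\nabla_X\pi_1)Y,Z)=g(Y,(\nabla_X\pi_1)Z)$ is \emph{false} in general — rather $(\nabla_X\pi_1)$ is symmetric because $\pi_1$ is — into an expression like $2g((\nabla_{X_1}\pi_1)X_2,X_1)+2g((\nabla_{X_2}\pi_1)X_1,X_2)$, or something of this shape.

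The final step is to interpret these surviving cross terms geometrically. One has $g((\nabla_{X_1}\pi_1)X_2,X_1)=-g(\pi_1(\nabla_{X_1}X_2),X_1)=-g(\nabla_{X_1}X_2,X_1)=g(X_2,\nabla_{X_1}X_1)=g(X_2,\pi_2(\nabla_{X_1}X_1))$, where I used $\pi_1 X_2=0$ and $0=X_1(g(X_2,X_1))$-type identities; the point is that $g((\nabla_{X_1}\pi_1)X_2,X_1)$ equals (up to sign) the $E_2$-component of $\nabla_{X_1}X_1$ paired with $X_2$. Similarly the other term encodes the $E_1$-component of $\nabla_{X_2}X_2$. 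Hence $g((\nabla_X\pi_1)X,X)=0$ for all $X$ is equivalent to $\pi_2(\nabla_{X_1}X_1)=0$ for all $X_1\in\Gamma(E_1)$ and $\pi_1(\nabla_{X_2}X_2)=0$ for all $X_2\in\Gamma(E_2)$ — which is precisely \eqref{d1}. (To pass from "for $X=X_1+X_2$'' to "for all $X_1,X_2$ separately'' one polarizes, exploiting that the expression is a cubic form in $X$ whose vanishing on all $X$ forces vanishing of each multilinear component.)

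I expect the main obstacle to be purely bookkeeping: correctly tracking which mixed terms survive \eqref{zero} and getting the symmetrization/polarization right, so that the quadratic-in-$X$ condition $(\nabla_X K)(X,X)=0$ unpacks cleanly into the two separate conditions on $E_1$ and $E_2$ rather than just their sum. There is no deep difficulty — the only subtlety is making sure that no cross term of the form "$E_1$-derivative hitting an $E_1$-vector paired with an $E_2$-vector'' is silently dropped, since \eqref{zero} only kills terms where the derivative output is paired \emph{within} the same distribution. Once the algebra is organized, both implications fall out simultaneously from the same identity.
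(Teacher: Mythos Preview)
Your proposal is correct and follows essentially the same route as the paper: reduce to $\pi_1$ being Killing via $K=n\pi_1-n_1\Id$, use \eqref{zero} to kill the ``diagonal'' terms, and identify the surviving cross terms with the $E_2$-component of $\nabla_{X_1}X_1$ and the $E_1$-component of $\nabla_{X_2}X_2$. The only minor difference is in the forward implication: rather than computing $g((\nabla_X\pi_1)X,X)$ for $X=X_1+X_2$ and then separating the two cross terms by bi-degree as you propose, the paper plugs $(X_1,X_1,X_2)$ directly into the polarized Killing equation $\dd\pi_1=0$, which isolates $g(\nabla_{X_1}X_1,X_2)$ in one step without the homogeneity argument.
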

\proof
Assume first that $K =n_2\pi_1-n_1\pi_2$ is a Killing tensor. Since $\pi_1 + \pi_2 = \Id$ is parallel, we see
that $\pi_1$ and $\pi_2$ are Killing tensors too. Let $X_1 \in \Gamma(E_1)$ and $X_2 \in \Gamma(E_2)$. As
$\pi_1$ is a Killing tensor, we get from (\ref{zero}):
\begin{eqnarray*}
0 &=& g( (\nabla_{X_1}\pi_1) X_1, X_2) + g( (\nabla_{X_1}\pi_1) X_2, X_1)+g( (\nabla_{X_2}\pi_1) X_1, X_1)\\&=&2g( (\nabla_{X_1}\pi_1) X_1, X_2)\\
&=&2g( \nabla_{X_1} X_1, X_2)\ , 
\end{eqnarray*}
and similarly $0=g( \nabla_{X_2} X_2, X_1)$, thus proving \eqref{d1}. 

Conversely, if \eqref{d1} holds, then for every vector field $X$ on $M$ we can write $X=X_1+X_2$, where $X_i:=\pi_i(X)$ for $i=1,2$ and compute using  (\ref{zero}) again:
\begin{eqnarray*}g((\nabla_X\pi_1)X,X)&=&2g((\nabla_X\pi_1)X_1,X_2)=2g(\nabla_XX_1,X_2)\\
&=&2g(\nabla_{X_1}X_1,X_2)+2g(\nabla_{X_2}X_1,X_2)\\
&=&2g(\nabla_{X_1}X_1,X_2)-2g(\nabla_{X_2}X_2,X_1)=0\ .
\end{eqnarray*}
\qed

Pairs of 
distributions with this property were studied in \cite{naveira} by A. Naveira under the name of almost
product structures of type $D_1$.
Note that Killing tensors with two eigenvalues were intensively studied by W. Jelonek in \cite{jelonek99},
\cite{jelonek09}, \cite{jelonek13}
and also by B. Coll et al.\ in \cite{coll}. In particular W. Jelonek proves that a Killing tensor with constant eigenvalues
satisfies our condition \eqref{d1}. Thus our Proposition \ref{distributions} is in some sense a converse to his result.

\begin{exe}
If $M\to N$ is a Riemannian submersion with totally geodesic fibers and $\mathcal{V}$ and $\mathcal{H}$ denote its vertical and horizontal distributions, then $E_1:=\mathcal{V}$ and $E_2:=\mathcal{H}$ satisfy \eqref{d1}, by the O'Neill formulas. It turns out that this generalizes several examples of Killing tensors appearing in the physics literature, e.g. in \cite{gibbons03}.
\end{exe}

\begin{ere}
Note that \eqref{d1} does not imply the integrability of $E_1$ or $E_2$. However, assuming that \eqref{d1} holds and that one of the distributions, say $E_1$, is integrable, then there exists locally a Riemannian submersion with totally geodesic fibers whose vertical and horizontal distributions are $E_1$ and $E_2$ respectively.
\end{ere}


\section{Conformal Killing tensors on hypersurfaces}

In this last section we give a short proof, using the formalism developed above, of a vanishing result of  Dairbekov and Sharafutdinov:

\begin{ath}[\cite{dairbekov}]\label{nodal}
Let $(M^n, g)$ be a connected Riemannian manifold and let $H \subset M$ be a hypersurface. If a trace-free conformal
Killing tensor $K$ vanishes along $H$, then $K$ vanishes identically on $M$.
\end{ath}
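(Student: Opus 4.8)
The strategy is to reduce the statement, via the finite-type property of the conformal Killing equation, to the vanishing of all covariant derivatives of $K$ at a single point of $H$, and to deduce the latter by differentiating the conformal Killing equation in the direction normal to $H$. Fix $p_0\in H$, denote by $p$ the degree of $K$, and choose a local unit normal field $N$ to $H$ near $p_0$. The normal exponential map identifies a neighbourhood of $p_0$ with a tube over $H$, with coordinate $t$ the signed distance to $H$, so that $N=\nabla t$ is a geodesic field, $\nabla_N N=0$, and $N^\perp$ is tangent to the hypersurfaces $\{t=\mathrm{const}\}$. Complete $N$ to a local orthonormal frame $(e_1,\dots,e_{n-1},N)$ by parallel-transporting an orthonormal frame of $\T H$ along the $N$-geodesics, so that $\nabla_N e_a=0$ and $e_a\in N^\perp$ for all $a$. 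Since $K$ vanishes along $H$, all its covariant derivatives in directions tangent to $H$ vanish at points of $H$; in particular, using this frame, $\dd K|_H=N\cdot\nabla_N K|_H$ and $\delta K|_H=-N\lrcorner\nabla_N K|_H$, the contributions of the $e_a$ being zero.

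The algebraic core is the following: \emph{if $L\in\Sym^p_0 V$ and $(\nu\cdot L)_0=0$ for a unit vector $\nu$, then $L=0$.} Indeed, by \eqref{projection} the hypothesis reads $\nu\cdot L=\tfrac1{n+2p-2}\LL(\nu\lrcorner L)$. Pairing this identity with $\nu\cdot L$ and using, on the one side, the adjointness of $\LL$ and $\Lambda$ together with $\Lambda(\nu\cdot L)=2\,\nu\lrcorner L$ (from \eqref{commu2} and $\Lambda L=0$), and on the other side the adjointness of $\nu\cdot$ and $\nu\lrcorner$ together with the elementary relation $\nu\lrcorner(\nu\cdot L)=L+\nu\cdot(\nu\lrcorner L)$, yields
$$\tfrac2{n+2p-2}\,\|\nu\lrcorner L\|^2\;=\;\|\nu\cdot L\|^2\;=\;\|L\|^2+\|\nu\lrcorner L\|^2\ ,$$
so that $\|L\|^2=(\tfrac2{n+2p-2}-1)\|\nu\lrcorner L\|^2\le0$ because $n\ge2$, whence $L=0$.

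We now show by induction on $m\ge0$ that $\nabla^i K$ vanishes along $H$ for all $i\le m$; the case $m=0$ is the hypothesis. Assume the statement for $m$. A bookkeeping argument — most transparent in Fermi coordinates adapted to $H$, where it boils down to the observation that $\partial_t^m K$ vanishes on $H$ and hence so do its tangential derivatives — shows that along $H$ the only component of $\nabla^{m+1}K$ not forced to vanish by the induction hypothesis is $L:=\nabla_N^{m+1}K|_H$; equivalently $\nabla_N^m\nabla_{e_a}K|_H=0$ for each $a$. Applying $\nabla_N^m$ to the conformal Killing equation $\dd K=-\tfrac1{n+2p-2}\LL\,\delta K$ (Lemma \ref{conf}), and using $\nabla_N N=0$, $\nabla_N e_a=0$ and the parallelism of $\LL$, we obtain along $H$
$$N\cdot L\;=\;\tfrac1{n+2p-2}\,\LL(N\lrcorner L)\ ,$$
i.e. $(N\cdot L)_0=0$; by the algebraic core $L=0$, completing the induction. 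Consequently all covariant derivatives of $K$ vanish at $p_0$, in particular its first $2p$ of them; since the defining equation of trace-free conformal Killing $p$-tensors is of finite type (Remark \ref{ft}), $K$ vanishes identically on the connected manifold $M$.

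The one step requiring care is the assertion, used in the induction, that along $H$ the only uncontrolled component of $\nabla^{m+1}K$ is the purely normal derivative $\nabla_N^{m+1}K|_H$. Invariantly this amounts to commuting $\nabla_N$ past the tangential derivatives $\nabla_{e_a}$, which produces curvature terms $R_{N,e_a}(\cdot)$ and terms $\nabla_{[N,e_a]}K$ with $[N,e_a]=-\nabla_{e_a}N\in N^\perp$, and then checking that iterating these operations and differentiating further along $N$ yields only contractions of derivatives of the curvature with $\nabla_N^i K$ for $i\le m$, all of which vanish on $H$ by the induction hypothesis together with $K|_H=0$; carrying this out directly in Fermi coordinates makes it routine.
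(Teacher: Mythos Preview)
Your proof is correct and follows the same strategy as the paper's: show by induction that all covariant derivatives of $K$ vanish along $H$, reducing at each step to the pointwise algebraic fact that $(N\cdot L)_0=0$ forces a trace-free $L$ to vanish, and conclude via the finite-type property. The implementation differs only in details---you handle the commutation of $\nabla_N^m$ with $\dd,\delta$ via an adapted Fermi frame (where the paper instead extends $\dd,\delta$ to sections of $\T M^{\otimes l}\otimes\Sym^p_0\T M$ and proves the invariant recursion~\eqref{dkl}), and you prove the algebraic lemma by a cleaner norm inequality (where the paper compares coefficients of powers of $N$).
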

\proof
Let $K \in \Gamma(\Sym^p_0 \T M) $ be a trace-free conformal Killing tensor vanishing along a 
hypersurface $H \subset M$.
Starting with $K_0:=K$ we recursively define tensors $K_{l} := \nabla K_{l-1}$, which are
sections of $\T M^{\otimes \, l} \otimes \Sym^p_0\T M$. We claim that all tensors $K_i$
vanish along $H$. Since the conformal Killing equation is of finite type this will imply that $K$
is identically zero on $M$. 

Consider the natural extensions $\dd:\Gamma(\T M^{\otimes \, l} \otimes \Sym^p\T M)\to \Gamma(\T M^{\otimes \, l} \otimes \Sym^{p+1}\T M)$, $\delta:\Gamma(\T M^{\otimes \, l} \otimes \Sym^p\T M)\to \Gamma(\T M^{\otimes \, l} \otimes \Sym^{p-1}\T M)$ and $\nabla:\Gamma(\T M^{\otimes \, l} \otimes \Sym^p\T M)\to \Gamma(\T M^{\otimes \, (l+1)} \otimes \Sym^{p}\T M)$ of $\dd,\ \d$ and $\nabla$, defined on decomposable tensors by
$$\dd(T\otimes K):=\sum_ie_i\cdot\nabla_{e_i}(T\otimes K)=T\otimes \dd K+\sum_i\nabla_{e_i}T\otimes e_i\cdot K\ ,$$
$$\delta(T\otimes K):=-\sum_ie_i\lrcorner\nabla_{e_i}(T\otimes K)=T\otimes \delta K-\sum_i\nabla_{e_i}T\otimes e_i\lrcorner K\ ,$$
$$\nabla(T\otimes K):=\nabla T\otimes K+\sum_i (e_i\otimes T)\otimes \nabla_{e_i} K\ ,$$
where $\{e_i\}$ denotes as usual a local orthonormal basis of $\T M$. A straightforward computation shows that 
\begin{equation}\label{dnabla}
[\dd,\nabla]={\mathcal R}^+,\qquad [\delta,\nabla]=\mathcal{R}^-,
\end{equation}
where ${\mathcal R}^+:\T M^{\otimes \, l} \otimes \Sym^p\T M\to \T M^{\otimes \, (l+1)} \otimes \Sym^{p+1}\T M$ is defined by
$${\mathcal R}^+(T\otimes K):=\sum_{i,j}(e_i\otimes T)\otimes (e_j\cdot R_{e_j,e_i}K)+(e_i\otimes R_{e_j,e_i}T)\otimes (e_j\cdot K)\ ,$$
and ${\mathcal R}^-:\T M^{\otimes \, l} \otimes \Sym^p\T M\to \T M^{\otimes \, (l+1)} \otimes \Sym^{p-1}\T M$ is defined by
$${\mathcal R}^-(T\otimes K):=-\sum_{i,j}(e_i\otimes T)\otimes (e_j\lrcorner R_{e_j,e_i}K)+(e_i\otimes R_{e_j,e_i}T)\otimes (e_j\lrcorner K)\ .$$
Since $K$ is trace-free conformal Killing, Lemma \ref{conf} shows that 
\begin{equation}\label{ddk}\dd  K_0 \;=\; -\, \tfrac{1}{n+2p-2}\,\LL  \, \d K_0\ .
\end{equation}
(Note that since $K$ is trace-free, the notation $K_0$ from Lemma \ref{conf} coincides with our notation $K=K_0$ above).
We will prove by induction that there exist vector bundle morphisms $F_{i,l}:\T M^{\otimes \, i} \otimes \Sym^p\T M\to \T M^{\otimes \, l} \otimes \Sym^{p+1}\T M$ such that 
\begin{equation}\label{dkl}\dd  K_l \;=\; -\, \tfrac{1}{n+2p-2}\,\LL  \, \d K_l+\sum_{i=0}^{l-1}F_{i,l}(K_i)\ ,
\end{equation}
where here $\LL :\T M^{\otimes \, i} \otimes \Sym^p\T M\to \T M^{\otimes \, i} \otimes \Sym^{p+2}\T M$ denotes the natural extension of $\LL $, which of course commutes with $\nabla$.
For $l=0$, this is just \eqref{ddk}. Assuming that the relation holds for some $l\ge 0$ we get from \eqref{dnabla}
\bea\dd  K_{l+1} &=& \dd\nabla K_l=\nabla\dd K_l+\mathcal{R}^+K_l\\
&=&\nabla\left(-\tfrac{1}{n+2p-2}\,\LL  \, \d K_l+\sum_{i=0}^{l-1}F_{i,l}(K_i)\right)+\mathcal{R}^+K_l\\
&=&-\tfrac{1}{n+2p-2}\,\LL  \, \nabla\d K_l+\sum_{i=0}^{l-1}\left((\nabla F_{i,l})(K_i)+(\id\otimes  F_{i,l})(K_{i+1})\right)+\mathcal{R}^+K_l\\
&=&-\tfrac{1}{n+2p-2}\left(\,\LL  \,\d K_{l+1}+\LL  \,\mathcal{R}^-K_l\right)+\sum_{i=0}^{l-1}\left((\nabla F_{i,l})(K_i)+(\id\otimes  F_{i,l})(K_{i+1})\right)+\mathcal{R}^+K_l\ ,\eea
which is just \eqref{dkl} for $l$ replaced by $l+1$ and
$$F_{i,l+1}:=\begin{cases}\nabla F_{i,l}+(\id\otimes F_{i-1,l}),& i\le l-1\\ 
-\tfrac{1}{n+2p-2}\,\LL  \,\mathcal{R}^-+(\id\otimes F_{l-1,l})+\mathcal{R}^+,&i=l\ .
\end{cases}
$$
This proves \eqref{dkl} for all $l$. 

Assume now that $K_0, \ldots, K_{l}$ vanish along $H$ for some $l\ge 0$. We claim
that $K_{l+1}$ is also vanishing along $H$. Take any point $x\in H$ and choose a local orthonormal frame $\{e_i\}$ such that $e_1=:N$ is normal to $H$ and $e_2,\ldots,e_n$ are tangent to $H$ at $x$. From \eqref{dkl} we have $\dd  K_l \;=\; -\, \tfrac{1}{n+2p-2}\,\LL  \, \d K_l$ at $x$. Moreover, $\nabla_{e_i}K_l$ vanishes at $x$ for every $i\ge 2$. The previous relation thus reads 
\begin{equation}\label{nn}N\cdot\nabla_N K_l \;=\; \tfrac{1}{n+2p-2}\,\LL  \, N\lrcorner \nabla_N K_l\ .\end{equation}
Writing 
$$\nabla_N K_l=\sum_{I\in\{1,\ldots,n\}^l}e_I\otimes S_I\ ,$$
with $e_I:=e_{i_1}\otimes \ldots\otimes e_{i_l}$, and $S_I\in \Sym^p\T M$,
\eqref{nn} becomes 
\begin{equation}\label{nn1}N\cdot S_I=\tfrac{1}{n+2p-2}\,\LL  \, N\lrcorner S_I\end{equation}
for every $I$. 
It is easy to check that this implies $S_I=0$ for every $I$. Indeed, if $i_0\in\{0,\ldots,p\}$ denotes the largest index $i$ such that the coefficient $C_i$ of $N^i$ in $S_I$ is non-zero, comparing the coefficients of $N^{i_0+1}$ in \eqref{nn1} yields $C_{i_0}=\tfrac{1}{n+2p-2}i_0C_{i_0}$, which is clearly impossible for $n>2$. This shows that $\nabla_N K_l=0$ at $x$, and since we already noticed that $\nabla_{e_i}K_l$ vanishes at $x$ for every $i\ge 2$, we have $K_{l+1}=0$ at $x$. As this holds for every $x\in H$, our claim is proved. 

Consequently, if $K$ vanishes along $H$, then all covariant derivatives of $K$ vanish along $H$. Since the conformal Killing equation has finite type (cf. Remark \ref{ft}), this implies that $K$ vanishes identically on $M$ (being a component of a parallel section of some vector bundle on $M$ which vanishes along $H$). This proves the theorem.
\qed

\medskip

\end{document}